\documentclass[11pt]{article}

% Mathematics and theorem environments
\usepackage{mathtools}
\usepackage{amssymb}
\usepackage{amsthm}
\mathtoolsset{showonlyrefs}
\usepackage[margin=1in]{geometry}

% Figures and tables
\usepackage{graphicx}
\graphicspath{{./art/}}
\usepackage{subcaption}
\usepackage{multirow}
\usepackage{adjustbox}
\usepackage{booktabs}
\usepackage{array}
\usepackage{arydshln}
\setlength\dashlinedash{0.2pt}
\setlength\dashlinegap{1.5pt}
\setlength\arrayrulewidth{0.3pt}

% Algorithms and lists
\usepackage[plain,noend]{algorithm2e}
\usepackage{enumitem}

% Authors and bibliography
\usepackage{authblk}
\usepackage[authoryear,round]{natbib}

% Miscellaneous
\usepackage{xcolor}

% User-defined commands

% ad hoc commands

\newcommand{\colsp}{\mathrm{colsp}}
\newcommand{\rowsp}{\mathrm{rowsp}}
\newcommand{\vspan}{\mathrm{span}}

\newcommand{\cV}{\mathcal{V}}

% mathbb 
\newcommand{\NN}{\mathbb{N}}
\newcommand{\RR}{\mathbb{R}}
\newcommand{\Rb}{\mathbb{R}}
\newcommand{\bone}{\boldsymbol{\mathbf{1}}}
\newcommand{\bbE}{\mathbb{E}}

% boldsymbol 

\newcommand{\bc}{\boldsymbol{c}}
\newcommand{\bd}{\boldsymbol{d}}
\newcommand{\be}{\boldsymbol{e}}

\newcommand{\bh}{\boldsymbol{h}}

\newcommand{\bk}{\boldsymbol{k}}

\newcommand{\bv}{\boldsymbol{v}}
\newcommand{\bw}{\boldsymbol{w}} 
\newcommand{\bx}{\boldsymbol{x}}
\newcommand{\by}{\boldsymbol{y}} 
 
\newcommand{\bA}{\boldsymbol{A}}
\newcommand{\bB}{\boldsymbol{B}}
\newcommand{\bC}{\boldsymbol{C}}
\newcommand{\bD}{\boldsymbol{D}}

\newcommand{\bG}{\boldsymbol{G}}
\newcommand{\bH}{\boldsymbol{H}}
\newcommand{\bI}{\boldsymbol{I}}

\newcommand{\bL}{\boldsymbol{L}}
\newcommand{\bM}{\boldsymbol{M}}
\newcommand{\bN}{\boldsymbol{N}}

\newcommand{\bP}{\boldsymbol{P}}
\newcommand{\bQ}{\boldsymbol{Q}}

\newcommand{\bS}{\boldsymbol{S}}
\newcommand{\bT}{\boldsymbol{T}}
\newcommand{\bU}{\boldsymbol{U}}
\newcommand{\bV}{\boldsymbol{V}}
\newcommand{\bW}{\boldsymbol{W}} 
\newcommand{\bX}{\boldsymbol{X}}

\newcommand{\bSigma}{\boldsymbol{\Sigma}} 
\newcommand{\bPi}{\boldsymbol{\Pi}}
\newcommand{\balpha}{\boldsymbol{\alpha}} 
\newcommand{\bbeta}{\boldsymbol{\beta}} 
\newcommand{\bvarepsilon}{\boldsymbol{\varepsilon}}

\newcommand{\bnu}{\boldsymbol{\nu}} 
\newcommand{\bzero}{\boldsymbol{0}} 
 
\newcommand{\bgamma}{\boldsymbol{\gamma}} 
\newcommand{\bOmega}{\boldsymbol{\Omega}} 
 
\newcommand{\bDelta}{\boldsymbol{\Delta}} 
 
\newcommand{\Gram}{\bG}
\newcommand{\bbetaopt}{\bbeta^{\mathrm{opt}}}
\newcommand{\bnuopt}{\bnu^{\mathrm{opt}}}

% mathcal 
\newcommand{\cI}{\mathcal{I}}
\newcommand{\cJ}{\mathcal{J}}
\newcommand{\cL}{\mathcal{L}} 
\newcommand{\cN}{\mathcal{N}} 

\newcommand{\cR}{\mathcal{R}}
\newcommand{\cS}{\mathcal{S}} 

\newcommand{\cX}{\mathcal{X}}

% hat 
\newcommand{\halpha}{\widehat{\alpha}} 
\newcommand{\hbeta}{\widehat{\beta}} 
\newcommand{\hDelta}{\widehat{\Delta}}

\newcommand{\hbbeta}{\widehat{\bbeta}} 
\newcommand{\hq}{\widehat{q}} 
\newcommand{\hy}{\widehat{y}}

\newcommand{\bhV}{\widehat{\bV}} 
\newcommand{\hvarepsilon}{\widehat{\varepsilon}} 
\newcommand{\hbvarepsilon}{\widehat{\bvarepsilon}} 
 
\newcommand{\hsigma}{\widehat{\sigma}} 

\newcommand{\hbalpha}{\widehat{\balpha}} 
 
\newcommand{\hbDelta}{\widehat{\bDelta}}

% \bar

% \tilde

\newcommand{\tbM}{\widetilde{\bM}}
\newcommand{\tbS}{\widetilde{\bS}}
\newcommand{\tbX}{\widetilde{\bX}}
\newcommand{\tbbeta}{\widetilde{\bbeta}}
\newcommand{\tbvarepsilon}{\widetilde{\bvarepsilon}} 
\newcommand{\tvarepsilon}{\widetilde{\varepsilon}}

\newcommand{\tDelta}{\widetilde{\Delta}} 
\newcommand{\tbDelta}{\widetilde{\bDelta}} 

% text
\newcommand{\diag}{\mathrm{diag}} 
\newcommand{\rank}{\mathrm{rank}\,}

\newcommand{\tr}{\text{\normalfont tr}\,}

\newcommand{\Cov}{\text{\normalfont Cov}} 
\newcommand{\nCov}{\text{\normalfont Cov}} 
\newcommand{\olsmin}{{\normalfont\texttt{OLS}}}
\newcommand{\si}{\sim i}
\newcommand{\J}{\textrm{Jack}}
\newcommand{\F}{\textup{F}}
\newcommand{\uL}{\textup{L}}

\newcommand{\PRESS}{\textsf{PRESS}} 
\newcommand{\jackknife}{\textsf{jackknife}}
\newcommand{\jackknifep}{\textsf{jackknife+}}

\newcommand{\PI}{\widehat{\mathcal{PI}}}

\newcommand{\Normal}{\mathsf{N}}

\DeclareMathOperator*{\argmin}{\arg \min}

% \newtheorem{example}{Example}
%\newtheorem{theorem}{Theorem}
%\newtheorem{proposition}{Proposition}
%\newtheorem{lemma}{Lemma}
%\newtheorem{definition}{Definition}
%\newtheorem{problem}{Problem}
%\newtheorem{assumption}{Assumption}
%\newtheorem{conjecture}{Conjecture}
%\newtheorem{remark}{Remark}
%\newtheorem{corollary}{Corollary}

% Hyperref should normally be loaded near the end of the preamble.
\usepackage[
  colorlinks=true,
  citecolor=magenta,
  linkcolor=blue,
  urlcolor=blue
]{hyperref}

% The Biometrika class may previously have supplied theorem environments.
% These conditional definitions are used only when macros.tex has not
% already defined the corresponding environment.
\makeatletter
\@ifundefined{theorem}{\newtheorem{theorem}{Theorem}[section]}{}
\@ifundefined{proposition}{\newtheorem{proposition}[theorem]{Proposition}}{}
\@ifundefined{lemma}{\newtheorem{lemma}[theorem]{Lemma}}{}
\@ifundefined{corollary}{\newtheorem{corollary}[theorem]{Corollary}}{}
\theoremstyle{definition}
\@ifundefined{definition}{\newtheorem{definition}[theorem]{Definition}}{}
\@ifundefined{assumption}{\newtheorem{assumption}[theorem]{Assumption}}{}
\@ifundefined{example}{\newtheorem{example}[theorem]{Example}}{}
\theoremstyle{remark}
\@ifundefined{remark}{\newtheorem{remark}[theorem]{Remark}}{}
\makeatother

\providecommand{\keywords}[1]{%
  \par\medskip\noindent\textbf{Keywords: }#1%
}

\title{Benign Overfitting Beyond Prediction:\\
The Ordinary Least Squares Interpolator}

\author[1]{Dennis Shen}
\author[2]{Dogyoon Song}
\author[3]{Peng Ding}
\author[4]{Jasjeet Sekhon}

\affil[1]{Department of Data Sciences \& Operations, University of Southern California,\\
Los Angeles, CA 90089, USA; \texttt{dennis.shen@marshall.usc.edu}}
\affil[2]{Department of Statistics, University of California, Davis,\\
Davis, CA 95616, USA; \texttt{dgsong@ucdavis.edu}}
\affil[3]{Department of Statistics, University of California, Berkeley,\\
Berkeley, CA 94720, USA; \texttt{pengdingpku@berkeley.edu}}
\affil[4]{Google DeepMind, Mountain View, CA 94043, USA; \texttt{jas@jsekhon.com}}

\date{}

\begin{document}

\maketitle

\begin{abstract}

Recent advances in deep learning have highlighted the phenomenon of benign overfitting in overparameterized statistical models, sparking significant interest in understanding its foundations. Owing to its simplicity and practical relevance, the ordinary least squares (OLS) interpolator has become a key object of study for gaining theoretical insight into this phenomenon. While the properties of OLS are well understood in classical underparameterized settings, its behavior in the overparameterized regime---unlike that of ridge regression or the lasso---remains comparatively less explored. We contribute to this growing literature by deriving new algebraic and statistical results for the minimum $\ell_2$-norm OLS interpolator. In contrast to much of the existing work, which focuses on prediction risk, we center our analysis on parameter estimation and inference, which are fundamental for many statistics and causal inference applications. Specifically, we establish overparameterized analogues of (i) the leave-$k$-out formulas, (ii) the omitted variable bias formula, and (iii) the Frisch–Waugh–Lovell theorem. Under the Gauss–Markov model, we further extend the Gauss–Markov theorem and analyze variance estimation under homoskedasticity in the overparameterized setting. Collectively, these results provide a systematic framework for studying parameter estimation and inference in overparameterized linear models, offering a novel perspective on benign overfitting beyond its implications for prediction.
\end{abstract}

\keywords{Frisch--Waugh--Lovell theorem; Gauss--Markov theorem;
leave-one-out; omitted-variable bias.}

\section{Introduction} \label{sec:intro}
\subsection{Recent interest in the Ordinary Least Squares Interpolator}
The study of overparameterized statistical models in deep learning has unveiled a striking phenomenon known as benign overfitting \citep{zhang2017understanding, belkin_2021}, whereby models that perfectly fit noisy training data can nevertheless generalize well to unseen data.  
This discovery has ignited considerable theoretical interest in statistics and machine learning. 
At the center of this discussion lies the ordinary least squares (OLS) interpolator. 

%The OLS estimator is a foundational tool valued for its simplicity and wide applicability across diverse domains, serving as a critical instrument for both theoretical analysis and practical implementation. 
The OLS estimator is a foundational tool whose simplicity and broad applicability have made it central to both theoretical analysis and practical implementation across a wide range of domains.
Given a set of covariates $\bX \in \Rb^{n \times p}$ and responses $\by \in \Rb^n$, the OLS estimator is obtained by regressing $\by$ on $\bX$. 
The minimum $\ell_2$-norm OLS estimator is defined as $\hbbeta = \bX^{\dagger} \by$, where $\bX^{\dagger}$ is the Moore-Penrose pseudoinverse of $\bX$. 
This estimator is a solution to the problem, $\hbbeta \in \argmin_{\bbeta \in \RR^p} \| \by - \bX \bbeta \|_2^2$, and is called the OLS interpolator when $\by = \bX \hbbeta$.  
Notably, this estimator arises both as the limit of ridge regression as the regularization parameter tends to zero and as the limit of gradient flow, initialized at zero, applied to the least squares loss \citep{hastie_22}. 

The properties of OLS are well understood in the classical underparameterized regime ($n > p$), where notions such as bias, consistency, and inference are well established. 
By contrast, its behavior in high-dimensional, overparameterized settings ($p > n$)---a hallmark of modern datasets---remains comparatively underexplored relative to its explicitly regularized counterparts such as the lasso and ridge regression. 
Recent works, however, have advanced our comprehension of the prediction performance of OLS in this regime under specific stochastic assumptions on the data-generating process \citep{bartlett2020benign, belkin20, hastie_22}. 
%
%{\color{red}
%\cite{montanari_aos} extended this line of inquiry to classification, showing that benign overfitting can persist in linearly separable settings.
%}

Yet, in many statistics applications, the central focus is not with prediction, but with parameter estimation and inference. 
This is particularly commonplace in causal inference, where OLS often plays a pivotal role: under suitable identification assumptions, components of $\hbbeta$ acquire causal interpretation. 
In such settings, the validity of estimation and inference rest on classical OLS properties that hold only when $n > p$. 
When $p > n$, several challenges emerge: 
(i) infinitely many solutions exist, complicating the generalization of standard estimators, and 
(ii) traditional inferential procedures that rely on in-sample residuals falter since residuals vanish under interpolation.
Motivated by these challenges, we study the minimum $\ell_2$-norm OLS interpolator from the perspective of parameter estimation and inference, with the broader goal of understanding how far benign overfitting extends beyond prediction.

% contributions 
\subsection{Contributions} 
In this work, we uncover high-dimensional counterparts of several key results in the classical setting.
Our analysis proceeds in two stages. 
We first develop algebraic results that are independent of any assumptions on the data-generating process. 
This assumption-free perspective makes the results broadly applicable and provides a flexible foundation for future study under specific modeling frameworks. 
Building on these results, we endow our observations with one particular generative model, allowing us to investigate how the structure of the covariates shapes parameter recovery and inference.
We summarize the main contributions of this article below and provide a comprehensive overview in Table \ref{tab:summary}.

\medskip
\noindent \textit{Algebraic results.}
%
%We begin by developing algebraic results that are independent of any assumptions on the data-generating process. 
%%
%This makes the results broadly applicable and facilitates future analyses under specific modeling frameworks. 
%%
%In particular, 
Sections~\ref{sec:row_partition} and \ref{sec:subsampled_column} provide high-dimensional analogs of three fundamental OLS results in the classical regime.
\begin{enumerate}[label=(\alph*)]
    \item 
    \textit{Leave-$k$-out formulas.} 
    Section \ref{sec:results.rows} develops a high-dimensional analogue of the leave-$k$-out formula (Theorem \ref{thm:subsampled_rows}). Specifically, we show that the OLS interpolator based on a subsample can be expressed as a projection of the full-sample OLS interpolator. 
    We further derive high-dimensional leave-one-out formulas (Corollary~\ref{cor:loo}) and corresponding residuals (Corollary~\ref{cor:loo.shortcut}). 
    While related residual characterizations have appeared previously, we provide a new proof based on the intermediate leave-one-out representation in Corollary~\ref{cor:loo}, which directly links the full-sample interpolator to its leave-one-out counterparts. 
    %Second, we leverage these residual formulas to motivate variance estimation in the interpolating regime, where in-sample residuals vanish.
    %
%    These results not only shed light on the sensitivity of the OLS interpolator to the inclusion or exclusion of data points, but also provide a foundation for variance estimation under interpolation, where in-sample residuals vanish. 

    \item 
    \textit{Frisch-Waugh-Lovell (FWL) theorem.}
    Section \ref{sec:partial_regularization} presents a high-dimensional equivalent of the FWL theorem (Theorem \ref{thm:partially_regularized_OLS}), which decomposes the influence of a covariate variable on the response after adjusting for the other variables. 
    Specifically, we consider the OLS interpolator that is (implicitly) regularized only for a subset of covariates rather than fully regularized for the entire collection of covariates (as in the standard minimum $\ell_2$-norm OLS). 
    We obtain the same formula as in the classical FWL theorem for the regularized covariates and obtain a similar yet different expression for the ``un-regularized'' covariates.
    We then illustrate the practical relevance of these results in treatment effect estimation with high-dimensional covariates, focusing on two important settings. 

    \item 
    \textit{Omitted variable bias formula.}
    Section \ref{sec:subsampled_columns} provides a high-dimensional counterpart of the omitted variable bias formula (Theorem \ref{thm:sub_column_OLS}).
    Our result relates the OLS interpolator based on a subset of covariates (aka ``short regression'') to 
    the full set of covariates (aka ``long regression''). 
    We show that the omitted variable bias formula in the classical regime continues to hold in high-dimensions for the minimum $\ell_2$-norm OLS interpolators; a weaker form of the formula concerning prediction holds for any pair of OLS interpolators, not necessarily having minimum $\ell_2$-norm. 
    
\end{enumerate}

\noindent \textit{Stochastic results.}
In Section \ref{sec:stat_inf}, we extend our analysis to the statistical setting, studying the behavior of the OLS interpolator under the Gauss-Markov model. 
\begin{enumerate}[label=(\alph*)]
    \item 
    \textit{Gauss-Markov theorem.} 
    Section \ref{sec:gauss.markov} presents a high-dimensional extension of the Gauss-Markov theorem (Theorem \ref{thm:Gauss_Markov}), which establishes the optimality of the OLS interpolator among linear unbiased estimators, albeit with certain restrictions compared to the classical counterpart. 
    Since the true linear regression model is not identifiable in high-dimensions, it is reasonable to focus on estimating its projection onto the rowspace of $\bX$. 
    Our result proves that the OLS interpolator exhibits minimal covariance for this projected parameter. 
    
    \item 
    \textit{Homoskedastic variance estimator.} Building on our leave-one-out results,
    Section \ref{sec:var_estimation} proposes and analyzes a natural variance estimator for both the classical and high-dimensional regimes under homoskedasticity (Theorem \ref{thm:var.estimator}), providing a potential foundation for inference under interpolation.
    To the best of our knowledge, this variance estimator is novel even in the classical regime. 
    We show that this estimator is unbiased in classical settings but is conservative in high-dimensional settings. 
\end{enumerate}

\noindent \textit{Simulation studies.} In Section~\ref{sec:simulations.updated}, we present simulation studies that illustrate the role of $\bX$ in parameter estimation and inference. Our results suggest that spiked-covariance-type models tend to yield especially favorable performance. Interestingly, this pattern mirrors conclusions from the benign overfitting literature on prediction, e.g., \citep{bartlett2020benign, Tsigler2020BenignOI}, suggesting that the value of approximate low-rank structure extends beyond predictive accuracy to parameter recovery and inference as well.

\begin{table}[t]
    \centering
    \caption{Summary of main results in this paper.}
    \label{tab:summary}
    \begin{adjustbox}{max width=\textwidth}
    \begin{tabular}{l c c c}
        \toprule
        Result  &   Section     &   Classical counterpart    &   Implications/Applications   \\
        \midrule
        Theorem \ref{thm:subsampled_rows}   &   Section \ref{sec:results.rows}  &   Leave-$k$-out formula       &   Stability of the OLS interpolator \\
        Theorem \ref{thm:sub_column_OLS}    &   Section \ref{sec:subsampled_columns}    &   Omitted variable bias formula   &   Bias quantification for omitted variables \\
        Theorem \ref{thm:partially_regularized_OLS} & Section \ref{sec:partial_regularization}  &   Frisch-Waugh-Lovell (FWL) theorem   &   Interpretation of OLS coefficients\\
        Theorem \ref{thm:Gauss_Markov}  &   Section \ref{sec:gauss.markov}  &   Gauss-Markov theorem    &   Optimality of the OLS interpolator\\
        Theorem \ref{thm:var.estimator} &   Section \ref{sec:var_estimation}    &   Homoskedastic variance estimator    &   (Conservative) variance estimation\\
        \bottomrule
    \end{tabular}
    \end{adjustbox}
\end{table}

% remark 
\begin{remark}[Statement of results] \label{remark:hd_classical} 
    To better compare our high-dimensional results with their known counterparts from the classical regime, we will often present both sets of results in the same statement. 
    Unless specified otherwise, all results in the classical regime are simply a restatement of previously known results from the literature, i.e., such classical results are \textit{not} novel results developed in this article. 
\end{remark} 

% notation 
\subsection{Notation}\label{sec:notation}
For any $n \in \NN$, let $[n] \coloneq \{ 1, \dots, n \}$.  
We denote $\langle \bx, \by \rangle = \sum_{i=1}^n x_i y_i$ for $\bx, \by \in \RR^n$ and $\| \bv \|_2 \coloneqq \langle \bv, \bv \rangle^{1/2}$. 
Let $\be_i$ signify the $i$-th standard basis vector of $\RR^n$ for each $i \in [n]$.
Let $\bI_n$ denote the $n \times n$ identity matrix. 
For any matrix $\bM \in \Rb^{n \times p}$, let $\| \bM \|_{\F} \coloneqq \tr( \bM^{\top} \bM )^{1/2}$ denote its Frobenius norm.
Let $\bM^{\top}$, $\bM^{-1}$, and $\bM^{\dagger}$ denote the transpose, inverse, and Moore-Penrose pseudoinverse of $\bM$, respectively. 
If the singular value decomposition of $\bM$ is denoted as $\bM = \bU \bS \bV^{\top}$, then $\bM^{\dagger} = \bV \bS^{-1} \bU^{\top}$. 
The column space and row space of $\bM$ are denoted as $\colsp(\bM) \subseteq \RR^n$ and $\rowsp(\bM) \subseteq \RR^p$, respectively. 
Let $\bP_{\bM} \coloneqq \bM \bM^{\dagger}$ symbolize the projection matrix onto the column space of $\bM$. 
We define $\bP_{\bM}^{\perp} = \bI_n - \bP_{\bM}$ to represent the projection matrix onto the orthogonal complement of $\colsp(\bM)$.
Moreover, let $\Gram_{\bM} = (\bM \bM^\top)^{\dagger} \in \Rb^{n \times n}$.
For $\bv \in \Rb^n$, let $\diag(\bv) \coloneqq \sum_{i=1}^n v_i \be_i \be_i^\top$.
For $\bM \in \Rb^{n \times n}$, let $\diag(\bM) \coloneqq \sum_{i=1}^n M_{ii} \be_i \be_i^\top$. 
For nonempty sets $\cI \subseteq [n]$ and $\cJ \subseteq [p]$, let $\bM_{\cI, \cJ} \in \RR^{|\cI| \times |\cJ|}$ be the submatrix of $\bM$ containing elements  $M_{ij}$ with $(i,j) \in \cI \times \cJ$. 
As a shorthand, let $\bM_{\cI, \star} \in \Rb^{|\cI| \times p}$ be the submatrix obtained by retaining the rows of $\bM$ with indices in $\cI$; define $\bM_{\star, \cJ} \in \Rb^{n \times |\cJ|}$ analogously. 
For a vector $\bv \in \Rb^n$, let  $\bv_{\cI} \in \Rb^{|\cI |}$ be its subvector.

\section{The ordinary least squares (OLS) estimator} \label{sec:setup} 

% minimum norm estimator 
\subsection{The minimum Euclidean-norm OLS estimator} \label{sec:ols.interpolator}
We assume access to in-sample, or training, data $\{(\bx_i, y_i): i \in [n]\}$, where $\bx_i \in \Rb^p$ and $y_i \in \Rb$ are the $i$-th covariate and response, respectively. 
Let $\bX \in \Rb^{n \times p}$ collect the covariates and $\by \in \Rb^n$ collect the responses. 
We are interested in the minimum Euclidean-norm ($\ell_2$-norm) OLS estimator based on the data $(\bX, \by)$, which is formally defined below.  

% definition
\begin{definition}\label{defn:min_ols} 
   Let $\cS \coloneqq \argmin_{\bbeta' \in \Rb^p} \| \by - \bX \bbeta' \|_2^2 $ denote the set of vectors that minimize the in-sample $\ell_2$-error. 
    The \emph{minimum $\ell_2$-norm OLS estimator}, denoted by $\hbbeta$, 
    is defined as the minimizer satisfying 
    $\hbbeta \in \argmin_{\bbeta \in \cS} \| \bbeta \|_2^2$.
\end{definition}

Although Definition~\ref{defn:min_ols} is implicit, it defines a unique estimator for every $(\bX, \by)$: even if $\cS$ contains multiple elements, it is an affine set, and the strict convexity of $\| \bbeta \|_2^2$ ensures a unique minimum-norm element.
In what follows, we study two regimes determined by the relative sizes of $n$ and $p$, under which we further impose natural rank conditions on $\bX$. 

%{\color{cyan} While Definition \ref{defn:min_ols} only provides an implicit characterization of $\hbbeta$, the set $\argmin_{\bbeta \in \cS} \| \bbeta \|_2^2$ is always a singleton, thereby making $\hbbeta$ well-defined.} 

\begin{proposition}\label{prop:beta_hat}
    For any $\bX$ and $\by$, 
    the minimum $\ell_2$-norm OLS estimator is $\hbbeta = \bX^{\dagger} \by$.
\end{proposition}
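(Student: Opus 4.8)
The plan is to first describe the least-squares solution set $\cS$ explicitly as a coset of $\ker(\bX)$, and then single out $\bX^{\dagger}\by$ as its unique minimum-norm element using the orthogonality properties of the pseudoinverse.

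First I would show that $\bbeta' \in \cS$ if and only if $\bX \bbeta' = \bP_{\bX} \by$, the orthogonal projection of $\by$ onto $\colsp(\bX)$. This follows from the decomposition $\by - \bX \bbeta' = (\by - \bP_{\bX}\by) + (\bP_{\bX}\by - \bX\bbeta')$ together with the Pythagorean identity, since the first summand lies in $\colsp(\bX)^{\perp}$ and the second lies in $\colsp(\bX)$, so that $\|\by - \bX\bbeta'\|_2^2 = \|\by - \bP_{\bX}\by\|_2^2 + \|\bP_{\bX}\by - \bX\bbeta'\|_2^2$ is minimized exactly when the second term vanishes. Using the identity $\bX \bX^{\dagger} = \bP_{\bX}$, the vector $\bX^{\dagger}\by$ is one such solution, and hence $\cS = \bX^{\dagger}\by + \ker(\bX)$.

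Next, the key algebraic fact I would invoke --- via the compact SVD $\bX = \bU \bS \bV^{\top}$ and $\bX^{\dagger} = \bV \bS^{-1} \bU^{\top}$ --- is that $\colsp(\bX^{\dagger}) = \colsp(\bV) = \rowsp(\bX) = \ker(\bX)^{\perp}$; in particular $\bX^{\dagger}\by$ is orthogonal to $\ker(\bX)$. Then for any $\bv \in \ker(\bX)$, the Pythagorean theorem gives $\|\bX^{\dagger}\by + \bv\|_2^2 = \|\bX^{\dagger}\by\|_2^2 + \|\bv\|_2^2 \geq \|\bX^{\dagger}\by\|_2^2$, with equality precisely when $\bv = \bzero$. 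Therefore $\bX^{\dagger}\by$ is the unique minimizer of $\|\bbeta\|_2$ over $\cS$, which is exactly $\hbbeta$ by Definition \ref{defn:min_ols}; this argument simultaneously confirms the singleton claim stated after the definition.

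The computation is entirely routine; the only points requiring care are the correct identification of $\cS$ with the coset $\bX^{\dagger}\by + \ker(\bX)$ and the orthogonal splitting $\RR^p = \rowsp(\bX) \oplus \ker(\bX)$, both of which are standard linear algebra. An equivalent route would work directly in SVD coordinates: writing $\bbeta = \bV \bc + \bW \bd$ where the columns of $\bW$ form an orthonormal basis of $\ker(\bX)$, the residual $\|\by - \bX\bbeta\|_2^2$ depends only on $\bc$ and is minimized at $\bc = \bS^{-1}\bU^{\top}\by$, while $\|\bbeta\|_2^2 = \|\bc\|_2^2 + \|\bd\|_2^2$ forces $\bd = \bzero$, recovering $\hbbeta = \bV \bS^{-1}\bU^{\top}\by = \bX^{\dagger}\by$.
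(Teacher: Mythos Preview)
Your proposal is correct and follows essentially the same approach as the paper: both arguments identify $\cS = \bX^{\dagger}\by + \ker(\bX)$, use the fact that $\bX^{\dagger}\by \in \rowsp(\bX) = \ker(\bX)^{\perp}$, and then apply the Pythagorean expansion $\|\bX^{\dagger}\by + \bv\|_2^2 = \|\bX^{\dagger}\by\|_2^2 + \|\bv\|_2^2$ to conclude. Your version is slightly more explicit in deriving the coset form of $\cS$ via the projection $\bP_{\bX}\by$, whereas the paper briefly invokes strict convexity and coercivity before arriving at the same Pythagorean inequality.
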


\begin{proof} %[Proof of Proposition \ref{prop:beta_hat}]
In the case when $\rank(\bX) \geq p$ (i.e., when $\rank(\bX) = p$),  the set $\cS$ consists of only a single element, i.e., $\cS = \big\{ \bX^{\dagger} \by \big\}$ and thus, $\hbbeta = \bX^{\dagger} \by$. 
When $\rank(\bX) < p$, the set $\cS$ takes the form of an affine subspace in $\RR^p$ of codimension $\rank(\bX)$, i.e., $\cS = \big\{ \bX^{\dagger} \by + \bv \in \RR^p : \bv \in \cN(\bX) \big\}$; notably, $\cS$ is a convex set. 
Given that the quadratic objective function $\|\bbeta \|_2^2$ is both strictly convex and coercive (meaning $\|\bbeta \|_2^2 \to \infty$ as $\| \bbeta \|_2 \to \infty$), there exists a unique minimum solution $\hbbeta \in \argmin_{\bbeta \in \cS} \| \bbeta \|_2^2$. 
Now, observe that $\bX^{\dagger} \by \in \rowsp(\bX) = \cN(\bX)^{\perp}$ and thus, $\langle \bX^{\dagger} \by, \bv \rangle = 0$ for all $\bv \in \cN(\bX)$. 
Therefore, $\| \bX^{\dagger} \by + \bv \|_2^2 = \| \bX^{\dagger} \by \|_2^2 + \| \bv \|_2^2 \geq \| \bX^{\dagger} \by \|_2^2$, with equality holding if and only if $\bv = 0$. 
Consequently, $\hbbeta = \bX^{\dagger} \by $, regardless of the rank of $\bX$. 
\end{proof}

For ease of exposition, we define a map, $\olsmin: (\bX, \by) \mapsto \bX^{\dagger} \by$, and write $\olsmin(\bX, \by) \coloneqq \bX^\dagger \by = \hbbeta$. 
In Sections \ref{sec:classical_regime} and \ref{sec:hd_regime} to follow, we consider two parameterization regimes, introduce relevant assumptions, and provide corresponding interpretations for $\hbbeta$. 

% classical results
\subsection{OLS in the classical regime} \label{sec:classical_regime}
Recall that the classical, underparameterized regime is characterized by $n > p$. 
We state a canonical assumption placed on $\bX$ within this context. 

\begin{assumption} \label{assump:column_rank}
	$\bX$ has full column rank, i.e., $\rank(\bX) = p$.
\end{assumption}

Assumption~\ref{assump:column_rank} can hold only if $n \geq p$. 
As aforementioned, this implies that $\cS$ is a singleton and hence, the OLS solution is unique. 
Moreover, under Assumption~\ref{assump:column_rank}, we note that $\bX^{\top} \bX$ is invertible and thus, $\bX^{\dagger} = ( \bX^{\top} \bX )^{-1} \bX^{\top}$.
This restores the traditional OLS formula: 
$\hbbeta = ( \bX^{\top} \bX )^{-1} \bX^{\top} \by$.

% Assumptions 
\subsection{OLS in the high-dimensional regime} \label{sec:hd_regime}
The primary focus in this paper lies within the high-dimensional, overparameterized regime with $n \le p$. 
As needed, we impose the following assumption on $\bX$. 

\begin{assumption} \label{assump:row_rank}
	$\bX$ has full row rank, i.e., $\rank(\bX) = n$.
\end{assumption}

Assumption \ref{assump:row_rank} holds only if $n \leq p$. 
It implies that $\colsp(\bX) = \RR^n$ and $\min_{\bbeta' \in \RR^p} \| \by - \bX \bbeta' \|_2^2 = 0$. 
Consequently, for any $\bbeta \in \cS$, the relationship $y_i = \langle \bx_i, \bbeta \rangle$ holds for all $i \in [n]$, i.e., every solution in $\cS$ ``interpolates'' the in-sample data.  
By contrast, the classical regime with Assumption~\ref{assump:column_rank} yields a solution that can have nontrivial in-sample residuals. 
Under Assumption \ref{assump:row_rank}, every $\bbeta \in \cS$ is an interpolating solution. 
To remove any ambiguities, this work focuses on the minimum $\ell_2$-norm solution $\hbbeta$ (Definition~\ref{defn:min_ols}), which we will refer as the ``OLS interpolator'' for convenience. % unless explicitly stated otherwise. 

\begin{remark}[Geometric view for high-dimensional OLS]
    Under Assumption~\ref{assump:row_rank}, we have $\hbbeta = \bX^{\top} ( \bX \bX^{\top} )^{-1} \by$. 
    For each $j \in [p]$, the $j$-th coordinate of $\hbbeta$ is given as $\widehat{\beta}_j 
            = {\bx_j'}^{\top} ( \bX \bX^{\top} )^{-1} \by$, 
    where $\bx'_j \in \RR^n$ denotes the vector whose $i$-th coordinate is the value of variable $j$ in the $i$-th sample. 
    Since $\bX \bX^{\top}$ is positive definite, we can interpret this formula as the general inner product between $\bx'_j$ and $\by$ weighted by $( \bX \bX^{\top} )^{-1}$.
\end{remark}

\section{Row-partitioned regression} \label{sec:row_partition}
\subsection{Setting}
The leave-$k$-out setting is a classical topic in statistics with broad applicability and has long served as an important theoretical device \citep{Karoui_loo_2, maleki_2}. In this section, we contribute to this literature by developing high-dimensional analogues of the leave-$k$-out formulas for the OLS interpolator. To do so, we partition the rows of $\bX$ into two disjoint subsets. Since the rows correspond to samples, this partition induces two subsamples of the data.

Formally, let $\cI \subseteq [n]$ be a nonempty index set, and denote its complement by $\cI^c = [n] \setminus \cI$. 
Our objective is to investigate the minimum $\ell_2$-norm OLS estimator based on the subsampled data $\{(\bx_i, y_i): i \in \cI \}$: 
\begin{align} \label{eq:loo.model.beta.general}
    \hbbeta^{(\cI,\star)} \coloneqq \olsmin(\bX_{\cI,\star}, \by_{\cI}) = \bX_{\cI,\star}^{\dagger} \by_{\cI}. 
\end{align}
Note that the \textit{superscript} $(\cI, \star)$ indicates \eqref{eq:loo.model.beta.general} is determined from the submatrix $\bX_{\cI, \star}$; this is to avoid any confusion from \textit{subscripts}, which indicate subvectors. 

\subsection{Row-subsampled OLS}\label{sec:results.rows} 
Our first primary result relates the OLS estimates based on the subsampled data $(\bX_{\cI, \star}, \by_{\cI})$ to the OLS estimate based on the full data $(\bX, \by)$. 

% theorem 
\begin{theorem}\label{thm:subsampled_rows} \
    Let $\cI \subseteq [n]$ be a nonempty set.
    \begin{itemize} %[label=(\alph*)]
    	
    	\item[(a)] \emph{Classical} $(n > p)$: If Assumption~\ref{assump:column_rank} holds, then  
        \begin{align} 
            \hbbeta^{(\cI,\star)} 
                = \hbbeta - \big( \bX^{\dagger} \big)_{\star, \cI^c} \cdot \left\{ \big( \bP_{\bX}^{\perp} \big)_{\cI^c, \cI^c} \right\}^{-1} \cdot \hbvarepsilon_{\cI^c}, 
                \label{eq:beta.classical}
        \end{align} 
        where $\hbvarepsilon = \by - \bX \hbbeta$ denotes the in-sample residual vector.
    
    	\item[(b)] \emph{High-dimensional} $(n \le p)$: If Assumption \ref{assump:row_rank} holds, then 
        \begin{align}  
            \hbbeta^{(\cI,\star)} 
                &= (\bX_{\cI,\star})^{\dagger} \bX_{\cI,\star} \cdot \hbbeta \label{eq:beta.hd}
                \\
                &= \left\{ \bI_p - \bX^{\dagger}_{\star,\cI^c} (\bX^{\dagger}_{\star,\cI^c} )^{\dagger} \right\} \cdot \hbbeta. \label{eq:beta.hd.2}
        \end{align}
    \end{itemize}
\end{theorem} 
Theorem~\ref{thm:subsampled_rows} establishes a relationship between the minimum $\ell_2$-norm OLS estimator derived from the subsample $\cI$ and the full-sample. 
For the classical regime, \eqref{eq:beta.classical} is well-known but statisticians may find the following equivalent form more familiar:
\begin{equation} \label{eq:beta.classical.2}
    \hbbeta^{(\cI,\star)} 
        = \hbbeta - ( \bX^{\top} \bX )^{-1} (\bX_{\cI^c, \star})^{\top} \cdot \left\{ \bI_{|\cI^c|} - \bX_{\cI^c, \star} ( \bX^{\top} \bX )^{-1} (\bX_{\cI^c,\star})^{\top} \right\}^{-1} \cdot \hbvarepsilon_{\cI^c},
\end{equation} 
where $\bX_{\cI^c, \star} ( \bX^{\top} \bX )^{-1} (\bX_{\cI^c,\star})^{\top} = \bH_{\cI^c, \cI^c}$ is the principal submatrix of the hat matrix $\bH \coloneqq \bP_{\bX} = \bX ( \bX^{\top}\bX )^{-1} \bX^{\top}$ corresponding to $\cI^c$. 
The expression in \eqref{eq:beta.classical.2} is equivalent to \eqref{eq:beta.classical} since 
\[
    \big( \bX^{\top} \bX \big)^{-1} (\bX_{\cI^c, \star})^{\top} = \big[ ( \bX^{\top} \bX )^{-1} \bX \big]_{\star, \cI^c},
    ~~
    \bI_{|\cI^c|} - \bH_{\cI^c, \cI^c} = ( \bI_n - \bP_{\bX} )_{\cI^c, \cI^c} = \big( \bP_{\bX}^{\perp} \big)_{\cI^c, \cI^c}.
\]

For the high-dimensional regime, \eqref{eq:beta.hd} reveals that the row-subsampled OLS interpolator derived from $\cI$ can be computed by simply taking an orthogonal projection of that from the full-sample onto the subspace spanned by the subsampled rows. 
Equation \eqref{eq:beta.hd.2} offers a complementary representation of the OLS interpolator in terms of $\cI^c$. This characterization is particularly relevant for settings such as the leave-$k$-out configuration, which we analyze next.

% remark
\begin{remark}[Hat matrix]\label{rem:hat_matrix}
    The projection matrix $\bH = \bP_{\bX}$ is called the ``hat matrix'' because $\widehat{\by} = \bX \hbbeta = \bH \by$. 
    In the classical regime under Assumption \ref{assump:column_rank}, $\bH = \bX ( \bX^{\top}\bX )^{-1} \bX^{\top}$ and the diagonal entries $H_{ii}$ satisfy $0 \leq H_{ii} \leq 1$ and $\sum_{i=1}^n H_{ii} = \tr (\bH) = p < n$ because $\bH$ is an idempotent matrix with rank $p$. 
    In the high-dimensional regime under Assumption \ref{assump:row_rank}, $\bH = \bI_n$ and $\widehat{\by} = \by$.  
\end{remark}

% LOO CONFIGURATION 
\subsection{Leave-one-out configuration} \label{sec:loo.results} 
We focus on an important, specialized case where $\cI = \{i\}^c = [n] \setminus \{i\}$ is the entire population except one element $i \in [n]$.
We refer to this case as the leave-one-out configuration.
To avoid cluttered notation, let $\bX_{\sim i} \coloneqq \bX_{\{i\}^c,\star} \in \RR^{(n-1) \times p}$ and $\by_{\sim i} = \by_{\{i\}^c} \in \Rb^{n-1}$ denote the leave-$i$-out data. 
We denote the corresponding OLS estimator after leaving out the $i$-th datapoint as 
\begin{align} \label{eq:loo.model.beta.i}
    \hbbeta^{(\sim i)} \coloneqq \olsmin(\bX_{\sim i}, \by_{\sim i}) = \bX_{\sim i}^\dagger \by_{\sim i}.
\end{align}
We reemphasize that our use of the superscript $(\sim i)$ indicates that \eqref{eq:loo.model.beta.i} is derived from $\bX_{\sim i}$ and not a subvector of $\hbbeta$ linked to coordinates other than $i$.

% LOO FORMULA 
\medskip 
\noindent 
\textit{Leave-one-out OLS formula.}
% \noindent 
% \textbf{LOO OLS formula.} 
In Corollary \ref{cor:loo}, we quantify the gap between the leave-$i$-out and full-sample minimum $\ell_2$-norm OLS estimators in the classical and high-dimensional regimes. 
Recall $\bG_{\bX} = (\bX \bX^{\top})^{-1}$ is the ``inverse Gram matrix'' for the rows of $\bX$.

% PRIMARY LOO RESULT 
\begin{corollary} \label{cor:loo} 
Let $\cI = \{i\}^c$ for any $i \in [n]$. 
\begin{itemize} %[label=(\alph*)]
	
	\item[(a)] \emph{Classical} $(n > p)$: If Assumption~\ref{assump:column_rank} holds, then  
	\begin{align} 
		\hbbeta^{(\sim i)} 
            &= \hbbeta - \hvarepsilon_i \cdot \frac{(\bX^\top \bX)^{-1} \bx_i}{1 - \bx_i^\top (\bX^\top \bX)^{-1} \bx_i}, \label{eq:beta.loo.classical} 
	\end{align} 
    where $\hvarepsilon_i = y_i - \bx_i^\top \hbbeta$ denotes the $i$-th in-sample residual.

	\item[(b)] \emph{High-dimensional} $(n \le p)$: If Assumption~\ref{assump:row_rank} holds, then  
	\begin{align}
        \hbbeta^{(\sim i)}
            &= \left\{ \bI_p - \frac{\bX^{\dagger} \cdot \be_i \be_i^\top \cdot (\bX^{\dagger})^\top}{\be_i^\top \cdot \bG_{\bX} \cdot \be_i} \right\} \cdot \hbbeta  %\label{eq:beta.loo.hd}\\
            %&
            = \left\{ \bI_p - \frac{(\bX^\top \bX)^{\dagger} \bx_i \bx_i^\top (\bX^\top \bX)^\dagger}{\bx_i^\top [(\bX^\top \bX)^\dagger]^2 \bx_i} \right\} \cdot \hbbeta.  \label{eq:beta.loo.hd}
            %\label{eq:beta.loo.hd.2}
    \end{align}  
\end{itemize}
\end{corollary} 

% interpretation 
We make two observations about Corollary \ref{cor:loo}.
First, under Assumption~\ref{assump:row_rank}, both the in-sample residuals $\hvarepsilon_i$ and denominator $1 - \bx_i^\top (\bX^\top \bX)^{-1} \bx_i$ in \eqref{eq:beta.loo.classical} take value zero. 
Thus, even if we define 0/0 = 0, we cannot hope to obtain an expression analogous to \eqref{eq:beta.loo.classical} in the high-dimensional $p>n$ regime.  
Instead, we obtain \eqref{eq:beta.loo.hd}, which expresses the leave-one-out solution as an orthogonal projection of the full-sample solution 
onto $\vspan( \bX^{\dagger}  \be_i )^{\perp}$, which is a subspace of co-dimension $1$ in $\RR^p$. 
The second formula of \eqref{eq:beta.loo.hd} follows by observing $\bX^{\dagger}  \be_i = (\bX^{\top} \bX)^{\dagger} \cdot \bx_i$.

Corollary~\ref{cor:loo} also fits into a broader literature relating leave-one-out estimators to their full-sample counterparts. Closely related work includes \cite{Karoui_loo_1} for ridge regression, \cite{Karoui_loo_2} for robust regression, \cite{maleki_2} for a broad class of regularized estimators through approximate leave-one-out risk analysis, \cite{maleki_1} for nonsmooth losses and regularizers, and \cite{sur_loo_1} for asymptotic analyses of maximum likelihood estimators. 
In this context, Corollary~\ref{cor:loo} contributes a formal leave-one-out characterization for the OLS interpolator. 
Notably, while its structure is closely related in spirit to analogous formulas for ridge regression, it necessarily differs because of the leverage-score behavior noted above.

% LOO RESIDUALS
\medskip \noindent \textit{Leave-one-out prediction residual formula.} 
% \noindent
% \textbf{LOO prediction residual formula.} 
%
Let $\tvarepsilon_i\coloneqq y_i - \bx_i^{\top} \hbbeta^{(\sim i)}$ represent the leave-$i$-out prediction residual, calculated as the difference between $y_i$ and the prediction made by the leave-$i$-out OLS model applied to $\bx_i$. 
%
% As we will discuss in Section \ref{sec:apps.rows}, 
This prediction residual is useful for understanding the generalization capabilities of the OLS interpolator and provides a potential foundation for variance estimation (Section~\ref{sec:var_estimation}). 
However, one immediate drawback of calculating the leave-one-out residuals is the necessity of estimating $n$ distinct leave-one-out models. 
To address this challenge, we leverage Corollary~\ref{cor:loo} to derive a ``shortcut'' formula that efficiently computes the $n$ leave-one-out prediction residuals, represented by the vector $\tbvarepsilon \in \Rb^n$.
%
% We present the proof of Corollary \ref{cor:loo.shortcut} in Appendix \ref{sec:proof_loo_residual}. 

% HD+CLASS COR
\begin{corollary} \label{cor:loo.shortcut} 
Let $\bX \in \Rb^{n \times p}$ and $\by \in \Rb^n$. 
\begin{itemize} %[label=(\alph*)]
		
	\item[(a)] \emph{Classical} $(n > p)$: If Assumption~\ref{assump:column_rank} holds, then
	\begin{align}
		\tbvarepsilon &= \big[\diag(\bP_{\bX}^\perp)\big]^{-1} \cdot \bP_{\bX}^\perp  \by,  \label{eq:loo.shortcut.classical}
	\end{align} 
    	where we recall $\bP_{\bX}^\perp = \bI_n - \bX (\bX^\top \bX)^{-1} \bX^\top$. 
	
	\item[(b)] \emph{High-dimensional} $(n \le p)$: If Assumption~\ref{assump:row_rank} holds, then 
	\begin{align}
		\tbvarepsilon &= \big[ \diag(\Gram_{\bX}) \big]^{-1} \cdot \Gram_{\bX}  \by,  \label{eq:loo.shortcut.hd}
	\end{align}
    	where we recall $\Gram_{\bX} = (\bX \bX^\top)^{-1}$. 
\end{itemize}
\end{corollary}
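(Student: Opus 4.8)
The plan is to obtain both shortcut identities directly from the leave-one-out coefficient formulas in Corollary~\ref{cor:loo}, by evaluating the prediction residual $\tvarepsilon_i = y_i - \bx_i^\top \hbbeta^{(\sim i)}$ for each $i \in [n]$ and then stacking the $n$ resulting scalar equations into a single matrix equation. The guiding idea in both regimes is to rewrite $\bx_i^\top \hbbeta^{(\sim i)}$ entirely in terms of full-sample quantities and to recognize the scalars that appear as diagonal entries of $\bP_\bX^\perp$ (classical) or $\Gram_\bX$ (high-dimensional).

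For the classical regime, substituting \eqref{eq:beta.loo.classical} yields
\[
    \bx_i^\top \hbbeta^{(\sim i)}
        = \bx_i^\top \hbbeta - \hvarepsilon_i \cdot \frac{H_{ii}}{1 - H_{ii}},
\]
where $H_{ii} = \bx_i^\top (\bX^\top\bX)^{-1}\bx_i = (\bP_\bX)_{ii}$. Using $y_i - \bx_i^\top\hbbeta = \hvarepsilon_i$ together with the simplification $1 + H_{ii}/(1-H_{ii}) = 1/(1-H_{ii})$, I obtain $\tvarepsilon_i = \hvarepsilon_i / (1 - H_{ii})$. Identifying $\hbvarepsilon = \bP_\bX^\perp\by$ and $1 - H_{ii} = (\bP_\bX^\perp)_{ii}$ and stacking over $i$ then gives \eqref{eq:loo.shortcut.classical}. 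The only well-definedness point is that the entries $(\bP_\bX^\perp)_{ii}$ must be nonzero; this is precisely the condition (equivalently, $\bX_{\sim i}$ retains full column rank) under which the leave-one-out estimator in Corollary~\ref{cor:loo}(a) is itself defined, so nothing new is needed here.

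For the high-dimensional regime I substitute \eqref{eq:beta.loo.hd} and exploit two consequences of Assumption~\ref{assump:row_rank}: since $\bx_i^\top = \be_i^\top \bX$ and $\bP_\bX = \bI_n$, one has $\bx_i^\top \bX^\dagger = \be_i^\top \bP_\bX = \be_i^\top$, hence $\bx_i^\top \bX^\dagger \be_i = 1$; and $(\bX^\dagger)^\top\hbbeta = (\bX^\dagger)^\top\bX^\dagger\by = \Gram_\bX\by$. Combined with the interpolation identity $\bx_i^\top\hbbeta = y_i$, these collapse the correction term so that $\bx_i^\top\hbbeta^{(\sim i)} = y_i - (\Gram_\bX\by)_i / (\Gram_\bX)_{ii}$, i.e., $\tvarepsilon_i = (\Gram_\bX\by)_i / (\Gram_\bX)_{ii}$; stacking over $i$ yields \eqref{eq:loo.shortcut.hd}. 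Here $(\Gram_\bX)_{ii} > 0$ automatically since $\Gram_\bX = (\bX\bX^\top)^{-1}$ is positive definite, so no extra hypothesis is required.

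I do not expect a genuine obstacle: the substance is entirely contained in Corollary~\ref{cor:loo}, and the work here is careful bookkeeping — distinguishing full-sample from leave-one-out quantities and matching bilinear forms such as $\be_i^\top(\bX^\dagger)^\top\hbbeta$ with diagonal entries of the correct matrix. The closest thing to a subtlety is the rank bookkeeping in the classical case noted above; in the high-dimensional case, deleting a row of a full-row-rank $\bX$ preserves full row rank automatically, so $\hbbeta^{(\sim i)}$ is unconditionally well-defined.
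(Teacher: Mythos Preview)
Your proposal is correct and follows essentially the same approach as the paper: both derive the high-dimensional shortcut by substituting the leave-one-out formula from Corollary~\ref{cor:loo} into $\tvarepsilon_i = y_i - \bx_i^\top\hbbeta^{(\sim i)}$, exploiting interpolation ($\bx_i^\top\hbbeta = y_i$), the identity $\bx_i^\top\bX^\dagger\be_i = 1$ (equivalently $\bx_i^\top(\bX^\top\bX)^\dagger\bx_i = 1$), and $(\bX^\dagger)^\top\bX^\dagger = \Gram_\bX$, then stacking. The paper omits the classical case as known; your derivation of it is the standard one.
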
 

While Corollary~\ref{cor:loo} and our derivation of \eqref{eq:loo.shortcut.hd} based on Corollary~\ref{cor:loo} are
novel to the best of our knowledge, the formula in \eqref{eq:loo.shortcut.hd} itself has been previously discovered, e.g., \cite[Section 7.2]{hastie_22} derives \eqref{eq:loo.shortcut.hd} from the well-known ridge leave-one-out cross-validation formula. 
%{\color{red}
%see \cite{maleki_2} for a thorough analysis on the approximate leave-one-out risk for a large class of regularized estimators.
%} 

By \eqref{eq:loo.shortcut.classical} and \eqref{eq:loo.shortcut.hd}, the leave-one-out residuals in both the classical and high-dimensional regimes can be directly computed from $(\bX, \by)$ in a single shot without ever having to construct and assess the performance of $n$ distinct models. 
Moreover, \eqref{eq:loo.shortcut.classical} and \eqref{eq:loo.shortcut.hd} reveal that the expressions for the leave-one-out residuals take the same form in both regimes, albeit with different constructions of $\bP_{\bX}^\perp$ and $\Gram_{\bX}$. 
On this note, observe that in the classical regime, $\bP_{\bX}^\perp \by = (\bI_n - \bH) \by = \by - \widehat{\by}$ denotes the OLS estimator's in-sample residuals. 
Thus, there is a simple transformation between the OLS estimator's in-sample and leave-one-out residuals as in \eqref{eq:loo.shortcut.classical}. 
Such a relationship does not hold in high-dimensions as the in-sample residuals of the OLS interpolator are zero. 

% remark 
\begin{remark}[Resemblance between two matrices]
    %We remark on the similarity between the two matrices that appear in \eqref{eq:loo.shortcut.classical} and \eqref{eq:loo.shortcut.hd}. 
    In the classical regime under Assumption~\ref{assump:column_rank}, if $H_{ii} < 1$ for all $i \in [n]$, then $\diag(\bP_{\bX}^\perp)$ is invertible and $( [\diag(\bP_{\bX}^\perp)]^{-1} \cdot \bP_{\bX}^\perp )_{ii} = 1$ for all $i \in [n]$. 
    Similarly, in the high-dimensional regime under Assumption~\ref{assump:row_rank}, then $\Gram_{\bX}$ is positive definite and $\diag(\Gram_{\bX})$ is invertible, which yields $( [ \diag(\Gram_{\bX}) ]^{-1} \cdot \Gram_{\bX} )_{ii} = 1$ for all $i \in [n]$. 
    % Due to their resemblance, we conjecture that these matrices may admit geometric interpretations that can shed light on their roles in relating the response $\by$ to the LOO residual $\tbvarepsilon$. 
    % We leave this exploration as an open question for future research.
\end{remark} 

% remark
\begin{remark}[Applications of leave-one-out formulas]
For brevity, we summarize several concrete applications of the leave-one-out formulas in Table~\ref{tab:applications.rows} and relegate detailed descriptions of each application to the Supplementary Material. 
\end{remark} 

\begin{table} [t]
    \small
    \centering
    \caption{Summary of applications of leave-one-out residuals.}
    \label{tab:applications.rows}
    \begin{adjustbox}{max width=\textwidth}
    \begin{tabular}{l c c }
        \toprule
        Result  &   Section     &   Application/Utility   \\
        \midrule
        Corollary~\ref{cor:press}       &   Section~\ref{sec:app.loo}      &   Shortcut formula for the predicted residual error sum of squares statistic\\
        Corollary~\ref{cor:online}      &   Section~\ref{sec:app.loo}      &   Shortcut formula to update the OLS estimator $\hbbeta$ in online settings   \\
        Corollary~\ref{cor:loo.jackknife.1} & Section~\ref{sec:app.jackknife}      &   Connection between the leave-one-out residuals and the \jackknife~(in high-dim.)   \\
        Corollary~\ref{cor:loo.jackknife.2} &   Section~\ref{sec:app.jackknife}    &   \jackknife~variance estimator expressed in terms of leave-one-out residuals   \\
        Corollary~\ref{cor:loo.pred}    & Section~\ref{sec:app.jackknife}  &   Shortcut formula for prediction intervals with the \jackknifep~method of \cite{jackknife+}    \\
        \bottomrule
    \end{tabular}
    \end{adjustbox}
\end{table}

% PARTITIONED COLUMNS
\section{Column-partitioned regression} \label{sec:subsampled_column}
\subsection{Setting}
This section complements the results in Section~\ref{sec:row_partition} by considering the leave-$k$-covariates-out configuration, which has also received considerable attention as both an important theoretical and practical tool \citep{Karoui_loo_2}. In particular, we develop high-dimensional analogues of the Frisch-Waugh-Lovell theorem and the omitted variable bias formula for the OLS interpolator. To this end, we partition the columns of $\bX$. Since each column of $\bX$ corresponds to a distinct covariate, this column partition naturally induces different subsets of covariates.

Formally, let $\cJ \subseteq [p]$ be a nonempty set and $\cJ^c$ its complement; we will imbue these sets with context-specific meanings below. 
In the classical regime, Assumption~\ref{assump:column_rank} implies that both $\bX_{\star,\cJ}$ and $\bX_{\star,\cJ^c}$ have full column rank for any $\cJ \subset [p]$ since the columns of $\bX$ are linearly independent. 
For the high-dimensional regime, we introduce an additional regularity assumption. 
\begin{assumption} \label{assump:partial} 
    For a nonempty set $\cJ \subseteq [p]$, $\bX_{\star,\cJ}$ has full row rank and $\bX_{\star,\cJ^c}$ has full column rank, i.e., $\rank(\bX_{\star,\cJ}) = n$ and $\rank(\bX_{\star,\cJ^c}) = |\cJ^c|$.
\end{assumption} 

Assumption~\ref{assump:partial} implies Assumption~\ref{assump:row_rank} and is therefore stronger; in particular, $\rank(\bX) \geq \rank(\bX_{\star,\cJ})$.
Next, we consider two closely related contexts of column partitioning. 
In Section \ref{sec:partial_regularization}, we study the OLS estimator that minimizes the norm of $\hbbeta$ confined to $\cJ$. 
In Section \ref{sec:subsampled_columns}, we analyze the OLS estimator based on the subset of covariates in $\cJ$ relative to the full covariate set. 
Before proceeding, we recall Remark~\ref{remark:hd_classical} concerning the accompaniment by classical results, which remains pertinent within this section. 

% PARTIAL REGULARIZATION 
\subsection{Partially regularized OLS} \label{sec:partial_regularization}
We define the $\cJ$-partially regularized OLS estimator as 
\begin{align}\label{eqn:j_partial}
	\hbbeta^{[\cJ]} &\in \argmin_{\bbeta \in \cS} \| \bbeta_{\cJ} \|_2^2,
	~\text{ where }~ 
	\cS \coloneqq \argmin_{\bbeta \in \Rb^p} \| \by - \bX \bbeta \|_2^2. 
\end{align} 
When comparing the $\cJ$-partially regularized OLS estimator with the minimum $\ell_2$-norm OLS estimator in Definition \ref{defn:min_ols}, we see that the former solely minimizes the coefficients corresponding to those columns indexed by $\cJ$ whereas the latter minimizes all coefficients.
Clearly, if $\cJ = [p]$, the two estimators coincide. 
At the same time, if $\rank(\bX) \geq p$, then $\cS$ is a singleton set and the two estimators coincide regardless of $\cJ$. 
To motivate \eqref{eqn:j_partial}, we consider two canonical examples from causal inference. 

% RANDOMIZED EXPERIMENT
\begin{example}[Randomized experiments] \label{ex:rand}
Consider estimating the average treatment effect (ATE) in a randomized experiment. 
While the difference-in-means estimator is unbiased for the ATE, leveraging pre-treatment covariates can improve efficiency.  
The OLS regression adjustment method of \cite{lin_ols}, which resolves critiques by \cite{freedman2} on the classical analysis of covariance method of \cite{Fisher25}, is widely regarded as a best-practice \citep{wager2024causal}. 
Formally, let $\bX_{\star, \cJ}$ denote the pre-treatment covariates and $\bX_{\star, \cJ^c} = \bone$ the intercept vector. 
Lin's estimator (i) fits two separate OLS models for the treatment and control groups, and (ii) estimates the ATE as the difference of the intercepts, $\hbbeta_{\cJ^c}$.
Under the finite-population model (which abstains from assuming a hypothetical outcome generating process) with $n > p$, \cite{lin_ols} establishes that this approach is consistent, asymptotically normal, and more efficient than the difference-in-means estimator. 
Despite refinements and extensions \citep{lei2021regression, chang_ecma}, the behavior of Lin's estimator when $p > n$ remains unclear. 
In fact, Lin's approach is not even well-defined when $p > n$ as the OLS solution is no longer unique. 
A natural resolution is to minimize the $\ell_2$-norm of the covariate coefficients while leaving the intercept unregularized \citep{elements_of_stat_learning}, thereby avoiding dependence on the arbitrary choice of origin for the outcome vector. 
The ATE estimate then corresponds to the difference in the (unregularized) intercepts, $\hbbeta^{[\cJ]}_{\cJ^c}$, between the treatment and control groups. 
We explore this extension in a simulation study of Section~\ref{sec:sim.treatment}.
\end{example}

% OBSERVATIONAL STUDY
\begin{example}[Observational studies] \label{ex:obs}
Now consider ATE estimation in an observational study. 
Let $\bX_{\star, \cJ}$ again represent pre-treatment covariates, and define $\bX_{\star, \cJ^c} = [\bD, \bone]$, where $\bD$ is the treatment vector and $\bone$ the intercept. 
When $n > p$, the coefficient on the treatment indicator in a single OLS regression is a common ATE estimator \citep{Ding2024FirstCourse}. 
In the super-population framework, this coefficient provides a consistent, unbiased, and asymptotically normal estimate, provided unconfoundedness and exogeneity hold. 
When $p > n$, however, this estimator becomes ill-defined. 
Mirroring the logic of Example~\ref{ex:rand}, a natural remedy is to minimize the $\ell_2$-norm of the covariate coefficients while leaving the treatment indicator and intercept unregularized. 
The ATE is then estimated by the first element of $\hbbeta^{[\cJ]}_{\cJ^c}$. 
We study this extension through a simulation analysis in Section~\ref{sec:sim.treatment}. 
More generally, in the presence of endogeneity, an instrumental variables approach replaces the treatment indicator $\bD$ with its first-stage predicted values from a regression on instruments, covariates, and the intercept, yielding a two-stage least squares estimator that retains the same OLS-like structure.
\end{example}

Towards analyzing the settings above and other related applications, we provide the decompositional form for the $\cJ$-partially regularized OLS estimator into expressions for those coefficients of $\cJ$ and $\cJ^c$ separately.

% theorem 
\begin{theorem} [Frisch-Waugh-Lovell theorem] \label{thm:partially_regularized_OLS}
Let $\cJ \subseteq [p]$ be a nonempty set. 
Denote $\bW = \bX_{\star,\cJ}$ and $\bT = \bX_{\star,\cJ^c}$ as the \underline{w}ide and \underline{t}all sub-matrices of $\bX$, respectively.
\begin{itemize} 

	\item[(a)] \emph{Classical} $(n > p)$: If Assumption~\ref{assump:column_rank} holds, then 
	\begin{align}
		\hbbeta_{\cJ}^{[\cJ]} &= \big( \bP_{\bT}^{\perp} \bW \big)^{\dagger} \bP_{\bT}^{\perp} \by,
            \label{eq:fwl.j} \\
        \hbbeta_{\cJ^c}^{[\cJ]} &= \big( \bP_{\bW}^{\perp} \bT \big)^{\dagger} \bP_{\bW}^{\perp} \by.
            \label{eq:fwl.jc} 
	\end{align} 

	\item[(b)] \emph{High-dimensional} $(n \le p)$: If Assumption~\ref{assump:row_rank} holds, then 
	\begin{align}
        \hbbeta^{[\cJ]}_{\cJ} &= ( \bP_{\bT}^{\perp} \bW )^{\dagger} \bP_{\bT}^{\perp} \by.
		\label{eq:fwl.j.hd.0} 
    \end{align}
    If Assumption~\ref{assump:partial} additionally holds, then
    \begin{align}
        \hbbeta^{[\cJ]}_{\cJ^c} 
            &= \big( \bW^{\dagger} \bT \big)^{\dagger} \bW^{\dagger} \by.  
            \label{eq:fwl.jc.hd}
    \end{align}
\end{itemize}
\end{theorem}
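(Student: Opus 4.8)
The plan is to prove all four identities by ``profiling out'' one of the two coordinate blocks and thereby reducing to a single least-squares or minimum-norm problem. After a harmless column permutation we may assume $\bX = [\bW \mid \bT]$; write a generic $\bbeta \in \RR^p$ as $\bbeta = (\bu,\bv)$ with $\bu = \bbeta_{\cJ}$ and $\bv = \bbeta_{\cJ^c}$, so that $\bX\bbeta = \bW\bu + \bT\bv$. \emph{Classical regime.} Here $\cS_1$ is a singleton, so $\hbbeta^{[\cJ]} = \hbbeta$ and the partial regularization is vacuous. Minimizing $\|\by - \bW\bu - \bT\bv\|_2^2$ first over $\bv$ (the optimizer is $\bv = \bT^\dagger(\by - \bW\bu)$ since $\bT$ has full column rank) leaves $\|\bP_{\bT}^\perp\by - (\bP_{\bT}^\perp\bW)\bu\|_2^2$; because Assumption~\ref{assump:column_rank} forces $\bP_{\bT}^\perp\bW$ to have full column rank, minimizing over $\bu$ yields \eqref{eq:fwl.j}, and the symmetric computation (swap $\bW \leftrightarrow \bT$) yields \eqref{eq:fwl.jc}.

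\emph{High-dimensional regime --- the $\cJ$-block.} Under Assumption~\ref{assump:row_rank} the least-squares residual is identically zero, so $\cS_1 = \{(\bu,\bv): \bW\bu + \bT\bv = \by\}$ and $\hbbeta^{[\cJ]}$ minimizes $\|\bu\|_2^2$ over this affine set. A vector $\bu$ is feasible (for some $\bv$) iff $\by - \bW\bu \in \colsp(\bT)$, equivalently $(\bP_{\bT}^\perp\bW)\bu = \bP_{\bT}^\perp\by$; the admissible $\bu$'s thus form an affine subspace of $\RR^{|\cJ|}$, and $\hbbeta_{\cJ}^{[\cJ]}$ is its minimum-norm element, which is the same for every minimizer in $\cS_1$ (so the $\cJ$-block is well-defined). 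The key point is that this system is \emph{consistent}: from $\colsp(\bW) + \colsp(\bT) = \RR^n$ (Assumption~\ref{assump:row_rank}), applying $\bP_{\bT}^\perp$ gives $\colsp(\bP_{\bT}^\perp\bW) = \colsp(\bT)^\perp \ni \bP_{\bT}^\perp\by$, so the minimum-norm solution is exactly $(\bP_{\bT}^\perp\bW)^\dagger\bP_{\bT}^\perp\by$, which is \eqref{eq:fwl.j.hd.0}.

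\emph{High-dimensional regime --- the $\cJ^c$-block.} Now add Assumption~\ref{assump:partial}, so $\bW$ has full row rank and $\bT$ has full column rank. Interchange the order of minimization: for each fixed $\bv$ the equation $\bW\bu = \by - \bT\bv$ is solvable, and the minimum-$\|\bu\|_2$ solution has norm $\|\bW^\dagger(\by - \bT\bv)\|_2 = \|\bW^\dagger\by - (\bW^\dagger\bT)\bv\|_2$. Hence $\hbbeta_{\cJ^c}^{[\cJ]}$ minimizes $\|\bW^\dagger\by - (\bW^\dagger\bT)\bv\|_2^2$ over $\bv$. Since $\bW^\dagger$ is injective on $\RR^n$ and $\bT$ has full column rank, $\bW^\dagger\bT$ has full column rank, so this least-squares problem has the unique minimizer $\hbbeta_{\cJ^c}^{[\cJ]} = (\bW^\dagger\bT)^\dagger\bW^\dagger\by$, i.e.\ \eqref{eq:fwl.jc.hd} (and the $\cJ^c$-block is thereby well-defined under Assumption~\ref{assump:partial}).

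\emph{Anticipated obstacle.} The pseudoinverse bookkeeping is routine; the genuine care lies in two places. First, in the overparameterized case one minimizes $\|\bbeta_{\cJ}\|_2^2$ over an affine \emph{constraint} set rather than an unconstrained quadratic, so one must justify interchanging the two partial minimizations and verify that the blocks being computed are actually unique (the $\cJ$-block under Assumption~\ref{assump:row_rank}, the $\cJ^c$-block under Assumption~\ref{assump:partial}). Second, one must nail the rank/consistency facts that separate the regimes --- namely $\colsp(\bP_{\bT}^\perp\bW) = \colsp(\bT)^\perp$ under Assumption~\ref{assump:row_rank}, which is exactly what makes $(\bP_{\bT}^\perp\bW)^\dagger\bP_{\bT}^\perp\by$ an exact interpolant rather than a least-squares approximant and which can fail if $\bX$ lacks full row rank, and the fact that $\bW^\dagger\bT$ inherits full column rank from $\bT$ under Assumption~\ref{assump:partial}; this last point is precisely why the classical $\bP_{\bW}^\perp$ in \eqref{eq:fwl.jc} must be replaced by $\bW^\dagger$ in \eqref{eq:fwl.jc.hd} (indeed $\bP_{\bW}^\perp = 0$ in that setting).
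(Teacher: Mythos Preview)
Your argument is correct. For the classical regime and for the $\cJ$-block \eqref{eq:fwl.j.hd.0} in the high-dimensional regime, your profiling approach coincides with the paper's: both project the interpolation constraint by $\bP_{\bT}^\perp$ to isolate the equation $(\bP_{\bT}^\perp\bW)\bu = \bP_{\bT}^\perp\by$ and then take the minimum-norm solution. Your explicit verification that $\colsp(\bP_{\bT}^\perp\bW) = \colsp(\bT)^\perp$ under Assumption~\ref{assump:row_rank} (so that the system is consistent) is a detail the paper leaves implicit.

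For the $\cJ^c$-block \eqref{eq:fwl.jc.hd}, however, you take a genuinely different and considerably more elementary route. The paper proceeds by writing the KKT system for the equality-constrained quadratic program \eqref{eqn:j_partial}, then inverts the $(p+n)\times(p+n)$ KKT matrix via two nested applications of the Schur complement (Lemma~\ref{lem:schur}), reading off $\hbbeta^{[\cJ]}_{\cJ^c}$ from the appropriate block of the inverse. Your approach instead profiles out $\bu$ first: for each $\bv$ the minimum-norm $\bu$ satisfying $\bW\bu = \by - \bT\bv$ has squared norm $\|\bW^\dagger\by - (\bW^\dagger\bT)\bv\|_2^2$, reducing the problem to an ordinary least-squares in $\bv$ whose design matrix $\bW^\dagger\bT$ inherits full column rank from $\bT$. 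This nested-minimization argument is shorter, avoids any block-matrix algebra, and makes transparent \emph{why} $\bW^\dagger$ replaces $\bP_{\bW}^\perp$ in the high-dimensional formula. The paper's KKT computation, on the other hand, simultaneously delivers the alternative expression $\hbbeta^{[\cJ]}_{\cJ} = \bP_{\bW^\top}\bP_{\bW^\dagger\bT}^\perp\bW^\dagger\by$ of Remark~\ref{remark:fwl.extension}, which your method does not immediately yield.
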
 
The expression in \eqref{eq:fwl.j}, established in the classical regime, is commonly recognized as the {\em Frisch-Waugh-Lovell} (FWL) theorem \citep{frisch_waugh, lovell1963seasonal} within econometrics. 
It is pertinent to remark that $\cS$ becomes a singleton in the classical regime under Assumption \ref{assump:column_rank}. 
Consequently, the roles of $\cJ$ and $\cJ^c$ are symmetric in this setting.

Considering \eqref{eq:fwl.j}, we regard \eqref{eq:fwl.j.hd.0} as its corresponding counterpart in the high-dimensional regime. 
As such, we proceed to discuss the interpretation of this expression and elucidate the difference between \eqref{eq:fwl.jc} and \eqref{eq:fwl.jc.hd}.
\begin{itemize} 
	\item[(i)] \textit{The FWL formula \eqref{eq:fwl.j.hd.0}}: 
	Interestingly, \eqref{eq:fwl.j.hd.0} takes the same form as \eqref{eq:fwl.j} and thus, inherits the same interpretation. 
	Specifically, \eqref{eq:fwl.j.hd.0} is computed by regressing $\bP_{\bT}^\perp \by$ on $\bP_{\bT}^\perp \bW$, where $\bP_{\bT}^\perp \by$ is the residual vector from the OLS fit of $\by$ on $\bT$ and $\bP_{\bT}^\perp \bW$ is the residual matrix from the column-wise OLS fit of $\bW$ on $\bT$. 
	In words, \eqref{eq:fwl.j.hd.0} measures the ``impact'' of $\bW$ on $\by$ after ``adjusting'' for the impact of $\bT$. 
	Additionally, we can further rewrite \eqref{eq:fwl.j.hd.0} as
	\begin{align}
		\hbbeta^{[\cJ]}_{\cJ} 
    		&= \left[ (\bP_{\bT}^\perp \bW)^\top (\bP_{\bT}^\perp \bW) \right]^\dagger (\bP_{\bT}^\perp \bW)^\top \cdot \bP_{\bT}^\perp \by
    		= (\bP_{\bT}^\perp \bW)^\dagger \by,
                \label{eq:fwl.j.hd.1} 
	\end{align}
	which is equivalent to $\olsmin(\bP_{\bT}^\perp \bW, \by)$.
    Recall $\hbbeta^{[\cJ]}_{\cJ}$ is obtained by regressing $\bP_{\bT}^\perp \by$ on $\bP_{\bT}^\perp \bW$ after ``residualization.'' 
    The expression in \eqref{eq:fwl.j.hd.1} suggests that it is not crucial to residualize $\by$ separately because it will be automatically accompanied by residualizing $\bW$.
	
	\item[(ii)] \textit{Extending the FWL formula \eqref{eq:fwl.jc.hd}}: 
	While \eqref{eq:fwl.j} and \eqref{eq:fwl.j.hd.0} admit the same formulation, it may no longer be possible to express $\hbbeta^{[\cJ]}_{\cJ^c}$ in the same form as in \eqref{eq:fwl.jc} in the high-dimensional setting. 
    This is due to asymmetry between $\cJ$ and $\cJ^c$ inducted by the partial regularization, as  evident from \eqref{eqn:j_partial}. 
    To address this, we present supplementary results by imposing a stronger assumption in Assumption \ref{assump:partial}, which implies Assumption \ref{assump:row_rank}. 
	Observe that the expressions for $\hbbeta^{[\cJ]}_{\cJ^c}$ in \eqref{eq:fwl.jc} and \eqref{eq:fwl.jc.hd} are different, which is to be expected.
	To see this, observe that Assumption~\ref{assump:partial} implies $\bP_{\bW} = \bI_n$, resulting in $\bP_{\bW}^\perp = \bzero$. 
	At the same time, $\colsp((\bW \bW^\top)^{-1} \bT)  \subseteq \colsp(\bW)$ whereas $\colsp(\bP_{\bW}^{\perp} \bT)  \subseteq \colsp( \bW )^{\perp}$.
    Instead, \eqref{eq:fwl.jc.hd} demonstrates that $\hbbeta^{[\cJ]}_{\cJ^c} = \olsmin(\bW^{\dagger} \bT, \bW^{\dagger} \by)$, i.e., $\hbbeta^{[\cJ]}_{\cJ^c}$ can be obtained by regressing $\bW^{\dagger} \by$ onto $\bW^{\dagger} \bT$. 
    Alternatively, using \(\bM^{\dagger} = (\bM^{\top} \bM)^{\dagger} \bM^{\top}\), we can rewrite \eqref{eq:fwl.jc.hd} as
    \begin{align}
        \hbbeta^{[\cJ]}_{\cJ^c}
            &= \left[  \big( \bW^{\dagger} \bT \big)^{\top}  \big( \bW^{\dagger} \bT \big) \right]^{\dagger} \big( \bW^{\dagger} \bT \big)^{\top} \bW^{\dagger} \by\\
            % &= \left[  \bT^{\top} \bW^{\dagger}^{\top} \bW^{\dagger} \bT  \right]^{\dagger} \bT^{\top} \bW^{\dagger}^{\top} \bW^{\dagger} \by\\
            &=  ( \bT^\top \Gram_{\bW} \bT )^\dagger \bT^\top \Gram_{\bW} \by  \label{eq:fwl.jc.hd.gls}
     	    \\
            &= \left[\left( \Gram_{\bW} \bT \right)^{\top} \cdot \Gram_{\bW}^{-1} \cdot \left( \Gram_{\bW} \bT \right) \right]^\dagger \left( \Gram_{\bW} \bT \right)^{\top} \cdot \Gram_{\bW}^{-1} \cdot \Gram_{\bW} \by \label{eq:fwl.jc.hd.2}
            \\
            &= \left[ \left( \Gram_{\bW} \bT \right)^{\top} \cdot \Gram_{\bW}^{-1} \cdot \left( \Gram_{\bW} \bT \right) \right]^\dagger \left( \Gram_{\bW} \bT \right)^{\top} \cdot \by,
            \label{eq:fwl.jc.hd.3}
    \end{align}
    where $\Gram_{\bW} = (\bW \bW^{\top} )^{-1} = {\bW^{\dagger}}^{\top} \bW^{\dagger}$. 
    In this view, \eqref{eq:fwl.jc.hd.gls} can be interpreted as the generalized least squares (GLS) \citep{aitken1936iv} under a conditional noise variance of $\Gram_{\bW}^{-1}$. 
    Equivalently, \eqref{eq:fwl.jc.hd.2} also follows a GLS, where we instead regress the ``response'' $\Gram_{\bW} \by$ on the ``covariates'' $\Gram_{\bW} \bT$, assuming a conditional noise variance of $\Gram_{\bW}$. 
    As seen from Corollary~\ref{cor:loo.shortcut}, $\Gram_{\bW} \by$ and $\Gram_{\bW} \bT$ also represent the (scaled) leave-one-out residuals between $(\bW, \by)$ and $(\bW, \bT)$, respectively, and thus, play an analogous role to the in-sample residuals.  
    In turn, similar to \eqref{eq:fwl.j.hd.0}, we can view \eqref{eq:fwl.jc.hd.2} as measuring the impact of $\bT$ on $\by$ after adjusting for the impact of $\bW$. 
    Further, similar to \eqref{eq:fwl.j.hd.1}, \eqref{eq:fwl.jc.hd.3} reveals that it is crucial to residualize $\bT$ but not $\by$. 
\end{itemize} 

\begin{remark} [Alternative expression for \eqref{eq:fwl.j.hd.0}] \label{remark:fwl.extension}
Under Assumption \ref{assump:partial}, we can equivalently write $\hbbeta^{[\cJ]}_{\cJ} = \bP_{\bW^{\top}} \bP_{\bW^{\dagger} \bT}^{\perp} \bW^{\dagger} \by$, the proof of which can be found in the Supplementary Material. 
    Given that Assumption \ref{assump:partial} implies Assumption \ref{assump:row_rank}, this expression must be identical to that in \eqref{eq:fwl.j.hd.0} under Assumption \ref{assump:partial}, despite their visible dissimilarity. 
    That is,
    \begin{equation}\label{eqn:matrix_equivalence}
        \text{Assumption \ref{assump:partial}}
        \qquad\implies\qquad
        ( \bP_{\bT}^{\perp} \bW )^{\dagger} \bP_{\bT}^{\perp} = \bP_{\bW^{\top}} \bP_{\bW^{\dagger} \bT}^{\perp} \bW^{\dagger}.
    \end{equation}
    While \eqref{eqn:matrix_equivalence} may offer an intuitive geometric interpretation based on the subspaces associated to $\bW$ and $\bT$, we currently do not have a clear interpretation and record this fact for potential reference.
\end{remark}

% subsampled columns
\subsection{Column-subsampled OLS} \label{sec:subsampled_columns}
We now study the relationship between the OLS solution based on the full covariate set and the OLS solution based on a column-subset of $\bX$. 
Consider the three solution sets below: 
\begin{align}
	&\cS_1 \coloneqq \argmin_{\bbeta \in \Rb^p} \| \by - \bX \bbeta \|_2^2, 
	\\
	&\cS_2 \coloneqq \argmin_{\balpha \in \Rb^{|\cJ|}} \| \by - \bX_{\star, \cJ} \balpha \|_2^2, 
	\\
	&\cS_3 \coloneqq \argmin_{\bDelta \in \Rb^{|\cJ| \times |\cJ^c|}} \| \bX_{\star, \cJ^c} - \bX_{\star, \cJ} \bDelta \|_F^2.  \label{eq:column.s1} 
\end{align} 
Note that $\cS_1$ in \eqref{eq:column.s1} precisely corresponds to the set of OLS solutions $\cS$ in Definition~\ref{defn:min_ols}. 
To contextualize the solution sets in \eqref{eq:column.s1}, consider the setting where $\bX$ is only partially observed; namely, $\bX_{\star, \cJ}$ is observed while $\bX_{\star, \cJ^c}$ is unobserved.  
As a result, $\cS_1$ represents the ideal OLS fit that controls for all covariates; by contrast, $\cS_2$ represents the restricted model that a researcher is forced to fit using only the covariates $\bX_{\star, \cJ}$ that are accessible. 

The following result quantifies the gap between the ideal and restricted models by establishing an algebraic connection between the solution sets $\cS_1$, $\cS_2$, and $\cS_3$. 
Recall that for a vector $\bv \in \mathbb{R}^p$ and a set $\cJ \subseteq [p]$, we denote $\bv_{\cJ}$ as the subvector of $\bv$ containing coordinates with indices exclusively within $\cJ$, and $\bv_{\cJ^c}$ as the subvector containing coordinates with indices within $\cJ^c$.

% theorem 
\begin{theorem}[Omitted variable bias formula] \label{thm:sub_column_OLS} 
Let $\cJ \subseteq [p]$ be a nonempty set. 
\begin{itemize} %[label=(\alph*)]

	\item[(a)] \emph{Classical} $(n > p)$: If Assumption~\ref{assump:column_rank} holds, then for any $\hbbeta \in \cS_1$, $\hbalpha \in \cS_2$, $\hbDelta \in \cS_3$, 
	\begin{align}
		\hbalpha &= \hbbeta_{\cJ} + \hbDelta \hbbeta_{\cJ^c}. \label{eq:cochran.classical}
	\end{align} 
	
	\item[(b)] \emph{High-dimensional} $(n \le p)$: If Assumption~\ref{assump:partial} holds, then for any $\hbbeta \in \cS_1$, $\hbalpha \in \cS_2$, $\hbDelta \in \cS_3$, 
	\begin{align}
    		\bX_{\star, \cJ} \hbalpha = \bX_{\star, \cJ} ( \hbbeta_{\cJ} + \hbDelta \hbbeta_{\cJ^c} ). \label{eq:cochran.hd}
        \end{align} 
    	If $\hbbeta \in \cS_1$, $\hbalpha \in \cS_2$, $\hbDelta \in \cS_3$ are each the unique minimum $\ell_2$-norm solutions, then  
          \begin{align}
		\hbalpha &= \hbbeta_{\cJ} + \hbDelta \hbbeta_{\cJ^c}. \label{eq:cochran.hd.2}
	\end{align} 	
		
\end{itemize}
\end{theorem} 
The result presented in \eqref{eq:cochran.classical}, stated in the classical regime, is commonly referred 
as {\em Cochran's formula} \citep{cox_cochran} in statistics
and the {\em omitted variable bias formula} \citep{angrist2008mostly} in econometrics since it quantifies the bias in the OLS coefficient of $\bX_{\star, \cJ}$ that is incurred by omitting variables in $\bX_{\star,\cJ^c}$. 
Accordingly, \eqref{eq:cochran.hd} and \eqref{eq:cochran.hd.2} can be viewed as extensions of the omitted variable bias formula to the high-dimensional setting. 
We compare the two results. 

In the classical regime, the sets $\cS_1$ to $\cS_3$ are singletons, which implies that the solutions $(\hbalpha, \hbbeta, \hbDelta)$ are unique. 
By contrast, in the high-dimensional regime, each set contains infinitely many solutions, rendering \eqref{eq:cochran.hd} valid for infinitely many tuples. 
Nevertheless, these tuples differ solely in the nullspace of the subdesign matrix $\bX_{\star, \cJ}$, and thus, the expressions on both sides of \eqref{eq:cochran.hd} yield identical prediction values for all $(\hbalpha, \hbbeta, \hbDelta) \in \cS_2 \times \cS_1 \times \cS_3$. 
In this sense, \eqref{eq:cochran.hd} can be construed as the predictive counterpart of the omitted variable bias formula.
Even in the high-dimensional setting, the classical omitted variable bias relationship can be restored by imposing minimum $\ell_2$-norm constraints on the solutions in $\cS_1$, $\cS_2$, and $\cS_3$. 
Restricting attention to these minimum-norm solutions yields \eqref{eq:cochran.hd.2}, which matches the classical formula \eqref{eq:cochran.classical}. 

Next, consider a leave-one-covariate-out configuration (a special case of \eqref{eq:column.s1} with $\cJ^c$ corresponding to a single column), mirroring the leave-one-sample-out configuration of Section~\ref{sec:loo.results}. 
In this setting, \eqref{eq:cochran.hd.2} provides an explicit formula for the leave-one-covariate-out regression coefficient $\hbalpha$ in terms of the full regression model $\hbbeta$. 
The following result offers a complementary, implicit characterization of leave-one-covariate-out models. 
\begin{corollary} [Leave-one-covariate-out] \label{cor:loco}
Consider the high-dimensional setting $(n \le p)$. 
For any $j \in [p]$, let $\cJ = \{j\}^c = [p] \setminus \{j\}$ and $\cJ^c = \{j \}$. 
Denote $\hbalpha^{(\sim j)} \in \cS_2$ as the minimum $\ell_2$-norm OLS solution and  $\hbbeta^{(\sim j)} \in \Rb^p$ as its zero-padded version: for each entry $\ell \in [p]$, 
\begin{equation} \label{eq:ovb.special.append}
	\hbeta^{(\sim j)}_\ell 
        = \begin{cases}
		  \halpha^{(\sim j)}_\ell, & \text{if } \ell < j,\\
            0, & \text{if } \ell = j,\\
    		\halpha^{(\sim j)}_{\ell-1}, & \text{if } \ell > j. 
	\end{cases} 
\end{equation} 
If $\hbbeta \in \cS_1$ is the minimum $\ell_2$-norm OLS solution, then 
\begin{align}
	\hbbeta = \sum_{j = 1}^p \lambda_j \cdot \hbbeta^{(\sim j)}, %\label{eq:ovb.goal}
\end{align}
where $\lambda_j = (p-n)^{-1} (1-h_{jj})$ with $h_{j \ell} = \big[\bX^\top (\bX \bX^\top)^{-1} \bX \big]_{j \ell}$ for any $j, \ell \in [p]$. 
\end{corollary}
Corollary~\ref{cor:loco} states that the full regression model can be written as a convex combination of leave-one-covariate-out coefficients since $\lambda_j \ge 0$ and $\sum_{j=1}^p \lambda_j = 1$. 
In the Supplementary Material, we demonstrate that Corollary~\ref{cor:loco}, attributed to \citet[Proposition 1]{spiess2023double}, can be established via Theorem~\ref{thm:sub_column_OLS}. 

%how Theorem~\ref{thm:sub_column_OLS} can be used to establish Corollary~\ref{cor:loco}, attributed to \citet[Proposition 1]{spiess2023double}.

\section{Statistical results} \label{sec:stat_inf} 

\subsection{Setting}
Thus far, we have not imposed any stochastic assumptions, and all preceding results have been purely algebraic in nature. 
In this section, we investigate statistical properties of the minimum $\ell_2$-norm OLS estimator under a particular stochastic model. 
Within this framework, we extend the Gauss-Markov theorem from the classical regime to the high-dimensional regime, and then leverage the leave-one-out results from Section~\ref{sec:row_partition} to furnish a homoskedastic variance estimator for the interpolating regime, where the in-sample residuals vanish. 

% Stochastic assumptions 
\medskip
\noindent \textit{Stochastic assumptions.} 
We postulate the Gauss-Markov model \citep{aitken1936iv}.

% normal linear model 
\begin{assumption} \label{assump:lm} 
    The response vector is generated by $\by = \bX \bbeta + \bvarepsilon$, 
	where $\bX$ and $\bbeta$ are fixed, and $\bvarepsilon$ is a random vector with $\bbE[\bvarepsilon] = \bzero$ and $\Cov(\bvarepsilon) = \bSigma$. 
\end{assumption} 

% identification 
\noindent \textit{Identification.} 
Throughout this section, we define %$\bbeta^* = \bP_{\bX^\top} \bbeta = \bX^\dagger \bX \bbeta$
\begin{align}
    \bbeta^* = \bP_{\bX^\top} \bbeta = \bX^\dagger \bX \bbeta \label{eq:beta.start}
\end{align}
as the orthogonal projection of $\bbeta$ onto $\rowsp(\bX)$, which corresponds to the minimum $\ell_2$-norm solution to the equation $\bbE[\by] = \bX \bbeta$ under Assumption \ref{assump:lm}. 
In the classical regime under Assumption \ref{assump:column_rank}, $\bX^{\dagger} \bX = \bI_p$, yielding $\bbeta^* = \bbeta$. 
However, in the high-dimensional regime, the $p$-dimensional vector $\bbeta$ cannot be uniquely determined from $n$ measurements. 
Instead, we can identify a subspace $\cV_{\bbeta} \subseteq \RR^p$ of least squares solutions with codimension $p-n$ that contains $\bbeta$ and is orthogonal to $\rowsp(\bX)$. 
Accordingly, rather than attempting to recover $\bbeta$ itself, we focus on the more realistic target
$\bbeta^* = \argmin_{\bbeta' \in \cV_{\bbeta}} \| \bbeta' \|_2$ 
as defined in \eqref{eq:beta.start}. 
This perspective is similar in spirit to the treatment of ridge regression in \cite{shao2012estimation} and \cite{zhang_ridge_1, zhang_ridge_2}.

% General Results
\subsection{Some general results} \label{sec:stat.results}
The next proposition presents an elementary result on the first two moments of $\hbbeta$.

\begin{proposition} \label{prop:linear_model} 
If Assumption~\ref{assump:lm} holds, then the following statements are true.
\begin{itemize}

	\item[(a)] \emph{Classical} $(n > p)$: $\bbE[\hbbeta] = \bbeta$ and $\nCov(\hbbeta) = \bX^{\dagger} \cdot \bSigma \cdot (\bX^{\dagger})^\top$. 
	\item[(b)] \emph{High-dimensional} $(n \le p)$: $\bbE[\hbbeta] = \bbeta^*$ and $\nCov(\hbbeta) = \bX^{\dagger} \cdot \bSigma \cdot (\bX^{\dagger})^\top$. 
\end{itemize}
\end{proposition}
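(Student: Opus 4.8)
The plan is to read off both moments directly from the closed form $\hbbeta = \bX^{\dagger}\by$ established in Proposition~\ref{prop:beta_hat}, after substituting the Gauss--Markov model \eqref{eq:linear_model}. Writing $\hbbeta = \bX^{\dagger}(\bX\bbeta + \bvarepsilon) = \bX^{\dagger}\bX\bbeta + \bX^{\dagger}\bvarepsilon$ isolates a deterministic term together with a single random term, $\bX^{\dagger}\bvarepsilon$; everything else is routine.

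\emph{Mean.} By linearity of expectation and $\bbE[\bvarepsilon] = \bzero$, we obtain $\bbE[\hbbeta] = \bX^{\dagger}\bX\bbeta$. In the classical regime, Assumption~\ref{assump:column_rank} gives $\bX^{\dagger} = (\bX^{\top}\bX)^{-1}\bX^{\top}$, hence $\bX^{\dagger}\bX = \bI_p$ and $\bbE[\hbbeta] = \bbeta$. In the high-dimensional regime, $\bX^{\dagger}\bX = \bP_{\bX^{\top}}$ is the orthogonal projector onto $\rowsp(\bX)$, so $\bbE[\hbbeta] = \bP_{\bX^{\top}}\bbeta = \bbeta^{*}$, which agrees with $\bbeta$ exactly when $\bbeta \in \rowsp(\bX)$ --- the identification caveat recorded just before the statement.

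\emph{Covariance.} Since $\bX^{\dagger}\bX\bbeta$ is a fixed vector and $\bX^{\dagger}$ a fixed matrix, the affine transformation rule for covariance matrices yields $\nCov(\hbbeta) = \nCov(\bX^{\dagger}\bvarepsilon) = \bX^{\dagger}\,\nCov(\bvarepsilon)\,(\bX^{\dagger})^{\top} = \bX^{\dagger}\bSigma(\bX^{\dagger})^{\top}$. This step is identical in the two regimes; only the explicit substitution for $\bX^{\dagger}$ differs ($(\bX^{\top}\bX)^{-1}\bX^{\top}$ when $n > p$, and $\bX^{\top}(\bX\bX^{\top})^{-1}$ when $n \le p$), and that distinction plays no role in the computation.

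There is no substantive obstacle here: the proposition is essentially a one-line application of $\bbE[A\bvarepsilon + \bc] = A\bbE[\bvarepsilon] + \bc$ and $\nCov(A\bvarepsilon) = A\bSigma A^{\top}$ for deterministic $A$ and $\bc$. The only point demanding attention is bookkeeping around $\bX^{\dagger}\bX$ in part~(b): it is the rowspace projector $\bP_{\bX^{\top}}$ rather than the identity, so strictly speaking the estimand of $\hbbeta$ in the overparameterized regime is $\bbeta^{*}$ unless one additionally posits $\bbeta \in \rowsp(\bX)$.
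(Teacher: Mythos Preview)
Your proposal is correct and takes essentially the same approach as the paper: write $\hbbeta = \bX^{\dagger}(\bX\bbeta+\bvarepsilon) = \bbeta^{*} + \bX^{\dagger}\bvarepsilon$, then read off the mean and covariance directly. You even catch the bookkeeping point the paper's own proof makes (and its statement glosses over): in the high-dimensional regime the expectation is $\bbeta^{*}=\bP_{\bX^{\top}}\bbeta$, which equals $\bbeta$ only under the additional identification $\bbeta\in\rowsp(\bX)$.
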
 

\begin{proof} %[Proof of Proposition~\ref{prop:linear_model} ]
Recall that $\hbbeta = \bX^{\dagger} \by$ and thus, $\hbbeta = \bX^{\dagger} \big( \bX \bbeta + \bvarepsilon \big) = \bbeta^* + \bX^{\dagger} \bvarepsilon $. 
 Then, it follows from Assumption~\ref{assump:lm} that $\bbE\big[ \hbbeta \big] = \bbeta^*$ and $\nCov(\hbbeta) = \bX^{\dagger} \cdot \bbE[ \bvarepsilon \bvarepsilon^\top ] \cdot(\bX^{\dagger})^\top = \bX^{\dagger} \cdot \bSigma \cdot (\bX^{\dagger})^\top$.
\end{proof} 

Given that the OLS minimum $\ell_2$-norm estimator admits the same expression in both regimes, as discussed in Section~\ref{sec:ols.interpolator}, it is not surprising that the first and second moments of $\hbbeta$ admit the same expression as well. 
Furthermore, we remark that $\tbbeta \in \RR^p$ is an unbiased estimator of $\bbeta^*$ if and only if $\tbbeta$ matches the predictive capabilities of $\bbeta$ with respect to $\bX$, as expressed by $\bbE[ \bX \tbbeta] = \bX \bbeta$. We formalize this observation in the following proposition.
\begin{proposition}\label{prop:identif_pred}
    Let $\bX \in \Rb^{n \times p}$ and $\bbeta \in \RR^p$. 
    Suppose either Assumption~\ref{assump:column_rank} or Assumption~\ref{assump:row_rank} holds. 
    For any estimator $\tbbeta$ of $\bbeta^* = \bX^{\dagger} \bX \bbeta$, the following are true:
    \begin{itemize}
        \item[(a)] 
        If $\bbE[ \tbbeta ] = \bbeta^*$, then $\bbE[ \bX \tbbeta ] = \bX \bbeta$.
     
        \item[(b)]
        Conversely, if $\bbE[ \bX \tbbeta ] = \bX \bbeta$, then $\bX^{\dagger} \bX \cdot \bbE[ \tbbeta ] = \bbeta^*$. 
    \end{itemize}
\end{proposition}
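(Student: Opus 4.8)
The plan is to reduce both directions to two completely elementary ingredients: linearity of expectation, and the Moore--Penrose identity $\bX \bX^{\dagger} \bX = \bX$ (equivalently, $\bX^{\dagger}\bX = \bP_{\bX^{\top}}$ is the orthogonal projection onto $\rowsp(\bX)$, and $\bX$ agrees with $\bX(\bX^{\dagger}\bX)$ on all of $\RR^{p}$). Note that neither of these facts uses Assumption~\ref{assump:column_rank} or Assumption~\ref{assump:row_rank}; the rank hypotheses are carried along only for consistency with the rest of the section, and one could state the proposition for arbitrary $\bX$. The only structural input is the definition $\bbeta^{*} = \bX^{\dagger}\bX\bbeta$.

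For part (a), I would push the expectation through the linear map $\bbeta' \mapsto \bX\bbeta'$: assuming $\bbE[\tbbeta] = \bbeta^{*}$, we get $\bbE[\bX\tbbeta] = \bX\,\bbE[\tbbeta] = \bX\bbeta^{*} = \bX(\bX^{\dagger}\bX\bbeta) = (\bX\bX^{\dagger}\bX)\bbeta = \bX\bbeta$, where the final equality is precisely the pseudoinverse identity. This is the whole argument for (a).

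For part (b), I would run essentially the same computation, but multiply the hypothesis on the left by $\bX^{\dagger}$ instead. From $\bbE[\bX\tbbeta] = \bX\bbeta$ and linearity of expectation, $\bX^{\dagger}\bX\,\bbE[\tbbeta] = \bX^{\dagger}\,\bbE[\bX\tbbeta] = \bX^{\dagger}\bX\bbeta = \bbeta^{*}$ by the definition of $\bbeta^{*}$, which is exactly the claimed conclusion.

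There is no real obstacle here; the only point requiring care is not to over-claim in (b). The predictive hypothesis $\bbE[\bX\tbbeta] = \bX\bbeta$ constrains only the $\rowsp(\bX)$-component of $\bbE[\tbbeta]$, so the honest conclusion is $\bX^{\dagger}\bX\,\bbE[\tbbeta] = \bbeta^{*}$, i.e.\ $\Pi_{\rowsp(\bX)}(\bbE[\tbbeta]) = \bbeta^{*}$, not $\bbE[\tbbeta] = \bbeta^{*}$. In the classical regime under Assumption~\ref{assump:column_rank} one has $\bX^{\dagger}\bX = \bI_{p}$, so (a) and (b) together give a clean equivalence; in the high-dimensional regime they do not collapse, and the rowspace projection is the most one can recover from the predictive identity alone.
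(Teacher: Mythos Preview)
Your proposal is correct and follows essentially the same route as the paper's proof: both parts reduce to linearity of expectation together with the pseudoinverse identity $\bX\bX^{\dagger}\bX = \bX$, and part~(b) is handled in both cases by left-multiplying the predictive hypothesis by $\bX^{\dagger}$. Your observation that neither rank assumption is actually needed---since $\bX\bX^{\dagger}\bX = \bX$ holds for every matrix---is a small but genuine sharpening over the paper, which invokes Assumptions~\ref{assump:column_rank} and~\ref{assump:row_rank} to reduce $\bX\bX^{\dagger}\bX$ case-by-case.
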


\begin{proof} %[Proof of Proposition \ref{prop:identif_pred}]
    To prove part (a), note that $\bX^{\dagger} \bX = \bI_p$ under Assumption~\ref{assump:column_rank}, while $\bX \bX^{\dagger} = \bI_n$ under Assumption~\ref{assump:row_rank}. 
    By linearity of expectation, $\bbE[ \bX \tbbeta ] = \bX \bbE[ \tbbeta ] = \bX \bbeta^* = \bX \bX^{\dagger} \bX \bbeta = \bX \bbeta$. 
    To prove part (b), suppose that $\bbE[ \bX \tbbeta ] = \bX \bbeta$. 
    Then, $\bbE[\tbbeta] = \bX^{\dagger}\bX \bbE[\tbbeta] = \bX^{\dagger} \bbE[ \bX\tbbeta ] = \bX^{\dagger} \bX \bbeta = \bbeta^*$. 
\end{proof}

% Gauss-Markov Theorem
\subsection{The Gauss-Markov theorem} \label{sec:gauss.markov}

We now specialize to the homoskedastic setting, where $\bSigma = \sigma^2 \bI_n$. 
Below, we provide an analogous result of the Gauss-Markov theorem in the high-dimensional regime. 
Recall for symmetric matrices, $\bA \preceq \bB$ denotes $\bB - \bA$ is positive semidefinite.

% GM Theorem
\begin{theorem}[Gauss-Markov theorem]\label{thm:Gauss_Markov}
Let Assumption~\ref{assump:lm} hold with $\bSigma = \sigma^2 \bI_n$. 
    \begin{itemize}
        \item[(a)] \emph{Classical} $(n > p)$: 
    	Let Assumption~\ref{assump:column_rank} hold.
        If $\hbbeta = \olsmin(\bX, \by)$ and $\tbbeta$ is any unbiased estimator of $\bbeta$ that is linear in $\by$, then $\Cov( \hbbeta ) \preceq \Cov( \tbbeta )$.
     
        \item[(b)] \emph{High-dimensional} $(n \le p)$:
        Let Assumption~\ref{assump:row_rank} hold. 
        If $\hbbeta = \olsmin(\bX, \by)$ and $\tbbeta = \bL \by$ is any estimator of $\bbeta$ that is linear in $\by$ such that $\bbE[\bX \tbbeta] = \bX \bbeta$, then: 
        \begin{itemize}
            \item[(i)] 
            For every $\bv = \bX^{\top} \bc \in \rowsp(\bX)$, we have 
            %\[
                $\sigma^2 \|\bc\|_2^2 = \bv^{\top} \Cov( \hbbeta ) \bv = \bv^{\top} \Cov( \tbbeta ) \bv$.
            %\]
            
            \item[(ii)] 
            For every $\bw \in \rowsp(\bX)^{\perp}$, we have 
            %\[
                $0 = \bw^{\top} \Cov( \hbbeta ) \bw \leq \bw^{\top} \Cov( \tbbeta ) \bw$,
            %\]
            with equality for all such $\bw$ if and only if $\bL = \bX^\dagger$ (i.e., $\tbbeta = \hbbeta$).
        \end{itemize}
    \end{itemize}
\end{theorem}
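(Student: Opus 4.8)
The classical part (a) is standard, so I focus on the high-dimensional part (b). The plan is to parametrize any linear estimator as $\tbbeta = \bA \by$ for some fixed $\bA \in \RR^{p \times n}$, and to exploit the constraint $\bbE[\bX \tbbeta] = \bX \bbeta$ for all $\bbeta \in \RR^p$, which forces $\bX \bA \bX = \bX$. Since Assumption~\ref{assump:row_rank} gives $\colsp(\bX) = \RR^n$, i.e. $\bX$ has full row rank, this simplifies to $\bX \bA = \bI_n$; in other words $\bA$ is a right inverse of $\bX$. Recall the OLS interpolator corresponds to the particular choice $\bA_0 = \bX^{\dagger} = \bX^\top (\bX \bX^\top)^{-1}$, which is the right inverse whose rows lie in $\rowsp(\bX)$. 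With $\bSigma = \sigma^2 \bI_n$, we have $\Cov(\tbbeta) = \sigma^2 \bA \bA^\top$ and $\Cov(\hbbeta) = \sigma^2 \bX^{\dagger} (\bX^{\dagger})^\top$.

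The key algebraic step is the orthogonal decomposition $\bA = \bX^{\dagger} + \bD$ where $\bD = \bA - \bX^{\dagger}$. From $\bX \bA = \bI_n = \bX \bX^{\dagger}$ we get $\bX \bD = \bzero$, i.e. every row of $\bD$ lies in $\rowsp(\bX)^{\perp} = \ker(\bX)$. Meanwhile every row of $\bX^{\dagger} = \bX^\top(\bX\bX^\top)^{-1}$ lies in $\colsp(\bX^\top) = \rowsp(\bX)$. Hence $\bX^{\dagger} \bD^\top = \bzero$ (each entry is an inner product of a $\rowsp(\bX)$ vector with a $\rowsp(\bX)^\perp$ vector), and likewise $\bD (\bX^{\dagger})^\top = \bzero$. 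Therefore
\begin{align}
\bA \bA^\top = (\bX^{\dagger} + \bD)(\bX^{\dagger} + \bD)^\top = \bX^{\dagger}(\bX^{\dagger})^\top + \bD \bD^\top,
\end{align}
so $\Cov(\tbbeta) - \Cov(\hbbeta) = \sigma^2 \bD \bD^\top \succeq \bzero$. This already gives $\bv^\top \Cov(\hbbeta)\bv \le \bv^\top \Cov(\tbbeta)\bv$ for \emph{all} $\bv \in \RR^p$, which is even stronger than claimed on $\rowsp(\bX)$ — but the restriction in the statement is there because $\tbbeta$ is only assumed to satisfy the predictive-unbiasedness condition, not full unbiasedness, so I should double-check whether claim 1 is meant to assert genuine optimality only after projecting; in any case $\bD\bD^\top \succeq \bzero$ delivers it. For claim 2, take the trace: $\tr\Cov(\tbbeta) - \tr\Cov(\hbbeta) = \sigma^2 \tr(\bD\bD^\top) = \sigma^2 \|\bD\|_F^2 \ge 0$.

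The main obstacle is getting the constraint translation right: I must verify carefully that $\bbE[\bX\tbbeta] = \bX\bbeta$ holding for the \emph{specific} true $\bbeta$ (not all $\bbeta$) still suffices, using $\bbE[\tbbeta] = \bA\bX\bbeta$ so the condition reads $\bX\bA\bX\bbeta = \bX\bbeta$. If the theorem intends this for a fixed but arbitrary $\bbeta$, the clean identity $\bX\bA = \bI_n$ need not follow; instead one only gets $\bX\bA\bX\bbeta = \bX\bbeta$. I would resolve this by noting that the conclusion is a statement about $\Cov$, which does not depend on $\bbeta$, and the natural reading (consistent with Proposition~\ref{prop:identif_pred} and the phrase ``linear in $\by$'') is that predictive unbiasedness is required as an identity in $\bbeta$, i.e. $\bX\bA\bX = \bX$, hence $\bX\bA = \bI_n$ by full row rank. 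Once that is pinned down, the rest is the short orthogonality computation above. I would also explicitly invoke $\bX^{\dagger} = \bX^\top(\bX\bX^\top)^{-1}$ under Assumption~\ref{assump:row_rank} to make the ``rows of $\bX^{\dagger}$ lie in $\rowsp(\bX)$'' claim transparent, and state the classical case (a) as the textbook argument with $\bA\bX = \bI_p$ playing the role of $\bX\bA = \bI_n$.
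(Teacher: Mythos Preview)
Your overall strategy—writing $\tbbeta = \bA\by$, deducing $\bX\bA = \bI_n$, and decomposing $\bA = \bX^{\dagger} + \bD$ with $\bX\bD = \bzero$—is exactly the paper's approach. However, the key algebraic step is wrong, and it leads you to a conclusion the theorem explicitly does \emph{not} claim.

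The error is the assertion that $\bX^{\dagger}\bD^{\top} = \bzero$. You justify it by saying rows of $\bX^{\dagger}$ lie in $\rowsp(\bX)$ and rows of $\bD$ lie in $\rowsp(\bX)^{\perp}$, but this is a row/column confusion: $\bX^{\dagger}$ and $\bD$ are $p\times n$, so their rows live in $\RR^n$, whereas $\rowsp(\bX)\subseteq\RR^p$. What is actually true is that the \emph{columns} of $\bX^{\dagger}=\bX^{\top}(\bX\bX^{\top})^{-1}$ lie in $\rowsp(\bX)$ and the \emph{columns} of $\bD$ lie in $\rowsp(\bX)^{\perp}$ (from $\bX\bD=\bzero$). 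This gives $(\bX^{\dagger})^{\top}\bD=\bzero$ and $\bD^{\top}\bX^{\dagger}=\bzero$, but \emph{not} $\bX^{\dagger}\bD^{\top}=\bzero$. Consequently the cross terms in
\[
\bA\bA^{\top}=\bX^{\dagger}(\bX^{\dagger})^{\top}+\bX^{\dagger}\bD^{\top}+\bD(\bX^{\dagger})^{\top}+\bD\bD^{\top}
\]
do \emph{not} vanish, and $\Cov(\tbbeta)-\Cov(\hbbeta)$ is not positive semidefinite in general. The paper's own Remark immediately after the theorem gives the counter-example $\bX=[1,\,0]$, $\bD=[0,\,\alpha]^{\top}$, for which $\Cov(\tbbeta)-\Cov(\hbbeta)=\begin{bmatrix}0&\alpha\\\alpha&\alpha^2\end{bmatrix}$ has negative determinant. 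So your ``even stronger'' claim is false, and the restriction to $\bv\in\rowsp(\bX)$ in part~(b.1) is essential, not cosmetic.

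The fix is straightforward and is what the paper does. For claim~1, if $\bv\in\rowsp(\bX)$ then $\bv^{\top}\bD=\bzero$ (inner products of $\bv$ with columns of $\bD$), so sandwiching by $\bv$ kills every term containing $\bD$ and yields $\bv^{\top}\Cov(\tbbeta)\bv=\bv^{\top}\Cov(\hbbeta)\bv+\sigma^2\|\bD^{\top}\bv\|_2^2\ge\bv^{\top}\Cov(\hbbeta)\bv$. For claim~2, use the cyclic property of trace together with the identity that \emph{is} true: $\tr(\bX^{\dagger}\bD^{\top})=\tr(\bD^{\top}\bX^{\dagger})=0$, so $\tr\Cov(\tbbeta)=\tr\Cov(\hbbeta)+\sigma^2\|\bD\|_{\F}^2$.
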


Theorem \ref{thm:Gauss_Markov}(a) is the classical {\em Gauss-Markov theorem}, which asserts that $\bv^{\top} \Cov( \hbbeta ) \bv \leq \bv^{\top} \Cov( \tbbeta ) \bv$ for all unbiased linear estimators $\tbbeta$ and every direction $\bv \in \Rb^p$. 
That is, under homoskedasticity, OLS is the {\em best linear unbiased estimator}.

When $n \leq p$, $\bbeta$ is not point-identifiable from $\bX \bbeta$ due to rank deficiency, rendering unbiased estimation of $\bbeta$ infeasible without additional assumptions. 
Instead, we require estimators $\tbbeta$ that are ``unbiased through the lens of $\bX$,'' in the sense that $\bbE[ \bX \tbbeta ] = \bX \bbeta$. 
For linear estimators $\tbbeta = \bL \by$, this condition is equivalent to $\bX \bL = \bI_n$, since $\bbE[ \bX \tbbeta ] = \bX\bL \bbE[ \by ] = \bX\bL\bX \bbeta = \bX \bbeta$. 

Within this class of ``unbiased-through-$\bX$'' linear estimators, Theorem \ref{thm:Gauss_Markov}(b) yields a slightly weaker, ``directional''-analog of the classical Gauss-Markov conclusion: 
(i) On $\rowsp(\bX)$, the directional variance is invariant across all right-inverses: for any $\bv \in \rowsp(\bX)$, the directional variance $\bv^{\top} \Cov(\tbbeta) \bv$ is the same for all $\bL$ satisfying $\bX \bL = \bI_n$. 
(ii) On $\rowsp(\bX)^{\perp}$, $\hbbeta = \bX^{\dagger} \by$ uniquely minimizes directional variance, provided $\sigma > 0$. 
The variance-minimization on $\rowsp(\bX)^{\perp}$ connects to generalization and singles out the OLS interpolator $\hbbeta$ among all estimators of the form $\tbbeta = \bL \by$ with identical in-sample variances on $\rowsp(\bX)$. 
Thus, Theorem \ref{thm:Gauss_Markov}(b) states that $\hbbeta$ is ``best'' among all linear estimators $\tbbeta$ satisfying $\bbE[\bX \tbbeta] = \bX \bbeta$.

% counterexample
\begin{remark} [Limitation of the Gauss-Markov property when $n \le p$] \label{remark:counter}
    In the high-dimensional regime, the strong dominance result $\Cov(\hbbeta) \preceq \Cov(\tbbeta)$ does not hold in general. 
    As a simple counter-example, consider the case $n=1$ and $p=2$, with $\bX = [1, ~ 0 ]$ 
    such that $\bX^{\dagger} = \bX^{\top}$.
    Letting $\tbbeta_{\alpha} = (\bX^{\top} + \alpha \be_2) \by$ for $\alpha \in \RR$ and $\hbbeta = \bX^{\top} \by$, we can easily verify that 
    \begin{align} 
    	\Cov\big( \tbbeta_{\alpha} \big) = \begin{bmatrix} 1 & \alpha \\ \alpha & \alpha^2 \end{bmatrix}, \quad \Cov\big( \hbbeta \big) = \begin{bmatrix} 1 & 0 \\ 0 & 0 \end{bmatrix}.
    \end{align}  
    Observe that 
    \begin{align}
    	\Cov\big( \tbbeta_{\alpha} \big) - \Cov\big( \hbbeta \big) = \begin{bmatrix} 0 & \alpha \\ \alpha & \alpha^2 \end{bmatrix}
    \end{align}
    is not positive semidefinite unless $\alpha = 0$.
\end{remark}

% Homoskedasticity
\begin{remark} [On the role of homoskedasticity] \label{remark:gauss.marko.homo}
Strictly speaking, Theorem~\ref{thm:Gauss_Markov}(b) does not require homoskedasticity, while Theorem~\ref{thm:Gauss_Markov}(a) in the classical regime does. 
We clarify this distinction and provide a more general formulation of Theorem~\ref{thm:Gauss_Markov}(b) in the Supplementary Material. 
\end{remark}

% variance estimator 
\subsection{Homoskedastic variance estimation}\label{sec:var_estimation}
Continuing in the homoskedastic framework, we now turn to the problem of estimating $\sigma^2$, a fundamental quantity underlying inference in regression problems and many tasks in statistics and machine learning more generally. 
In the classical regime, the in-sample residuals are often central to constructing an unbiased estimator of $\sigma^2$:
$\hsigma^2_{\text{in}} = (n-p)^{-1} \cdot \| \bP_{\bX}^\perp \by \|_2^2$. 
However, this approach is no longer viable in high-dimensions since the OLS interpolator perfectly fits the training data, yielding $\bP_{\bX}^\perp \by = \bzero$. 
To overcome this limitation, we leverage leave-one-out residuals, which remain informative in both classical and overparameterized regimes, to construct a variance estimator for $\sigma^2$. 
See Section~\ref{sec:loo.results} for a refresher on the leave-one-out configuration. 

% lemma 
\begin{theorem} \label{thm:var.estimator} 
Let Assumption~\ref{assump:lm} hold with $\bSigma = \sigma^2 \bI_n$. 
\begin{itemize}
		
	\item[(a)] \emph{Classical} $(n > p)$: 
	Let Assumption~\ref{assump:column_rank} hold. 
	Define 
	\begin{align}
		\hsigma^2 &= \frac{ \left\| \tbvarepsilon \right\|_2^2 }{ \big\| \big[ \diag(\bP_{\bX}^\perp)\big]^{-1} \cdot \bP_{\bX}^\perp \big\|_{\F}^2 }, \label{eq:var.estimator.classical} 
	\end{align}
	where $\bP_{\bX}^\perp$ and $\tbvarepsilon$ are defined as in \eqref{eq:loo.shortcut.classical}.	
	Then,
	\begin{align}
    		\bbE[\hsigma^2] = \sigma^2. \label{eq:bias.classical} 
    	\end{align} 
	
	\item[(b)] \emph{High-dimensional} $(n \le p)$: 
	Let Assumption~\ref{assump:row_rank} hold. 
	Define 
	\begin{align}
		\hsigma^2 &= \frac{ \left\| \tbvarepsilon \right\|_2^2 }{ \big\| \big[ \diag(\Gram_{\bX}) \big]^{-1} \cdot \Gram_{\bX} \big\|_{\F}^2 }, \label{eq:var.estimator.hd} 
	\end{align}
	where $\Gram_{\bX}$ and $\tbvarepsilon$ are defined as in \eqref{eq:loo.shortcut.hd}. 
	Then,
	\begin{align}
        \bbE[\hsigma^2] = \sigma^2 + \frac{ \big\| \bbE[\tbvarepsilon] \big\|_2^2}{ \big\| \big[ \diag(\Gram_{\bX}) \big]^{-1} \cdot \Gram_{\bX} \big\|_{\F}^2 }. \label{eq:bias.hd} 
    \end{align} 
\end{itemize}
\end{theorem}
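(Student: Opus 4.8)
The plan is to leverage the explicit linear representation of the leave-one-out residual vector provided by Corollary~\ref{cor:loo.shortcut}. In both regimes we may write $\tbvarepsilon = \bM \by$ for a fixed matrix $\bM$ that does not depend on $\by$: in the classical regime $\bM = [\diag(\bP_{\bX}^{\perp})]^{-1}\bP_{\bX}^{\perp}$, and in the high-dimensional regime $\bM = [\diag(\Gram_{\bX})]^{-1}\Gram_{\bX}$. In either case, $\bM$ is exactly the matrix whose squared Frobenius norm sits in the denominator of $\hsigma^2$, so that $\hsigma^2 = \|\bM\by\|_2^2 / \|\bM\|_{\F}^2$, and the whole problem reduces to computing $\bbE[\|\bM\by\|_2^2]$ under Assumption~\ref{assump:lm}. (Implicit in the statement is that the relevant diagonal matrix is invertible; for part~(b) this holds automatically since Assumption~\ref{assump:row_rank} makes $\Gram_{\bX}$ positive definite, and for part~(a) it is the standing nondegeneracy condition $H_{ii} < 1$ for all $i$.)

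Next I would substitute $\by = \bX\bbeta + \bvarepsilon$ and expand the quadratic form via a bias--variance decomposition:
\begin{align}
\bbE\big[\|\bM\by\|_2^2\big]
&= \bbE\big[(\bX\bbeta + \bvarepsilon)^{\top}\bM^{\top}\bM(\bX\bbeta+\bvarepsilon)\big] \\
&= \|\bM\bX\bbeta\|_2^2 + 2(\bX\bbeta)^{\top}\bM^{\top}\bM\,\bbE[\bvarepsilon] + \bbE\big[\bvarepsilon^{\top}\bM^{\top}\bM\bvarepsilon\big].
\end{align}
The cross term vanishes because $\bbE[\bvarepsilon] = \bzero$, and the final term equals $\tr\big(\bM^{\top}\bM\,\Cov(\bvarepsilon)\big) = \sigma^2\tr(\bM^{\top}\bM) = \sigma^2\|\bM\|_{\F}^2$ using $\Cov(\bvarepsilon) = \sigma^2\bI_n$. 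Since $\bM\bX\bbeta = \bbE[\bM\by] = \bbE[\tbvarepsilon]$, we conclude $\bbE[\|\bM\by\|_2^2] = \|\bbE[\tbvarepsilon]\|_2^2 + \sigma^2\|\bM\|_{\F}^2$, and dividing through by $\|\bM\|_{\F}^2$ gives precisely \eqref{eq:bias.hd}. This proves part~(b), and the bias term is manifestly nonnegative, which is the asserted conservativeness.

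For part~(a), it remains to show that the residual term disappears in the classical regime. Since $\bP_{\bX}^{\perp}$ is the orthogonal projection onto $\colsp(\bX)^{\perp}$, we have $\bP_{\bX}^{\perp}\bX = \bzero$, hence $\bM\bX = [\diag(\bP_{\bX}^{\perp})]^{-1}\bP_{\bX}^{\perp}\bX = \bzero$ and therefore $\bbE[\tbvarepsilon] = \bM\bX\bbeta = \bzero$. Substituting into the identity above collapses it to $\bbE[\hsigma^2] = \sigma^2$, which is \eqref{eq:bias.classical}.

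I do not anticipate a substantive obstacle: given Corollary~\ref{cor:loo.shortcut}, the argument is essentially a one-line expectation of a quadratic form in $\by$. The only point worth emphasizing is the conceptual reason the bias vanishes classically but not in high dimensions: the classical leave-one-out residual map factors through $\bP_{\bX}^{\perp}$ and so annihilates $\colsp(\bX)$, whereas in the overparameterized regime $\bP_{\bX} = \bI_n$, so $\Gram_{\bX}\bX \ne \bzero$ in general and $\bbE[\tbvarepsilon] = [\diag(\Gram_{\bX})]^{-1}\Gram_{\bX}\bX\bbeta$ (equivalently $= [\diag(\Gram_{\bX})]^{-1}\Gram_{\bX}\bX\bbeta^{*}$, since $\bX\bbeta = \bX\bbeta^{*}$) need not vanish --- this nonzero signal component is the source of the conservative bias.
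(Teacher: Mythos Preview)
Your proposal is correct and essentially mirrors the paper's own proof: both write $\tbvarepsilon$ as a fixed linear map applied to $\by$ via Corollary~\ref{cor:loo.shortcut}, compute the expectation of the resulting quadratic form using $\bbE[\bvarepsilon]=\bzero$ and $\Cov(\bvarepsilon)=\sigma^2\bI_n$, and then observe that $\bP_{\bX}^{\perp}\bX=\bzero$ kills the bias term in the classical regime while it survives as $\|\bbE[\tbvarepsilon]\|_2^2$ in the high-dimensional regime. Your unified treatment via a single matrix $\bM$ is slightly more streamlined than the paper's case-by-case presentation, but the mathematical content is identical.
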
 
Theorem~\ref{thm:var.estimator} shows that the leave-one-out residuals yield an unbiased estimator of $\sigma^2$ in the classical regime. 
To our knowledge, the estimator in \eqref{eq:var.estimator.classical} has not been explicitly documented in the literature; here, it arises naturally by re‑expressing the leave-one-out shortcut in a single quadratic form whose expectation can be evaluated exactly. 
In the high-dimensional regime, the analogous estimator in \eqref{eq:var.estimator.hd} remains well-defined but is conservative, exhibiting an upward bias: 
\begin{align}
    \frac{ \left\| \bbE[\tbvarepsilon] \right\|_2^2 }{ \left\| [ \diag(\Gram_{\bX}) ]^{-1} \cdot \Gram_{\bX} \right\|_{\F}^{2} },
    \qquad\text{where}\qquad
    \bbE[\tbvarepsilon] = \big[ \diag(\Gram_{\bX}) \big]^{-1} \cdot \Gram_{\bX} \cdot \bX \bbeta^*.  \label{eq:homo.var.bias}
\end{align}
The magnitude of this bias depends on the interplay between the design $\bX$ and the signal $\bbeta$, and vanishes under suitable structural assumptions. 
Consequently, \eqref{eq:var.estimator.hd} may provide a principled and robust means of estimating $\sigma^2$ even under perfect interpolation, potentially making it a natural candidate for valid inference in overparameterized settings such as those described in Example~\ref{ex:obs}. 
For completeness, we note that \eqref{eq:bias.classical} can also be expressed in the same form. 
However, in the classical regime, we have 
$\bbE[\tbvarepsilon] 
        = \big[\diag(\bP_{\bX}^\perp)\big]^{-1} \cdot (\bP_{\bX}^\perp) \cdot \bbE[\by]
        = \big[\diag^{-1}(\bP_{\bX}^\perp)\big]^{-1} \cdot \bP_{\bX}^\perp \cdot \bX \bbeta^*
        = \bzero,$
since $\bP_{\bX}^\perp \cdot \bX = \bzero$. 
Hence, the estimator in \eqref{eq:var.estimator.classical} is unbiased when $n > p$.

\begin{remark} [On some advantages of the leave-one-out approach] \label{rem:compare-naive} 
	Many conservative variance estimators are indeed possible. 
	For instance, the naive estimator $\tilde{\sigma}^2 = (1/n) \sum_{i=1}^n y_i^2$ also provides a conservative estimate of $\sigma^2$. 
    Under homoskedasticity with fixed $\bX$, 
    \[
        \bbE \left[\|\by\|_2^2\right]
            =\|\bX\bbeta\|_2^2 + n\sigma^2,
    \]
    so the upward bias equals $\|\bX\bbeta\|_2^2/n$. 
    This bias can be arbitrarily large and is insensitive to the design. 
    By contrast, $\hsigma^2$ in \eqref{eq:var.estimator.classical} and \eqref{eq:var.estimator.hd} has several desirable properties: 
    (i) an \emph{exact} finite‑sample expectation, unbiased in the classical regime and conservative in high‑dimensions with an explicit bias; %(Theorem~\ref{thm:var.estimator}); 
    (ii) \emph{design adaptivity} through the leave-one-out geometry, since it removes the in-rowspace component via the operators $\bP_{\bX}^\perp$ or $\Gram_{\bX}$; 
    (iii) \emph{continuity across the interpolation threshold} as it reduces to an unbiased estimator when $n>p$ and remains well‑defined when $p\ge n$; 
    and (iv) \emph{closed‑form computation} via the leave-one-out shortcuts in \eqref{eq:loo.shortcut.classical}-\eqref{eq:loo.shortcut.hd}, without requiring refitting $n$ separate models. 
\end{remark}

%\begin{remark} [Simulation studies]
%%
%In the Supplementary Material, we conduct illustrative simulations to study the finite-sample properties of our proposed variance estimator given by \eqref{eq:var.estimator.hd}. 
%%
%Our findings suggest that the structure in the covariance of $\bX$ plays a significant role in the performance of our variance estimator. 
%%
%In particular, covariates with approximately low-rank structures generally yield better performance; this finding aligns itself with the conclusions drawn in the seminal work of \cite{bartlett2020benign}, which argues that the OLS interpolator demonstrates benign overfitting when the covariates are approximately low-rank. 
%%
%\end{remark}

%\begin{remark} [Alternative variance estimator] 
%%
%In Appendix~\ref{sec:partial.reg.inf}, we present an alternative homoskedastic variance estimator under the partially regularized OLS setting of Section~\ref{sec:partial_regularization}. 
%%
%\end{remark}  

% In Section \ref{sec:simulations} to follow, we conduct simulations to assess the performance of the variance estimator in \eqref{eq:var.estimator.hd}. 

% % remark
% \begin{remark}[Partially regularized OLS]
% %
% In Appendix~\ref{sec:partial.reg.inf}, we state analogous inference results of Section~\ref{sec:stat.results} for the partially regularized OLS setting of Section~\ref{sec:partial_regularization}. 
% %
% \end{remark} 

\section{Simulation Studies} \label{sec:simulations.updated}

% Simulation studies
\subsection{Objectives} \label{sec:sim.objectives} 
In this section, we conduct simulation studies on (i) treatment effect estimation and (ii) homoskedastic variance estimation. 
For both set of studies, we consider different generative models for the covariates to study its role in our high-dimensional estimation tasks. 

% Covariate models
\subsection{Covariate models} \label{sec:sim.covariates} 
Consider two generating processes for the covariate matrix $\bX$. % \in \Rb^{n \times p}$. 
\begin{enumerate} %[label=(\alph*)]

% standard normal
\item \emph{Standard normal model.} 
Let $\bX$ be a random matrix whose entries are i.i.d. draws from a standard normal distribution. 

% spiked model
\item \emph{Spiked model} \citep{spiked_model}. 
Let $\bX = \bU \bSigma^{1/2}$, where $\bU \in \Rb^{n \times p}$ is a random matrix with orthonormal rows and $\bSigma = \sigma_x^2 \cdot ( \bI_p + \sum_{\ell=1}^k \lambda_\ell \bv_\ell \bv_\ell^\top ) \in \Rb^{p \times p}$ with $\sigma_x \in \Rb$, $\lambda_\ell \gg 1$, and $v_\ell$ sampled uniformly at random from the unit sphere of dimension $p-1$. 

\end{enumerate} 

% Treatment effect estimation
\subsection{Treatment Effect Estimation} \label{sec:sim.treatment}

% RCT
{\em Randomized experiment.}  
We first consider the randomized experiment of Example~\ref{ex:rand}. In the classical regime, \cite{lin_ols} fits separate OLS regressions in the treatment and control groups and estimates the average treatment effect (ATE) as the difference between the intercepts. To extend this idea to $p > n$, we penalize the coefficients on the covariates while leaving the intercept unregularized. We refer to this estimator as \texttt{OLS(partial)}. A key advantage of our framework is that Theorem~\ref{thm:partially_regularized_OLS} yields a closed-form expression for this estimator through \eqref{eq:fwl.j.hd.0} and \eqref{eq:fwl.jc.hd}.

We compare \texttt{OLS(partial)} with three alternatives: the \texttt{difference-in-means} estimator, which remains the standard benchmark in randomized experiments; \texttt{OLS(full)}, which penalizes all coefficients including the intercept; and \texttt{ridge} regression, which regularizes only the covariate coefficients. Figure~\ref{fig:rct} reports the resulting estimation bias over 1000 simulation replications. Across both covariate models, \texttt{OLS(partial)} and \texttt{ridge} perform best. Both attain essentially the same bias as \texttt{difference-in-means} while exhibiting substantially smaller empirical variance. These gains are especially pronounced under the spiked design. By contrast, \texttt{OLS(full)} performs poorly, with particularly severe bias under the standard normal model. Table~\ref{table:ridge} further shows that the data-driven ridge tuning parameters are consistently close to zero, indicating that the selected \texttt{ridge} estimator is effectively indistinguishable from \texttt{OLS(partial)}.

% obs study
{\em Observational study.} We next consider the observational study of Example~\ref{ex:obs}. The objective remains ATE estimation in the $p > n$ regime, but now under a different stochastic design. As in the randomized setting, we extend the canonical regression-adjustment by penalizing the coefficients on the covariates while leaving the treatment indicator and intercept unregularized. We again refer to this estimator as \texttt{OLS(partial)}, and obtain closed-form expressions from Theorem~\ref{thm:partially_regularized_OLS}.

We benchmark this estimator against \texttt{OLS(full)} and \texttt{ridge} regression. The results are reported in Figure~\ref{fig:obs}. The overall pattern closely mirrors that of the randomized experiment. \texttt{OLS(full)} exhibits substantial bias under both covariate models, whereas \texttt{OLS(partial)} and \texttt{ridge} remain nearly unbiased. Moreover, their efficiency gains are again most visible under the spiked design. As in the randomized setting, Table~\ref{table:ridge} shows that the \texttt{ridge} tuning parameters are selected as zero, so that ridge reduces to \texttt{OLS(partial)}.

% summary
{\em Takeaways.} Taken together, these two studies suggest that partial regularization provides a promising extension of classical regression adjustment to the high-dimensional regime. They also indicate that the performance of these procedures depends strongly on the structure of the covariates, an issue to which we return after studying variance estimation. Moreover, the fact that \texttt{ridge} consistently selects a near-zero penalty underscores its close connection with the OLS interpolator. Understanding this relationship more formally, for example through consistency and Gaussian approximation results in the spirit of \cite{zhang_ridge_1, zhang_ridge_2}, may help clarify when vanishing regularization is optimal. 

\begin{table} [t!]
    \centering
    \caption{Median \texttt{ridge} tuning parameter chosen via $5$-fold cross-validation.} 
    \begin{adjustbox}{max width=\textwidth}
    \begin{tabular} {@{}lcc@{}}%{c | c  | c | c | c  | c }
        \toprule
        						& Randomized experiment	& Observational study		\\
        \toprule
        Standard normal	& $8.50 \cdot 10^{-4}$ & 0			\\
        \hdashline
        Spiked		 	& $1.19 \cdot 10^{-5}$ & 0	\\ 
        \bottomrule
    \end{tabular}
    \end{adjustbox}
    \label{table:ridge}
\end{table} 

% RCTS
\begin{figure*}[t!]
	\centering  
        \subfloat[Standard normal model.]{\includegraphics[width=0.4\textwidth]{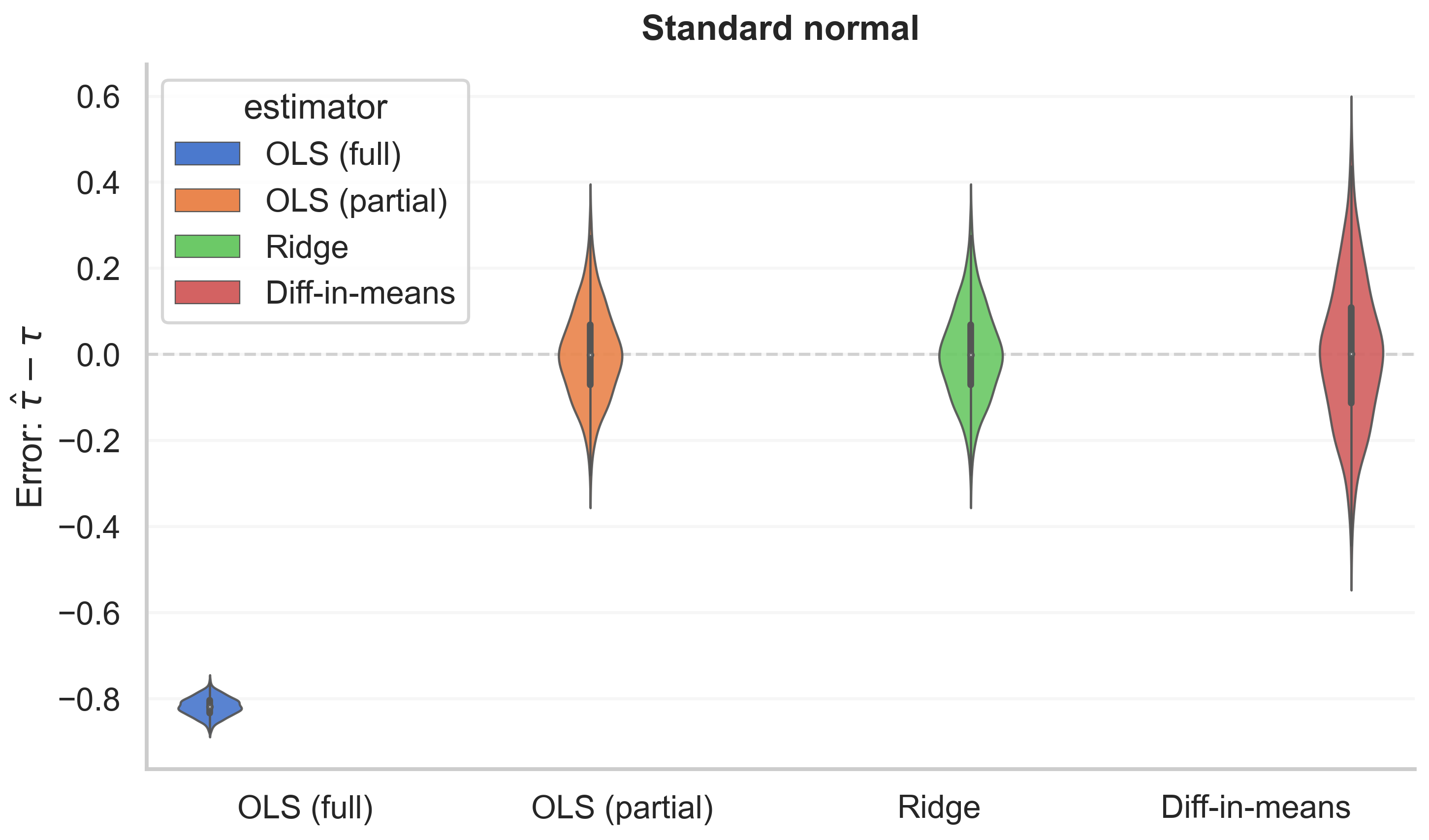}\label{fig:rct.normal}}
        \qquad \quad
        \subfloat[Spiked model.]{\includegraphics[width=0.4\textwidth]{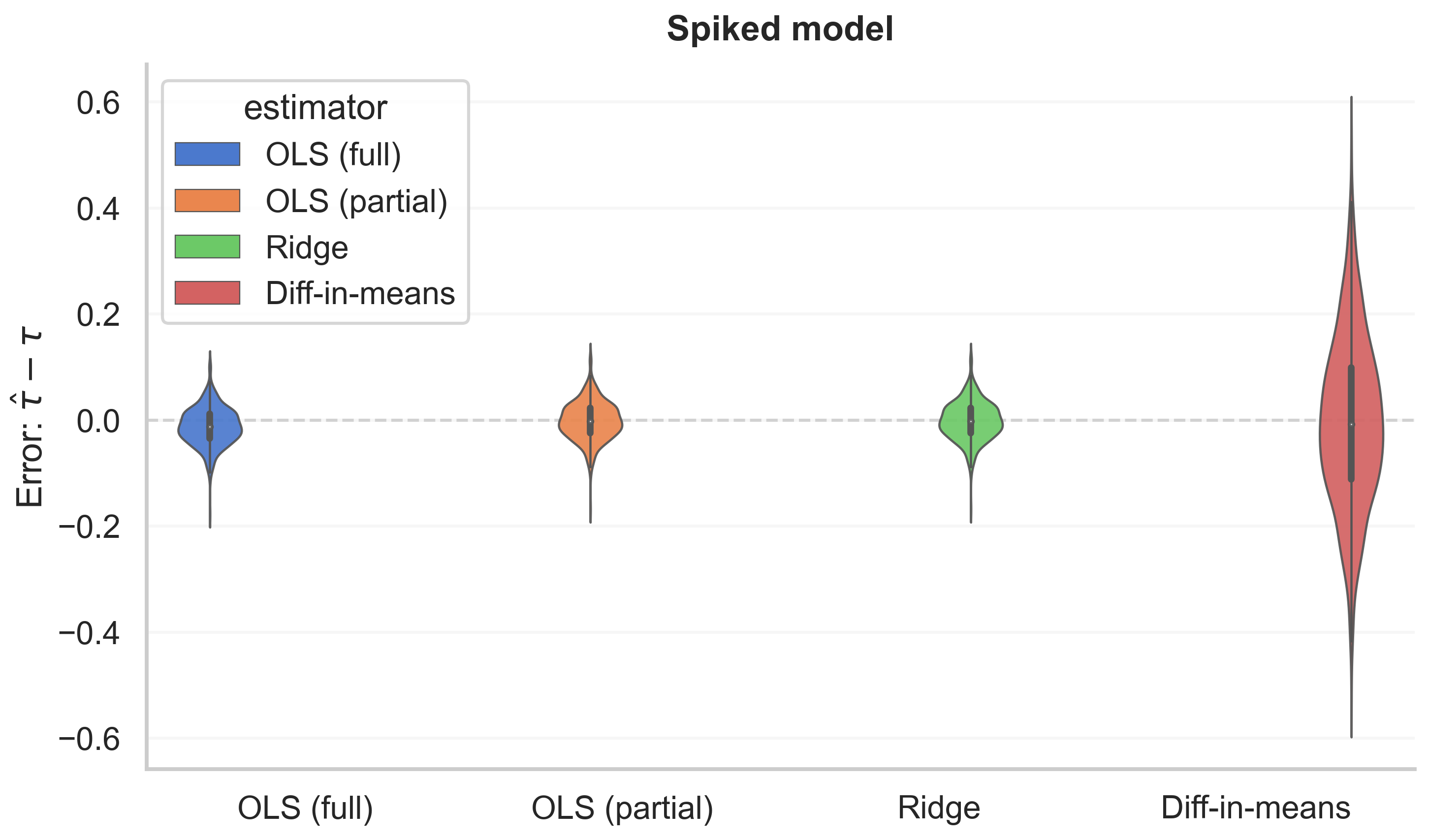}\label{fig:rct.spiked}} 
	\caption{Bias in ATE estimation under the randomized design (Example~\ref{ex:rand}): \texttt{OLS(partial)} and \texttt{ridge} remain nearly unbiased across both covariate models with clear efficiency gains over \texttt{difference-in-means}, particularly under the spiked design; by contrast, \texttt{OLS(full)} exhibits substantial bias, particularly under the standard normal design.}
	\label{fig:rct} 
\end{figure*}

% obs study
\begin{figure*}[t!]
	\centering  
        \subfloat[Standard normal model.]{\includegraphics[width=0.4\textwidth]{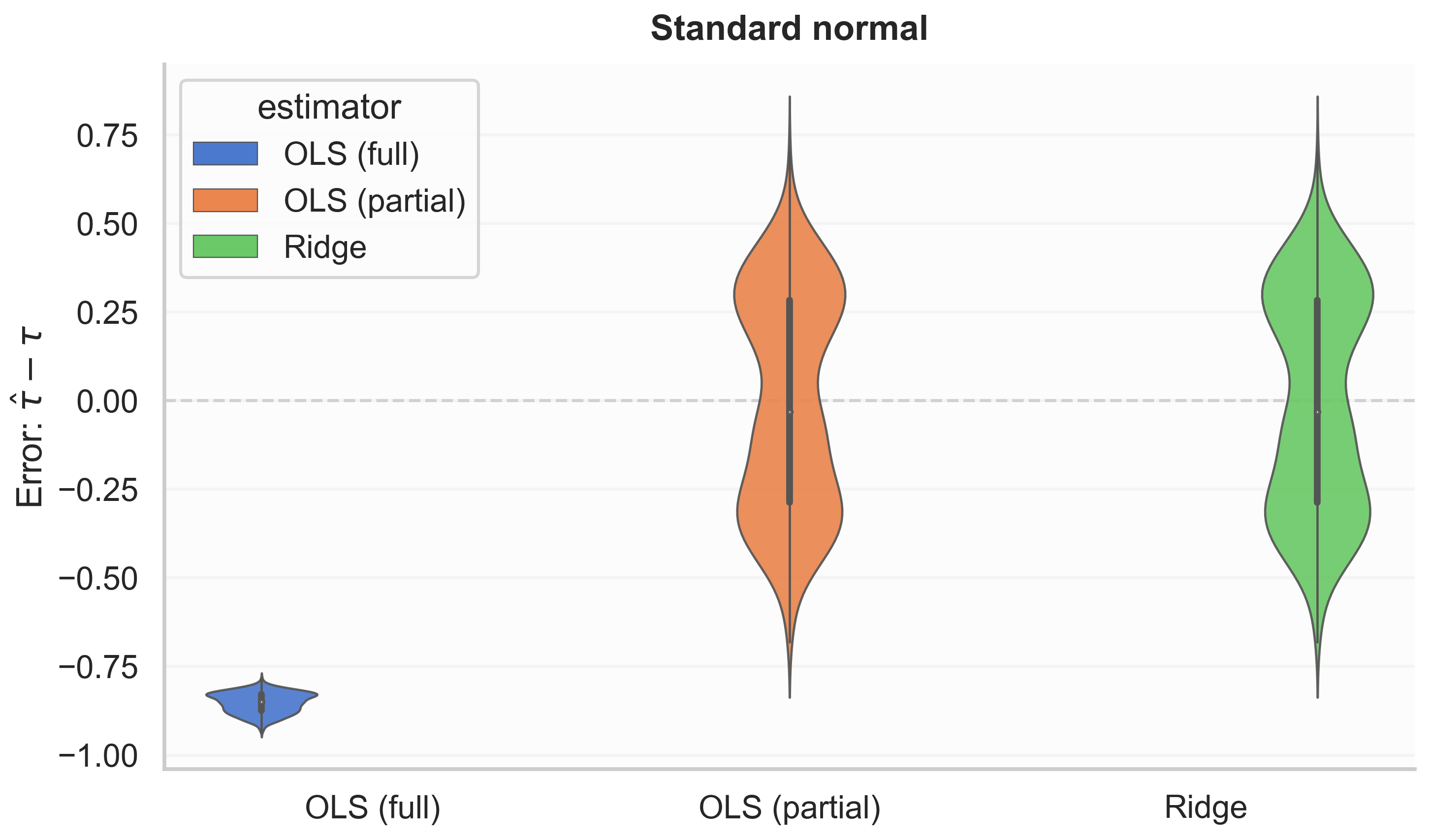}\label{fig:rct.normal}}
        \qquad \quad
        \subfloat[Spiked model.]{\includegraphics[width=0.4\textwidth]{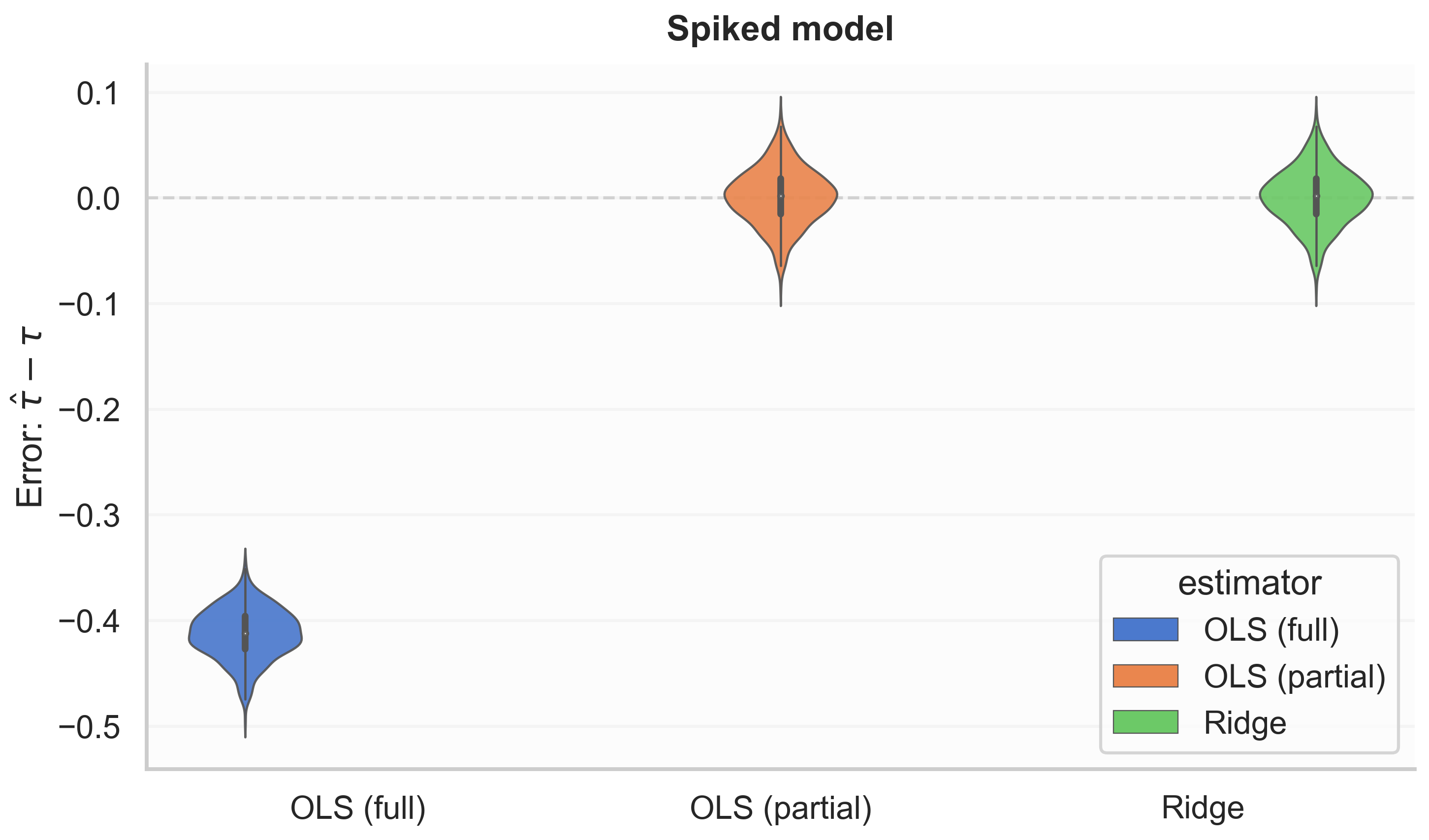}\label{fig:rct.spiked}} 
	\caption{Bias in ATE estimation under the observational design (Example~\ref{ex:obs}): \texttt{OLS(partial)} and \texttt{ridge} remain nearly unbiased with substantively smaller variances under the spiked design. Meanwhile, \texttt{OLS(full)} exhibits substantial bias across both covariate models.}
	\label{fig:obs} 
\end{figure*}

% VARIANCE ESTIMATION
\subsection{Homoskedastic Variance Estimation} \label{sec:sim.variance}
We now study the finite-sample performance of the homoskedastic variance estimator developed in \eqref{eq:var.estimator.hd} of Theorem~\ref{thm:var.estimator}.
See Supplementary Material for details of the simulation setup.  

% estimation bias
{\em Estimation bias.} 
We first examine the bias $\hsigma^2 - \sigma^2$ in a setting with fixed $p > n$ and increasing $n$. Figure~\ref{fig:bias} displays the resulting bias. Under the spiked model, the average bias remains close to zero throughout. Under the standard normal design, by contrast, the bias is consistently positive, indicating conservative behavior, which is in line with Theorem~\ref{thm:var.estimator}. The Supplementary Material reports analogous experiments under two additional regimes: fixed aspect ratio $p/n$ with (i) increasing $n$ and (ii) increasing $\sigma^2$. Across these settings, the same qualitative pattern persists. The bias appears to depend primarily on the design matrix $\bX$ and the parameter $\bbeta$, as suggested by \eqref{eq:bias.hd} and \eqref{eq:homo.var.bias}, with substantially more favorable performance under the spiked model.

% figure: fixed p varying n
\begin{figure*}[t!]
	\centering  
        \subfloat[Standard normal model.]{\includegraphics[width=0.35\textwidth]{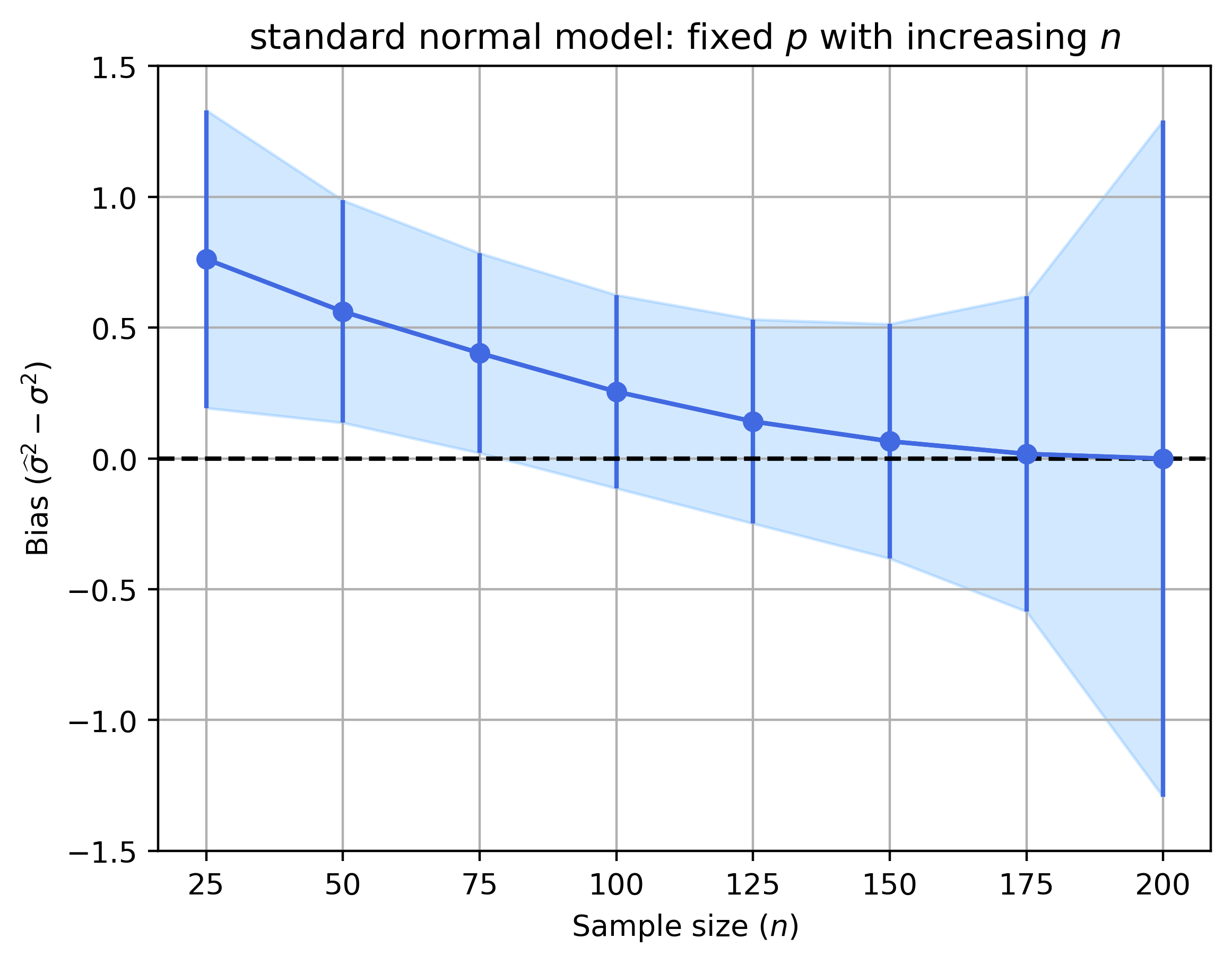}\label{fig:standard_normal}}
        \qquad \quad
        \subfloat[Spiked model.]{\includegraphics[width=0.35\textwidth]{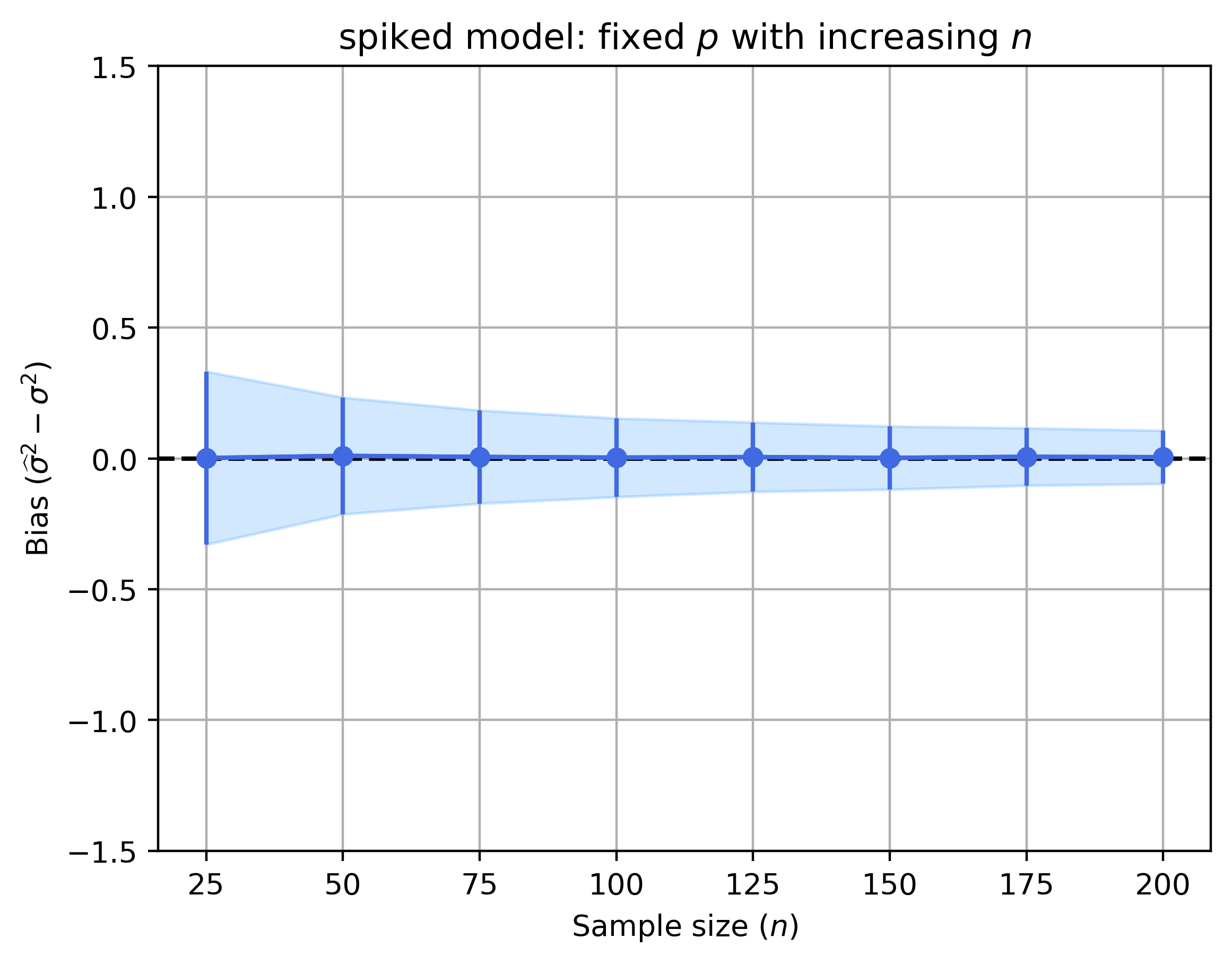}\label{fig:spiked}} 
	\caption{Simulation results of the biases of our variance estimator in \eqref{eq:var.estimator.hd} with fixed $p = 200$ and varying $n \in \{25, 50, 75, \dots, 175 \}$. The horizontal solid lines show the mean over $100$ trials, with shading and vertical bars to show $\pm$ one standard error.}
	\label{fig:bias} 
\end{figure*}

% coverage
{\em Coverage.} We next study the coverage properties of the resulting confidence intervals under different covariate and noise models for a fixed $p > n$ with increasing $n$. Specifically, we consider both standard normal and uniform noise. 
For $\alpha \in (0,1)$, we construct the interval 
\begin{align}
	\texttt{CI}_{\alpha} (\bx_{n+1}) = \left[ \bx^\top_{n+1} \hbbeta ~\pm~ z_{\alpha/2}  \cdot \sqrt{\hsigma^2 \cdot \bx_{n+1}^\top (\bX^\top \bX )^\dagger \bx_{n+1}}  \right], 
\end{align} 
where $z_{\alpha/2}$ is the upper $\alpha/2$ quantile of the standard normal distribution. %$\Normal(0,1)$.
%, and $\hy_{n+1} = \langle \bx_{n+1}, \hbbeta \rangle$ is the test prediction. 
%
Tables~\ref{table:coverage.normal} and \ref{table:coverage.spiked} report the empirical coverage probabilities and average interval lengths at the 90\% nominal level for the standard normal and spiked models with respect to $\bbE[y_{n+1}]$. 

Under the spiked model, the confidence intervals achieve coverage close to the nominal level across all sample sizes and both noise distributions. Under the standard normal model, however, the intervals tend to undercover, and their lengths become impractically large when $n \approx p$. This behavior suggests that successful variance estimation, like successful treatment effect estimation, depends critically on the geometric structure of the design.

\subsection{Connections with Benign Overfitting Prediction Literature}
Collectively, Sections~\ref{sec:sim.treatment} and \ref{sec:sim.variance} reveal a common pattern: performance is substantially better under the spiked model than under the standard normal design. In the treatment effect experiments, the spiked model yields more stable estimators and greater efficiency gains. In the variance estimation experiments, it leads to smaller bias and confidence intervals with coverage closer to the nominal level. These findings echo a central theme in the benign overfitting literature on prediction, suggesting that low-dimensional structure in $\bX$ is important not only for prediction, but also for parameter recovery and inference, and potentially for broader statistical objectives.

%Our findings are in line with \cite[Section 3.1]{Tsigler2020BenignOI}, which argues that structure in the covariance of $\bX$ is necessary to learn in the overparameterized setting. 
%%
%This explains our findings for the standard normal model since the span of $\bX$ is nearly orthogonal to $\bbeta$ w.h.p. and thus, $\bbeta$ cannot be measured in most directions. 
%%
%\cite[Footnote 2]{Tsigler2020BenignOI} also notes that their results suggest that only spiked-covariance-like models can exhibit benign overfitting, which aids in explaining the success of our variance estimator under the spiked model.

% STANDARD NORMAL
\begin{table} [t]
\footnotesize
\centering 
\caption{Standard normal coverage for 90\% confidence intervals across 10000 replications.} 
\begin{tabular}{@{}lcccc@{}}
\toprule
\midrule 
\multirow{2}{*}{Sample size}
& \multicolumn{2}{c}{Gaussian noise $\Normal(0,1)$}   
& \multicolumn{2}{c}{Uniform noise $\textsf{U}[0,1]$}   
\\ 
\cmidrule(l){2-3} 
\cmidrule(l){4-5} 
& Coverage probability  & Interval length 
& Coverage probability  & Interval length 
\\
\midrule 
$n = 25$   	 
& 0.644	& 1.580
& 0.458	& 1.337  	 
\\
\hdashline
$n = 50$    	 
& 0.722	& 2.346
& 0.575	& 1.799  
\\
\hdashline
$n = 75$    	 
& 0.824	& 3.088
& 0.679	& 2.174  
\\
\hdashline
$n = 100$   	 
& 0.849	& 3.645
& 0.719	& 2.406 
\\
\hdashline
$n = 125$   	 
& 0.863	& 4.513
& 0.884	& 2.922 
\\
\hdashline
$n = 150$   	 
& 0.880	& 5.715
& 0.838	& 3.496 
\\
\hdashline
$n = 175$   	 
& 0.869	& 8.389
& 0.803	& 4.728  
\\
\hdashline
$n = 200$   	 
& 0.859	& 109.644
& 0.894	& 292.171  
\\
\bottomrule
\end{tabular}
\label{table:coverage.normal}
\end{table}

% SPIKED
\begin{table} [t]
\footnotesize
\centering 
\caption{Spiked coverage for 90\% confidence intervals across 10000 replications.} 
\begin{tabular}{@{}lcccc@{}}
\toprule
\midrule 
\multirow{2}{*}{Sample size}
& \multicolumn{2}{c}{Gaussian noise $\Normal(0,1)$}   
& \multicolumn{2}{c}{Uniform noise $\textsf{U}[0,1]$}   
\\ 
\cmidrule(l){2-3} 
\cmidrule(l){4-5} 
& Coverage probability  & Interval length 
& Coverage probability  & Interval length 
\\
\midrule 
$n = 25$   	 
& 0.886	& 1.302
& 0.895	& 0.753  	 
\\
\hdashline
$n = 50$    	 
& 0.900	& 2.057
& 0.918	& 1.167  
\\
\hdashline
$n = 75$    	 
& 0.888	& 2.513
& 0.902	& 1.459  
\\
\hdashline
$n = 100$   	 
& 0.895	& 2.911
& 0.905	& 1.728 
\\
\hdashline
$n = 125$   	 
& 0.900	& 3.361
& 0.904	& 1.947 
\\
\hdashline
$n = 150$   	 
& 0.897	& 3.715
& 0.918	& 2.100 
\\
\hdashline
$n = 175$   	 
& 0.906	& 3.919
& 0.898	& 2.257  
\\
\hdashline
$n = 200$   	 
& 0.899	& 4.264
& 0.916	& 2.460  
\\
\bottomrule
\end{tabular}
\label{table:coverage.spiked}
\end{table}

\section{Conclusion} \label{sec:conclusion} 

Taken together, our findings shift attention from prediction risk---the focus of much of the benign overfitting literature---to parameter estimation and inference, which are central to causal inference and many other problems in statistics and the social sciences. At the same time, our simulations reinforce a recurring insight from the prediction literature: performance is especially favorable when the covariates exhibit spiked, approximately low-dimensional structure. More generally, our work contributes to the growing understanding of overparameterized models. 
In particular, it complements a broader recent movement in the literature to study benign overfitting beyond its original predictive framing. For example, \cite{ols_benign_bayesian} examined high-dimensional linear regression within a Bayesian setup and \cite{montanari_aos} showed that related benign overfitting phenomena can persist in classification. 
It also connects to a growing econometrics literature on overparameterized methods in panel-data settings, as in \cite{same_root} and \cite{spiess2023double}. Our contribution is different in focus but similar in spirit: within regression, we ask whether the favorable behavior of overparameterized models can extend beyond prediction to parameter recovery and uncertainty quantification. We hope the results developed here stimulate further research on when and why overparameterized methods remain statistically effective across a broader range of objectives, and help guide the development of methods that exploit these phenomena in practice.

\section*{Acknowledgements}
Dennis Shen and Dogyoon Song contributed equally.
We are sincerely grateful to the editor, three reviewers, Avi Feller,
Ben Recht, and Alexander Tsigler for their thoughtful feedback and helpful
suggestions. We note that an early version of the paper appeared on arXiv
under the title ``Algebraic and Statistical Properties of the Ordinary Least
Squares Interpolator.''

\newpage

\bibliographystyle{plainnat}
\bibliography{causal}

\newpage
\appendix

\begin{center}
  {\Large\textsc{Supplementary Material}}\\[4pt]
  {\large for ``Benign Overfitting Beyond Prediction:\
  The Ordinary Least Squares Interpolator''}
\end{center}
\vspace{12pt}

The supplementary material is structured as follows.
Section~\ref{apps.rows} details applications of the leave-one-out formula
(Corollary~\ref{cor:loo} in Section~\ref{sec:loo.results}).
Section~\ref{sec:gauss.markov.exp} provides additional commentary on
Remark~\ref{remark:gauss.marko.homo}.
Section~\ref{app:sims} details the simulation setup from
Section~\ref{sec:sim.variance} used to study the homoskedastic variance
estimator proposed in \eqref{eq:var.estimator.hd} of
Theorem~\ref{thm:var.estimator}, and provides complementary results.
To provide technical preliminaries,
Section~\ref{sec:pseudo_properties} overviews some properties of the
Moore--Penrose pseudoinverse.
Sections~\ref{sec:proofs.rows}, \ref{sec:proofs.columns}, and
\ref{sec:proofs.stats} contain the proofs of theorems, propositions, and
corollaries deferred from Sections~\ref{sec:row_partition},
\ref{sec:subsampled_column}, and \ref{sec:stat_inf}, respectively.

% Applications of the leave-one-out formula
% APPLICATIONS
\section{Applications of the leave-one-out OLS formula} \label{apps.rows} 
%
%We provide details on applications of our algebraic results from Section~\ref{sec:loo.results}.  
%%
%In Section \ref{sec:loo_residual}, we revisit the LOO OLS formula through the LOO prediction residuals. 
%%
%Thereafter, in Section \ref{sec:apps.rows} , we discuss their applications. 
%Specifically, Section \ref{sec:app.loo} focuses on point prediction and provides (1) a handy formula for the predicted residual error sum of squares (\PRESS) statistic, which is a popular cross-validation metric used to evaluate predictive capability and (2) an online update formula for the OLS interpolator for streaming data. 
%%
%In Section \ref{sec:app.jackknife}, we explore the connection between LOO residuals and the \jackknife{}. We also discuss predictive inference based on the \jackknife~and \jackknife+ \cite{jackknife+}. 

\subsection{Revisiting the leave-one-out OLS formula using leave-one-out residuals}\label{sec:loo_residual}
Before we formally state our example applications, we first obtain a formula for the leave-one-out OLS estimates expressed in terms of the leave-one-out residuals by combining Corollaries~\ref{cor:loo} and \ref{cor:loo.shortcut}. 
\begin{corollary} \label{cor:loo.beta.shortcut}
Let $\bX \in \Rb^{n \times p}$ and $\by \in \Rb^n$. 
\begin{itemize} %[label=(\alph*)]
		
	\item[(a)] \emph{Classical regime} $(n > p)$: If Assumption~\ref{assump:column_rank} holds, then for any $i \in [n]$,
	\begin{align}
		\hbbeta^{(\sim i)} - \hbbeta 
	   = - \tvarepsilon_i\cdot (\bX^\top \bX)^{-1} \bx_i, \label{eq:loo.redundant.classical}
	\end{align}
where $\tvarepsilon_i$ is the $i$-th coordinate of the leave-one-out residual vector $\tbvarepsilon = \big[\diag(\bP_{\bX}^\perp)\big]^{-1} \cdot \bP_{\bX}^\perp  \by$, as defined in \eqref{eq:loo.shortcut.classical}. 

	\item[(b)] \emph{High-dimensional regime} $(n \le p)$: If Assumption~\ref{assump:row_rank} holds, then for any $i \in [n]$,
	\begin{align}
		\hbbeta^{(\sim i)} - \hbbeta 
	   = - \tvarepsilon_i\cdot (\bX^\top \bX)^\dagger \bx_i, \label{eq:loo.redundant.hd}
	\end{align}
where $\tvarepsilon_i$ is the $i$-th coordinate of the leave-one-out residual vector $\tbvarepsilon = \big[ \diag(\Gram_{\bX}) \big]^{-1} \cdot \Gram_{\bX}  \by$, as defined in \eqref{eq:loo.shortcut.hd}. 
\end{itemize}
\end{corollary}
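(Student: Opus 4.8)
The plan is to derive Corollary~\ref{cor:loo.beta.shortcut} by plugging the ``shortcut'' expressions for the LOO residuals (Corollary~\ref{cor:loo.shortcut}) into the LOO OLS update formulas (Corollary~\ref{cor:loo}). Both regimes then reduce to bookkeeping, the only genuine computation being a pair of pseudoinverse identities in the high-dimensional case. I would treat the two regimes in turn.

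\emph{Classical regime.} By Corollary~\ref{cor:loo}(a), $\hbbeta^{(\sim i)} - \hbbeta = -\hvarepsilon_i \cdot (\bX^\top\bX)^{-1}\bx_i \,/\, \big(1 - \bx_i^\top(\bX^\top\bX)^{-1}\bx_i\big)$, where $\hvarepsilon_i = (\bP_{\bX}^\perp\by)_i$ is the $i$-th in-sample residual (using $\hbvarepsilon = \by - \bP_{\bX}\by = \bP_{\bX}^\perp\by$). I would then invoke Remark~\ref{rem:hat_matrix} to identify $(\bP_{\bX}^\perp)_{ii} = 1 - \bx_i^\top(\bX^\top\bX)^{-1}\bx_i$, so that the scalar multiplying $(\bX^\top\bX)^{-1}\bx_i$ is precisely $(\bP_{\bX}^\perp\by)_i / (\bP_{\bX}^\perp)_{ii}$, which is the $i$-th coordinate $\tvarepsilon_i$ of $\tbvarepsilon = [\diag(\bP_{\bX}^\perp)]^{-1}\bP_{\bX}^\perp\by$ from \eqref{eq:loo.shortcut.classical}. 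Substituting yields \eqref{eq:loo.redundant.classical} immediately.

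\emph{High-dimensional regime.} Starting from \eqref{eq:beta.loo.hd.2} in Corollary~\ref{cor:loo}(b), I would rewrite the rank-one update as $\hbbeta^{(\sim i)} - \hbbeta = -(\bX^\top\bX)^\dagger\bx_i \cdot \big(\bx_i^\top(\bX^\top\bX)^\dagger\hbbeta\big)\,/\,\big(\bx_i^\top[(\bX^\top\bX)^\dagger]^2\bx_i\big)$, so the remaining task is to show the scalar factor equals $\tvarepsilon_i = \be_i^\top\Gram_{\bX}\by \,/\, \big(\be_i^\top\Gram_{\bX}\be_i\big)$ from \eqref{eq:loo.shortcut.hd}. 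Writing $\bx_i = \bX^\top\be_i$ and taking a compact SVD $\bX = \bU\bS\bV^\top$ (with $\bU$ orthogonal since $\rank(\bX) = n$ under Assumption~\ref{assump:row_rank}), I would verify the two identities $\bX(\bX^\top\bX)^\dagger\bX^\dagger = (\bX\bX^\top)^{-1} = \Gram_{\bX}$ and $\bX[(\bX^\top\bX)^\dagger]^2\bX^\top = (\bX\bX^\top)^{-1} = \Gram_{\bX}$, each of which collapses to $\bU\bS^{-2}\bU^\top$ after cancellation. Substituting $\hbbeta = \bX^\dagger\by$ then gives $\bx_i^\top(\bX^\top\bX)^\dagger\hbbeta = \be_i^\top\Gram_{\bX}\by$ and $\bx_i^\top[(\bX^\top\bX)^\dagger]^2\bx_i = \be_i^\top\Gram_{\bX}\be_i$, so the scalar is exactly $\tvarepsilon_i$ and \eqref{eq:loo.redundant.hd} follows. (Equivalently, one could begin from the projection form \eqref{eq:beta.loo.hd} and use $(\bX^\top\bX)^\dagger\bx_i = \bX^\dagger\be_i$.)

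I expect the only mild obstacle to be the two pseudoinverse identities in the high-dimensional step; these are routine from the SVD and can also be read off from the Moore--Penrose properties collected in Appendix~\ref{sec:pseudo_properties} (e.g.\ $(\bX^\top\bX)^\dagger = \bX^\dagger(\bX^\dagger)^\top$ together with $\bP_{\bX} = \bI_n$ under Assumption~\ref{assump:row_rank}). Everything else is substitution.
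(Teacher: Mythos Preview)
Your proposal is correct and follows essentially the same approach as the paper: combine the LOO update (Corollary~\ref{cor:loo}) with the shortcut residual formula (Corollary~\ref{cor:loo.shortcut}). The only cosmetic difference is that the paper starts from the form \eqref{eq:beta.loo.hd} and uses $\bG_{\bX} = (\bX^\dagger)^\top \bX^\dagger$ together with $\bX^\dagger \be_i = (\bX^\top\bX)^\dagger \bx_i$ directly, whereas your main route starts from \eqref{eq:beta.loo.hd.2} and verifies the needed identities via the SVD; you already note the paper's route as your parenthetical alternative.
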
 

\begin{proof} %[Proof of Corollary \ref{cor:loo.beta.shortcut}]
The formula for the classical regime is well known and can be derived by plugging \eqref{eq:loo.shortcut.classical} into \eqref{eq:beta.loo.classical} with $\tvarepsilon_i = (1 - H_{ii})^{-1} \hvarepsilon_i$, where $H_{ii} = \bx_i^\top (\bX^\top \bX)^{-1} \bx_i$. 
Similarly in high-dimensions, we combine \eqref{eq:beta.loo.hd} with \eqref{eq:loo.shortcut.hd} to obtain that for any $i \in [n]$,
\begin{align}
	\hbbeta^{(\si)} - \hbbeta 
        &= - \frac{\bX^{\dagger} \cdot \be_i \be_i^\top \cdot \bX^{\dagger,\top}}{\be_i^\top \cdot \bG_{\bX} \cdot \be_i} \hbbeta\\
        &= -\bX^{\dagger} \be_i \cdot \frac{\be_i^\top \cdot \bG_{\bX} \cdot \by}{\be_i^\top \cdot \bG_{\bX} \cdot \be_i}
            &&\because \hbbeta = \bX^{\dagger} \by ~\text{and}~ \bG_{\bX} = (\bX^{\dagger})^\top \bX^{\dagger}\\
        &= - (\bX^\top \bX)^\dagger \bx_i \cdot \frac{\be_i^\top \cdot \bG_{\bX} \cdot \by}{ \diag( \bG_{\bX} )_{ii}}
            &&\because \bX^{\dagger} \be_i = (\bX^\top \bX)^\dagger \bx_i\\
        &= - (\bX^\top \bX)^\dagger \bx_i \cdot \tvarepsilon_i. 
\end{align}
\end{proof} 

Corollary~\ref{cor:loo.beta.shortcut} establishes that the difference between the leave-$i$-out and full-sample minimum $\ell_2$-norm OLS estimators not only can be expressed via the leave-$i$-out residuals, but also maintains the same formulation in both data regimes.  
While \eqref{eq:loo.redundant.classical} and \eqref{eq:loo.redundant.hd} might seem circular since the leave-one-out residuals are computed from the leave-one-out OLS estimates by definition, they are useful in deriving subsequent results in Sections~\ref{sec:app.loo} and \ref{sec:app.jackknife}. 

%Section~\ref{sec:apps.rows}.

% % APPLICATIONS 
% \subsection{Applications toward the generalization performance of OLS} \label{sec:apps.rows} 
% %
% In Section \ref{sec:app.loo}, we showcase the value of LOO prediction residuals in contexts of point prediction. 
% In Section \ref{sec:app.jackknife}, we explore the connection between the LOO residuals and \jackknife{} estimation, and discuss its utility for predictive inference. 

% LOO RESIDUALS 
%subsub
\subsection{Point prediction} \label{sec:app.loo} 
\noindent \textit{\PRESS~statistic for cross-validation.} 
In many settings, the researcher would like to estimate the performance of a model on out-of-sample data. 
For regression analysis, the predicted residual error sum of squares (\PRESS) statistic, which is a type of cross-validation metric that is popularly used to judge the predictive capability of the model by providing a summary measure of the model's fit to observations that were not themselves used during training. 
To compute \PRESS, a researcher would systematically hold out each data pair $(\bx_i, y_i)$ and learn a model on the remaining datapoints; 
then, the leave-$i$-th out residual is evaluated between $y_i$ and the prediction of the  model applied to $\bx_i$. 

Recall that $\tbvarepsilon \in \RR^n$ is the leave-one-out residual vector such that $\tvarepsilon_i\coloneqq y_i - \bx_i^{\top} \hbbeta^{(\sim i)}$ denote the leave-$i$-out prediction residuals. 
Then, \PRESS~is calculated as follows: 
\begin{align} \label{eq:press} 
	\PRESS &\coloneqq \sum_{i=1}^n \tvarepsilon_i^2 = \| \tbvarepsilon \|_2^2. 
\end{align} 
Based on Corollary \ref{cor:loo.shortcut}, we obtain a convenient computational formula for \PRESS.

\begin{corollary} \label{cor:press} 
    Let $\bX \in \Rb^{n \times p}$ and $\by \in \Rb^n$. 
    \begin{itemize} %[label=(\alph*)]
		
	\item[(a)] \emph{Classical regime} $(n > p)$: If Assumption~\ref{assump:column_rank} holds, then  
	\begin{align} \label{eq:loo.classical} 
		{\normalfont\PRESS} = \by^{\top} \cdot \left( \bP_{\bX}^\perp \cdot \big[\diag(\bP_{\bX}^\perp)\big]^{-1} \cdot \big[\diag(\bP_{\bX}^\perp)\big]^{-1} \cdot \bP_{\bX}^\perp \right) \cdot \by. 
	\end{align} 
	
	\item[(b)] \emph{High-dimensional regime} $(n \le p)$: If Assumption~\ref{assump:row_rank} holds, then  
	\begin{align} \label{eq:loo.hd} 
		{\normalfont\PRESS} = \by^{\top} \cdot \left( \Gram_{\bX} \cdot \big[ \diag(\Gram_{\bX}) \big]^{-1} \cdot \big[ \diag(\Gram_{\bX}) \big]^{-1} \cdot \Gram_{\bX} \right) \cdot \by
	\end{align} 
	\end{itemize}
\end{corollary}

% ONLINE REGRESSION
\textit{Gauss updating formula for online regression.}
Consider the online setting in which data points arrive sequentially streaming. 
In particular, suppose we construct $\hbbeta^{(n)} \in \Rb^p$ from the first $n$ data points, which are recorded in $\bX^{(n)} \in \RR^{n \times p}$ and $\by^{(n)} \in \RR^n$. 
Then our task is to update $\hbbeta^{(n)}$ with the novel datapoint $(\bx_{n+1}, y_{n+1})$ to obtain $\hbbeta^{(n+1)}$. 
Below, we provide a formula to perform this update.

% corollary
\begin{corollary} \label{cor:online}
Let $\bX^{(n)} \in \Rb^{n \times p}$ and $\by^{(n)} \in \Rb^n$.
Let $(\bx_{n+1}, y_{n+1})$ denote a novel datapoint. 

\begin{itemize} %[label=(\alph*)]
		
	\item[(a)] \emph{Classical regime} $(n > p)$: If Assumption~\ref{assump:column_rank} holds, then  
\begin{align} \label{eq:online.classical} 
	\hbbeta^{(n+1)} &= \hbbeta^{(n)} + \tvarepsilon_{n+1} \cdot \bgamma^{(n+1)} , 
\end{align} 
where $\tvarepsilon_{n+1} = y_{n+1} - \bx_{n+1}^\top \hbbeta^{(n)}$ and $\bgamma^{(n+1)} = \left({\bX^{(n+1)}}^\top \bX^{(n+1)}\right)^{-1} \bx_{n+1}$. 

	\item[(b)] \emph{High-dimensional regime} $(n \le p)$: If Assumption~\ref{assump:row_rank} holds, then  
\begin{align} \label{eq:online.hd} 
	\hbbeta^{(n+1)} &= \hbbeta^{(n)} + \tvarepsilon_{n+1} \cdot \bgamma^{(n+1)} , 
\end{align} 
where $\tvarepsilon_{n+1} = y_{n+1} - \bx_{n+1}^\top \hbbeta^{(n)}$ and $\bgamma^{(n+1)} = \left( {\bX^{(n+1)}}^\top \bX^{(n+1)} \right)^\dagger \bx_{n+1}$. 
\end{itemize}
\end{corollary}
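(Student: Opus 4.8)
The plan is to recognize the online update as the leave-one-out deletion step of Corollary~\ref{cor:loo.beta.shortcut} read in reverse, so that the identity follows with essentially no new computation. First I would stack the existing sample and the incoming datapoint into the augmented design
\[
    \bX^{(n+1)} = \begin{bmatrix} \bX^{(n)} \\ \bx_{n+1}^{\top} \end{bmatrix} \in \Rb^{(n+1) \times p},
    \qquad
    \by^{(n+1)} = \begin{bmatrix} \by^{(n)} \\ y_{n+1} \end{bmatrix} \in \Rb^{n+1},
\]
and record the two identifications $\hbbeta^{(n+1)} = \olsmin\big( \bX^{(n+1)}, \by^{(n+1)} \big)$ and $\hbbeta^{(n)} = \olsmin\big( \bX^{(n+1)}_{\sim(n+1)}, \by^{(n+1)}_{\sim(n+1)} \big)$. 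In the notation of Section~\ref{sec:loo.results} applied to the $(n+1)$-row matrix $\bX^{(n+1)}$, the new estimator $\hbbeta^{(n+1)}$ is the ``full-sample'' OLS estimator while the old estimator $\hbbeta^{(n)}$ is exactly the leave-$(n+1)$-out estimator $\hbbeta^{(\sim(n+1))}$.

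Next I would invoke Corollary~\ref{cor:loo.beta.shortcut} for the matrix $\bX^{(n+1)}$ with deleted index $i = n+1$. In the classical regime this produces $\hbbeta^{(n)} - \hbbeta^{(n+1)} = -\,\tvarepsilon_{n+1} \cdot \big( {\bX^{(n+1)}}^{\top} \bX^{(n+1)} \big)^{-1} \bx_{n+1}$, and in the high-dimensional regime the same relation with the pseudoinverse $\big( {\bX^{(n+1)}}^{\top} \bX^{(n+1)} \big)^{\dagger}$ in place of the inverse; in both cases $\tvarepsilon_{n+1}$ is the leave-$(n+1)$-out prediction residual, which by definition is $y_{n+1} - \bx_{n+1}^{\top} \hbbeta^{(n)}$ --- precisely the scalar appearing in the statement. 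Solving for $\hbbeta^{(n+1)}$ then gives $\hbbeta^{(n+1)} = \hbbeta^{(n)} + \tvarepsilon_{n+1} \cdot \bgamma^{(n+1)}$ with $\bgamma^{(n+1)}$ as defined, which is the claimed formula in each regime.

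The only step that requires care is checking that the structural hypothesis of Corollary~\ref{cor:loo.beta.shortcut} applies to the \emph{augmented} matrix $\bX^{(n+1)}$ rather than merely to $\bX^{(n)}$. In the classical regime this is automatic: appending one row cannot decrease the rank, so $\rank(\bX^{(n)}) = p$ forces $\rank(\bX^{(n+1)}) = p$, i.e.\ Assumption~\ref{assump:column_rank} holds for $\bX^{(n+1)}$. In the high-dimensional regime one genuinely needs $\rank(\bX^{(n+1)}) = n+1$ (equivalently $n+1 \le p$ and $\bx_{n+1} \notin \rowsp(\bX^{(n)})$), which is what Assumption~\ref{assump:row_rank} must be read as asserting for the augmented design; I would make this explicit. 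Beyond this bookkeeping I foresee no real obstacle --- the statement is a direct corollary of the leave-one-out formula with the ``before'' and ``after'' samples interchanged, and one could equally well route the argument through Corollaries~\ref{cor:loo} and \ref{cor:loo.shortcut} directly.
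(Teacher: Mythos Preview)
Your proposal is correct and matches the paper's approach essentially verbatim: the paper's proof simply says to view $\bX^{(n+1)}$ as the full data so that $\hbbeta^{(n)}$ becomes the leave-$(n+1)$-out estimator, and then invokes Corollaries~\ref{cor:loo} and \ref{cor:loo.beta.shortcut}. Your additional remark about needing Assumption~\ref{assump:row_rank} to hold for the \emph{augmented} matrix $\bX^{(n+1)}$ is a valid point of care that the paper leaves implicit.
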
 

\begin{proof} %[Proof of Corollary \ref{cor:online}]
If we view the first $n+1$ data points as the full data, i.e., $\bX = \bX^{(n+1)}$ and $\by = \by^{(n+1)}$, then $\hbbeta^{(n)}$ is the leave-$(n+1)$-out solution. 
Applying Corollaries~\ref{cor:loo} and \ref{cor:loo.beta.shortcut} gives the desired result. 
\end{proof} 

In words, Corollary~\ref{cor:online} states that the adjustment from $\hbbeta^{(n)}$ to $\hbbeta^{(n+1)}$ is related to the predicted residual $\tvarepsilon_{n+1}$.  
If the predicted residual for the $(n+1)$-th observation is large, then the adjustment is consequentially large; at the same time, if the predicted residual is zero, then no adjustment is necessary. 
Notably, the adjustment takes the same form in both data regimes. 

Finally, we note that \eqref{eq:online.classical} and \eqref{eq:online.hd} can be efficiently computed by first viewing ${\bX^{(n+1)}}^\top \bX^{(n+1)}$ as the rank-one update of  ${\bX^{(n)}}^\top \bX^{(n)}$, i.e.,
${\bX^{(n+1)}}^\top \bX^{(n+1)} =  {\bX^{(n)}}^\top \bX^{(n)}+ \bx_{n+1} \bx_{n+1}^\top$, and then applying the Sherman-Morrison formula to \eqref{eq:online.classical} and its generalization for pseudoinverses \cite{gen_inverse_meyers} to \eqref{eq:online.hd}; 
see Lemma~\ref{lemma:meyer} in Section~\ref{sec:proof.alt.beta.loo} for the latter result.

%Finally, observe that the pair of equations \eqref{eq:beta.loo.hd} and \eqref{eq:loo.shortcut.hd} in the high-dimensional regime, as well as \eqref{eq:beta.loo.classical} and \eqref{eq:loo.shortcut.classical} in the classical regime, each imply that 
%%
%\begin{align} \label{eq:beta.gap} 
%	\hbbeta^{(\sim i)} - \hbbeta 
%	= \tvarepsilon_i\cdot (\bX^\top \bX)^\dagger \bx_i,
%\end{align} 
%%
%where $\tvarepsilon_i$ is defined appropriately for each regime. 
%%

% JACKKNIFE
%subsub
\subsection{Inference under the \jackknife~and \jackknife+} \label{sec:app.jackknife} 
Whereas Section~\ref{sec:app.loo} provided an approach to evaluate the predictive capability of the OLS interpolator and update it through its leave-one-out residuals, this subsection focuses on variance estimation and predictive inference. 
In particular, we consider the \jackknife, which is closely connected to the leave-one-out configuration. %a general resampling strategy proposed by Quenouille and popularized by Tukey. 
% 
%As we will see, the \jackknife~is closely connected to the LOO configuration. 

% jackknife 
% \noindent \textbf{The \jackknife~for $\hbbeta$.} 
\medskip \noindent
\textit{The \jackknife~for $\hbbeta$.} 
We begin by discussing the \jackknife~procedure for $\hbbeta$. 

\medskip
\noindent \textit{I: Point estimator}.  
The \jackknife~estimate of $\hbbeta$ is defined as 
\begin{align}\label{eqn:jackknife}
	\hbbeta^{\J} = n \hbbeta - (n-1) \tbbeta, 
\end{align} 
where $\tbbeta = n^{-1} \sum_{i=1}^n \hbbeta^{(\sim i)}$. 
We can also construct the $i$-th leave-one-out pseudo-value as $\tbbeta^{(i)} = n \hbbeta - (n-1) \hbbeta^{(\sim i)}$. 
In turn, we can rewrite \eqref{eqn:jackknife} as the empirical mean of the $n$ leave-one-out pseudo-values, i.e., 
\begin{align}\label{eqn:jackknife.pseudo}
    \hbbeta^{\J} &= \frac{1}{n} \sum_{i=1}^n \tbbeta^{(i)}. 
\end{align} 
Given the connection between the \jackknife~and the leave-one-out configuration, we investigate the leave-one-out residuals through the lens of the \jackknife~estimate of the OLS minimum $\ell_2$-norm estimator in the corollary below. 
A proof of Corollary \ref{cor:loo.jackknife.1} is provided in Section \ref{sec:proof_jackknife.1}. 

% COR 1
\begin{corollary} \label{cor:loo.jackknife.1} 
Let $\bX \in \Rb^{n \times p}$ and $\by \in \Rb^n$. 
\begin{itemize} %[label=(\alph*)]
% classical 
	\item[(a)] \emph{Classical regime} $(n > p)$: If Assumption~\ref{assump:column_rank} holds, then
\begin{align} \label{eq:loo.jack.classical}
	\left\{ \bP_{\bX} - \frac{n}{n-1} \cdot \diag\left(\bP_{\bX}^\perp \right) \right\} \cdot \tbvarepsilon &= \frac{n}{n-1} \cdot \left( \bX \hbbeta^{\text{\normalfont Jack}} - \by \right), 
\end{align}
where the leave-one-out residual vector $\tbvarepsilon = \big[\diag(\bP_{\bX}^\perp)\big]^{-1} \cdot \bP_{\bX}^\perp  \by$, as defined in \eqref{eq:loo.shortcut.classical}.

\item[(b)] \emph{High-dimensional regime} $(n \le p)$: If Assumption~\ref{assump:row_rank} holds, then 
\begin{align} \label{eq:loo.jack.hd} 
	\tbvarepsilon =  \frac{n}{n-1} \cdot \left(\bX \hbbeta^{\text{\normalfont Jack}} - \by \right),
\end{align}  
where the leave-one-out residual vector $\tbvarepsilon = \big[ \diag(\Gram_{\bX}) \big]^{-1} \cdot \Gram_{\bX}  \by$, as defined in \eqref{eq:loo.shortcut.hd}. 
\end{itemize}
\end{corollary}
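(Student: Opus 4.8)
The plan is to reduce everything to the leave-one-out formulas already established, namely the shortcut identity in Corollary~\ref{cor:loo.beta.shortcut} together with the LOO residual formula in Corollary~\ref{cor:loo.shortcut}. First I would unfold the definition of the jackknife estimate: since $\hbbeta^{\J} = n\hbbeta - (n-1)\tbbeta$ with $\tbbeta = n^{-1}\sum_{i=1}^n \hbbeta^{(\sim i)}$, we have $\hbbeta^{\J} - \hbbeta = -(n-1)(\tbbeta - \hbbeta) = -\tfrac{n-1}{n}\sum_{i=1}^n\big(\hbbeta^{(\sim i)} - \hbbeta\big)$. Plugging in Corollary~\ref{cor:loo.beta.shortcut} turns each summand into $-\tvarepsilon_i\,(\bX^\top\bX)^{-1}\bx_i$ in the classical regime and $-\tvarepsilon_i\,(\bX^\top\bX)^{\dagger}\bx_i$ in the high-dimensional regime, so $\hbbeta^{\J} - \hbbeta$ becomes a simple $\tvarepsilon_i$-weighted combination of the vectors $(\bX^\top\bX)^{\dagger}\bx_i$. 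The target quantity can be written as $\bX\hbbeta^{\J} - \by = \bX(\hbbeta^{\J} - \hbbeta) - \hbvarepsilon$, so it remains to evaluate $\bX(\bX^\top\bX)^{\dagger}\bx_i$ and handle the in-sample residual term $\hbvarepsilon$.

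For the high-dimensional regime I would use two facts valid under Assumption~\ref{assump:row_rank}: the interpolation property gives $\hbvarepsilon = \by - \bX\hbbeta = \bzero$, and $\bP_{\bX} = \bI_n$. Combining $(\bX^\top\bX)^{\dagger}\bx_i = \bX^{\dagger}\be_i$ (observed in the discussion following Corollary~\ref{cor:loo}) with $\bX\bX^{\dagger} = \bP_{\bX} = \bI_n$ yields $\bX(\bX^\top\bX)^{\dagger}\bx_i = \be_i$. Hence $\bX\hbbeta^{\J} - \by = \bX(\hbbeta^{\J} - \hbbeta) = \tfrac{n-1}{n}\sum_{i=1}^n \tvarepsilon_i \be_i = \tfrac{n-1}{n}\,\tbvarepsilon$, which rearranges to $\tbvarepsilon = \tfrac{n}{n-1}(\bX\hbbeta^{\J} - \by)$, i.e.\ \eqref{eq:loo.jack.hd}.

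For the classical regime the structure is identical but the in-sample residual no longer vanishes. Here $\bX(\bX^\top\bX)^{-1}\bx_i = \bX(\bX^\top\bX)^{-1}\bX^\top\be_i = \bP_{\bX}\be_i$, so $\bX(\hbbeta^{\J} - \hbbeta) = \tfrac{n-1}{n}\bP_{\bX}\tbvarepsilon$. Since $\hbvarepsilon = \bP_{\bX}^{\perp}\by$, Corollary~\ref{cor:loo.shortcut} gives $\hbvarepsilon = \diag(\bP_{\bX}^{\perp})\,\tbvarepsilon$. Substituting both into $\bX\hbbeta^{\J} - \by = \bX(\hbbeta^{\J} - \hbbeta) - \hbvarepsilon$ and multiplying through by $\tfrac{n}{n-1}$ produces $\tfrac{n}{n-1}(\bX\hbbeta^{\J} - \by) = \{\bP_{\bX} - \tfrac{n}{n-1}\diag(\bP_{\bX}^{\perp})\}\,\tbvarepsilon$, which is \eqref{eq:loo.jack.classical}.

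I expect no serious obstacle here: once Corollaries~\ref{cor:loo.beta.shortcut} and \ref{cor:loo.shortcut} are in hand, the argument is essentially bookkeeping. The only steps requiring a bit of care are the reduction $\bX(\bX^\top\bX)^{\dagger}\bx_i = \be_i$ in high dimensions (equivalently $\bX\bX^{\dagger} = \bI_n$ under full row rank) and, in the classical regime, correctly tracking that the additional $\diag(\bP_{\bX}^{\perp})$ term is exactly the contribution of the nonzero in-sample residual $\hbvarepsilon$ rewritten via the LOO shortcut formula — this is precisely why the final statements differ between the two regimes.
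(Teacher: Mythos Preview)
Your proposal is correct and follows essentially the same route as the paper's proof: both invoke Corollary~\ref{cor:loo.beta.shortcut} to express $\hbbeta^{\J}-\hbbeta$ as a $\tvarepsilon_i$-weighted sum of the vectors $(\bX^\top\bX)^{\dagger}\bx_i$ (equivalently, $\hbbeta^{\J}=\hbbeta+\tfrac{n-1}{n}\bX^{\dagger}\tbvarepsilon$), then left-multiply by $\bX$, subtract $\by$, and handle the two regimes by noting $\bP_{\bX}=\bI_n$ and $\hbvarepsilon=\bzero$ under \ref{assump:row_rank}, versus rewriting $\hbvarepsilon=\bP_{\bX}^{\perp}\by=\diag(\bP_{\bX}^{\perp})\,\tbvarepsilon$ via Corollary~\ref{cor:loo.shortcut} under \ref{assump:column_rank}.
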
 
%
%A few remarks are in order. Firstly, 
Corollary \ref{cor:loo.jackknife.1} reveals a close link between the estimation of the leave-one-out residuals and the \jackknife{}. 
In the classical regime, the relationship between the \jackknife{} and the leave-one-out residuals is clear and has been well known, albeit in a slightly different form from \eqref{eq:loo.jack.classical}.
%; see \cite[Eq. (2.4)]{hinkley1977jackknifing} for example.
%
However, we highlight that Corollary \ref{cor:loo.jackknife.1}, in combination with the closed-form expressions for the leave-one-out residuals $\bvarepsilon$, provides a handy computational mean to compute $\hbbeta^{\J}$ without needing to learn $n$ distinct models:
\begin{align*}
    \hbbeta^{\J} 
        &= \bX^{\dagger} \by + \bX^{\dagger} \cdot\left\{ \frac{n-1}{n} \cdot \bP_{\bX} - \diag\left(\bP_{\bX}^\perp \right) \right\} \cdot \tbvarepsilon\\
        &= \hbbeta + \bX^{\dagger} \cdot\left\{ \frac{n-1}{n} \cdot \bP_{\bX} - \diag\left(\bP_{\bX}^\perp \right) \right\} \cdot \big[\diag(\bP_{\bX}^\perp)\big]^{-1} \cdot \bP_{\bX}^\perp  \by.
\end{align*}

Furthermore, \eqref{eq:loo.jack.hd} demonstrates that the OLS interpolator's leave-one-out residuals, which can be computed as per \eqref{eq:loo.shortcut.hd}, can equivalently be calculated by simply rescaling the in-sample residuals based on the \jackknife~estimate. 
Note that $\hbbeta^{\J} \in \rowsp(\bX)$ because $\hbbeta, \hbbeta^{(\sim i)} \in \rowsp(\bX)$ by definition of the minimum $\ell_2$-norm OLS interpolator. 
Therefore, one can similarly express $\hbbeta^{\J}$ in terms of $\tbvarepsilon$ in the high-dimensional regime as
\begin{align*}
    \hbbeta^{\J}
        &= \bX^{\dagger} \by + \frac{n-1}{n} \tbvarepsilon
        = \hbbeta + \frac{n-1}{n} \big[ \diag(\Gram_{\bX}) \big]^{-1} \cdot \Gram_{\bX}  \by.
\end{align*}
% \DS{TODO: address Peng's comment here.}

Although there exists a rescaling factor that left multiples $\tbvarepsilon$ on the left-hand side of \eqref{eq:loo.jack.classical} in the classical regime, this matrix factor reduces the identity matrix when Assumption~\ref{assump:row_rank} holds since $\bP_{\bX} = \bI_n$. 
Thus, in principle, the high-dimensional formula \eqref{eq:loo.jack.hd} can be written in the same formulation as \eqref{eq:loo.jack.classical}. 

% \vspace{8pt}
\medskip
\noindent \textit{II: Variance estimator}. 
The \jackknife~estimation procedure is often used in the classical regime to construct a variance estimator of $\hbbeta$. 
In fact, the HC3 correction of the well-known Eicker-Hubert-White (EHW) variance estimator for $\hbbeta$ was motivated by the \jackknife. 
Amongst the many EHW variants, Long and Ervin recommended the HC3 correction based on extensive simulation studies. 

Formally, the HC3 correction is defined as 
\begin{equation}\label{eqn:jackknife_var}
	    \bhV^{\J} 
            = \frac{1}{(n-1)^2} \sum_{i=1}^n \left(\tbbeta^{(i)} - \hbbeta \right) \left(\tbbeta^{(i)} - \hbbeta \right)^\top. 
\end{equation}
Here, we remark that centering at the unbiased estimator $\hbbeta$ in \eqref{eqn:jackknife_var} simplifies the formula, although the original definition of the Jackknife variance estimator is centered at $\hbbeta^{\J}$. 

% COR 1
\begin{corollary} \label{cor:loo.jackknife.2} 
Let $\bX \in \Rb^{n \times p}$ and $\by \in \Rb^n$. 
\begin{itemize} %[label=(\alph*)]
% classical 
	\item[(a)] \emph{Classical regime} $(n > p)$: If Assumption~\ref{assump:column_rank} holds, then
\begin{align} \label{eq:loo.varjack.classical}
	\bhV^{\text{\normalfont Jack}}
            =  \bX^\dagger \cdot \bOmega  \cdot (\bX^\dagger)^\top,  
\end{align} 
where $\bOmega =  \sum_{i=1}^n \tvarepsilon_i^2 \cdot \be_i \be_i^\top$ with $\tvarepsilon_i$ denoting the $i$-th entry of the leave-one-out residual vector $\tbvarepsilon$ as in \eqref{eq:loo.shortcut.classical}.
 
\item[(b)] \emph{High-dimensional regime} $(n \le p)$: If Assumption~\ref{assump:row_rank} holds, then 
\begin{align} \label{eq:loo.varjack.hd}
    	\bhV^{\text{\normalfont Jack}}
            =  \bX^\dagger \cdot \bOmega \cdot (\bX^\dagger)^\top,  
    \end{align} 
    where $\bOmega = \sum_{i=1}^n \tvarepsilon_i^2 \cdot \be_i \be_i^\top$  with $\tvarepsilon_i$ denoting the $i$-th entry of the leave-one-out residual vector $\tbvarepsilon$ as in \eqref{eq:loo.shortcut.hd}. 
\end{itemize}
\end{corollary}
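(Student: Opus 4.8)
The plan is to reduce the HC3 jackknife variance estimator in \eqref{eqn:jackknife_var} to a weighted sum of rank-one matrices indexed by the LOO residuals, and then recognize that sum as $\bX^\dagger \bOmega (\bX^\dagger)^\top$. First I would rewrite the recentered pseudo-value: since $\tbbeta^{(i)} = n\hbbeta - (n-1)\hbbeta^{(\sim i)}$, we have $\tbbeta^{(i)} - \hbbeta = (n-1)\big(\hbbeta - \hbbeta^{(\sim i)}\big)$, so the prefactor $1/(n-1)^2$ in \eqref{eqn:jackknife_var} exactly cancels the $(n-1)^2$ produced by the outer products, leaving $\bhV^{\J} = \sum_{i=1}^n \big(\hbbeta - \hbbeta^{(\sim i)}\big)\big(\hbbeta - \hbbeta^{(\sim i)}\big)^\top$. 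This single identity covers both regimes at once.

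Next I would substitute the LOO correction from Corollary~\ref{cor:loo.beta.shortcut}, which already unifies the two regimes: $\hbbeta - \hbbeta^{(\sim i)} = \tvarepsilon_i \cdot (\bX^\top \bX)^\dagger \bx_i$, using $(\bX^\top\bX)^{-1} = (\bX^\top\bX)^\dagger$ under Assumption~\ref{assump:column_rank} in the classical case. The $i$-th summand then becomes $\tvarepsilon_i^2 \cdot (\bX^\top\bX)^\dagger \bx_i \bx_i^\top (\bX^\top\bX)^\dagger$. The remaining ingredient is the standard pseudoinverse identity $(\bX^\top\bX)^\dagger \bx_i = (\bX^\top\bX)^\dagger \bX^\top \be_i = \bX^\dagger \be_i$, which follows from $\bx_i = \bX^\top \be_i$ and $\bX^\dagger = (\bX^\top\bX)^\dagger \bX^\top$. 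Pulling $\bX^\dagger$ out on the left and $(\bX^\dagger)^\top$ out on the right of the sum yields $\bhV^{\J} = \bX^\dagger \big(\sum_{i=1}^n \tvarepsilon_i^2\, \be_i\be_i^\top\big)(\bX^\dagger)^\top = \bX^\dagger \bOmega (\bX^\dagger)^\top$, which is the claimed formula for both parts; substituting the shortcut expressions for $\tvarepsilon_i$ from Corollary~\ref{cor:loo.shortcut} then makes everything computable directly from $(\bX,\by)$.

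I do not expect a genuine obstacle: the argument is essentially algebraic bookkeeping layered on top of results already established. The only point that deserves a moment of care is confirming that the classical and high-dimensional cases collapse to the same expression, but this is inherited from Corollary~\ref{cor:loo.beta.shortcut} — once the $(n-1)$ factors cancel, nothing regime-specific remains.
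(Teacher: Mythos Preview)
Your proposal is correct and follows essentially the same route as the paper's proof: both arguments rewrite $\tbbeta^{(i)} - \hbbeta$ via Corollary~\ref{cor:loo.beta.shortcut}, use the identity $(\bX^\top\bX)^\dagger \bx_i = \bX^\dagger \be_i$, and then collapse the sum into $\bX^\dagger \bOmega (\bX^\dagger)^\top$. The only cosmetic difference is that the paper first expresses $\tbbeta^{(i)}$ as $\hbbeta + (n-1)\,\tvarepsilon_i\,\bX^\dagger\be_i$ and then substitutes into \eqref{eqn:jackknife_var}, whereas you first cancel the $(n-1)$ factors and then substitute; the content is identical.
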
 
\begin{proof} 
In both the classical and high-dimensional regimes,
\begin{align}
	\tbbeta^{(i)} 
        &= n \hbbeta - (n-1) \hbbeta^{(\si)}
        \\
        &= n \hbbeta - (n-1) \cdot \left\{ \hbbeta - \tvarepsilon_i (\bX^\top \bX)^\dagger \bx_i \right\}
            &&\because \text{Corollary~\ref{cor:loo.beta.shortcut}}
        \\
        &= \hbbeta + (n-1) \cdot \tvarepsilon_i \cdot \bX^{\dagger} \cdot \be_i,
            &&\because (\bX^\top \bX)^\dagger \bx_i = \bX^{\dagger} \cdot \be_i
            \label{eq:jack.0}
\end{align} 
where the leave-one-out residuals $\tvarepsilon_i$ are appropriately defined for each of the classical and high-dimensional settings. 
We used the fact $(\bX^\top \bX)^{-1} = (\bX^\top \bX)^\dagger$ in the classical regime. 
Thus, the HC3 correction of the EHW variance estimator \eqref{eqn:jackknife_var} admits the expression
\begin{align}
	\bhV^{\text{Jack}} 
	    &= \bX^{\dagger} \cdot \left( \sum_{i=1}^n \tvarepsilon_i^2 \cdot \be_i \be_i^\top \right)  \cdot \big( \bX^\dagger \big)^\top \label{eq:jack.var.1} 
\end{align} 
in both data regimes. 
%Since \eqref{eq:jack.0} holds in both data regimes, \eqref{eq:jack.var.1} holds in both data regimes as well. 
\end{proof}

Comparing the expressions in \eqref{eq:loo.varjack.classical} and \eqref{eq:loo.varjack.hd}, we observe that $\bhV^{\J}$ maintains the same form in both data regimes. 
Since the leave-one-out residual $\tbvarepsilon$ can be computed via ``shortcut'' formulas in \eqref{eq:loo.shortcut.classical} and \eqref{eq:loo.shortcut.hd}, we can also compute $\bhV^{\J}$ using simple matrix multiplications without needing to learn $n$ distinct models. 
However, the actual computation naturally differs due to the distinct calculation of the leave-one-out residuals $\tbvarepsilon$. 

% predictive inference
\medskip \noindent 
\textit{Predictive inference.} 
%
%Next, we discuss how two strategies that anchor on the \jackknife~concept to conduct predictive inference. 
%
Suppose that we have random training data $(\bx_i, y_i) \in \RR^d \times \RR$ for $i \in [n]$ and a new test point $(\bx_{n+1}, y_{n+1})$. 
A reliable prediction interval for a new test point is often desirable. 
That is, for a number $\alpha \in [0, 1]$, we want to construct a prediction interval $\PI_{\alpha}$ with target coverage level $1-\alpha$ as a function of the $n$ training data points. 
Formally, given $\left\{ (\bx_i, y_i): i \in [n] \right\}$ and $\alpha$, we want the map $\PI_{\alpha}: \bx_{n+1} \mapsto \PI_{\alpha} ( \bx_{n+1} ) \subseteq \RR$ to satisfy
\[
    \mathbb{P}\left\{ y_{n+1} \in \PI_{\alpha}(\bx_{n+1}) \right\} \geq 1 - \alpha,
\]
where the probability is taken with respect to the randomness in both the training data and the new test point. 

A straightforward approach is to use the in-sample residuals to estimate the prediction error at a new test point $\bx_{n+1}$, for example, by considering
\begin{align*}
    \PI_{\alpha}(\bx_{n+1}) = \left[ \hy_{n+1} - r_{\alpha}, \hy_{n+1} + r_{\alpha} \right],
    % \qquad\text{where}\qquad&
    %     \hy_{n+1} = \bx_{n+1}^{\top} \hbbeta,\\
    %     \text{and}\qquad&
    % r_{\alpha} \coloneqq \text{the }(1-\alpha)\text{ quantile of }\left\{ | y_i- \bx_i^{\top} \hbbeta|: i \in [n] \right\}.
\end{align*}
where $\hy_{n+1} = \bx_{n+1}^{\top} \hbbeta$ and $r_{\alpha} \coloneqq \text{the }(1-\alpha)\text{ quantile of }\{ | y_i- \bx_i^{\top} \hbbeta|: i \in [n] \}$. 
%
% \begin{align*}
%     \PI_{\alpha}(\bx_{n+1}) = \left[ \hy_{n+1} - r_{\alpha}, \hy_{n+1} + r_{\alpha} \right],
%     \qquad\text{where}\qquad&
%         \hy_{n+1} = \bx_{n+1}^{\top} \hbbeta,\\
%         \text{and}\qquad&
%     r_{\alpha} \coloneqq \text{the }(1-\alpha)\text{ quantile of }\left\{ | y_i- \bx_i^{\top} \hbbeta|: i \in [n] \right\}.
% \end{align*}
However, this approach can lead to undercoverage, meaning that $\mathbb{P}\{ y_{n+1} \in \PI_{\alpha}(\bx_{n+1}) \}$ tends to be lower than the target level $1-\alpha$.
%due to overfitting. 

\medskip
\noindent \textit{I: Standard \jackknife}. 
To address this problem, the standard \jackknife~prediction method uses the leave-one-out residuals instead of the in-sample residuals to construct a prediction interval.
% that estimates the typical prediction error on the new test point. 

To be precise, for any $\alpha \in [0,1]$, the \jackknife~prediction interval is defined as follows:
\begin{equation}
    \begin{aligned}
        \PI^{\J}_{\alpha}(\bx_{n+1}) = \left[ \hy_{n+1} - r^{\J}_{\alpha}, \hy_{n+1} + r^{\J}_{\alpha} \right],
        % \qquad\text{where}\qquad&
        %     \hy_{n+1} = \bx_{n+1}^{\top} \hbbeta,\\
        %     \text{and}\qquad&
        % r^{\J}_{\alpha} \coloneqq \text{the }(1-\alpha)\text{ quantile of }\left\{ | \tvarepsilon_i| : i \in [n] \right\}. 
        \label{eq:pred.interval.jack} 
    \end{aligned}
\end{equation}
where $r^{\J}_{\alpha} \coloneqq \text{the }(1-\alpha)\text{ quantile of }\left\{ | \tvarepsilon_i| : i \in [n] \right\}$.
Recall that $\tvarepsilon_i\coloneqq y_i - \bx_i^{\top} \hbbeta^{(\sim i)}$ denotes the leave-$i$-out prediction residual, and these leave-one-out residuals can be efficiently computed via the computational shortcut formulas in \eqref{eq:loo.shortcut.classical} (classical regime) or \eqref{eq:loo.shortcut.hd} (high-dimensional regime) of Corollary~\ref{cor:loo.shortcut}. 
Intuitively, the \jackknife~approach should mitigate the overfitting problem by using leave-one-out residuals and achieve the desired coverage on average.

% JACKKNIFE + 
\medskip
\noindent \textit{II: The \jackknife+}. 
Despite its widespread use, the \jackknife~approach is criticized for two drawbacks: (1) the lack of universal theoretical guarantees, and (2) the tendency of undercoverage in the case where the regression algorithm is unstable \citep{jackknife+}. 
As a remedy, \cite{jackknife+} introduced the \jackknifep. % a novel and theoretically sound method for constructing predictive intervals. 
While both the \jackknife~and \jackknifep~use the leave-one-out residuals, the \jackknifep~also uses the leave-one-out predictions for the test point. 

Formally, for $\alpha \in [0,1]$ and a finite set $\cR$, we define the quantile function as 
\[
    \hq_{\alpha}(\cR) \coloneqq 
        \begin{cases}
            \text{the }\lceil (1-\alpha)(|\cR|+1) \rceil\text{-th smallest value in } \cR &
                \text{if }\alpha \geq \frac{1}{|\cR|+1},\\
            \infty  & \text{if }\alpha < \frac{1}{|\cR|+1},
        \end{cases}
\]
where $|\cR|$ is the cardinality of the set $\cR$. 
For any $\alpha \in [0,1]$, the \jackknifep~prediction interval is defined as
\begin{equation}
    \begin{aligned}
        \PI^{\J+}_{\alpha}(\bx_{n+1}) = \left[ - \hq_{\alpha} \Big\{ -\bx_{n+1}^{\top} \hbbeta^{(\sim i)} + |\tvarepsilon_i|: i \in [n] \Big\},~~ \hq_{\alpha} \Big\{ \bx_{n+1}^{\top} \hbbeta^{(\sim i)} + |\tvarepsilon_i|: i \in [n] \Big\} \right].
        \label{eq:pred.interval.jack+} 
    \end{aligned}
\end{equation}
As discussed in \cite{jackknife+}, the \jackknifep~prediction interval can be interpreted as an interval around the \textit{median} prediction of the leave-one-out predictions,
\[
    \textup{Median} \left( \bx_{n+1}^{\top} \hbbeta^{(\sim 1)}, \dots, \bx_{n+1}^{\top} \hbbeta^{(\sim n)} \right),
\]
which is guaranteed to be in $\PI^{\J+}_{\alpha}(\bx_{n+1})$ for all $\alpha \leq 1/2$; although, this interval is generally not symmetric around this median.

Given the efficacy of the \jackknifep~method, we provide a shortcut to compute the leave-one-out predictions for the OLS minimum $\ell_2$-norm estimator. 
To state our result, recall $\hy_{n+1} = \bx_{n+1}^\top \hbbeta$ denotes the predicted value at $\bx_{n+1}$ based on the full-sample OLS.

% corollary 
\begin{corollary} \label{cor:loo.pred}
Let $\bX \in \Rb^{n \times p}$, $\by \in \Rb^n$, and $\bx_{n+1} \in \Rb^p$. 
\begin{itemize} %[label=(\alph*)]
		
	\item[(a)] \emph{Classical regime} $(n > p)$: If Assumption~\ref{assump:column_rank} holds, then 
\begin{align}\label{eqn:jackknifep.classical}
    % \begin{bmatrix}
    %     \hy_{n+1}^{(\sim 1)} & \cdots & \hy_{n+1}^{(\sim n)}
    % \end{bmatrix}^{\top}
    % = \hy_{n+1} \cdot \bone_{n} - \diag(\tbvarepsilon) \cdot \bX^{\dagger, \top} \bx_{n+1},
    \bx_{n+1}^{\top} \begin{bmatrix} \hbbeta^{(\sim 1)} & \cdots & \hbbeta^{(\sim n)}\end{bmatrix}
        = \hy_{n+1} \cdot \bone_{n}^{\top} - \bx_{n+1}^{\top} \cdot\bX^{\dagger} \cdot \diag(\tbvarepsilon), 
\end{align} 
where the leave-one-out residual vector $\tbvarepsilon = \big[\diag(\bP_{\bX}^\perp)\big]^{-1} \cdot \bP_{\bX}^\perp  \by$, as defined in \eqref{eq:loo.shortcut.classical}. 

	\item[(b)] \emph{High-dimensional regime} $(n \le p)$: If Assumption~\ref{assump:row_rank} holds, then 
\begin{align}\label{eqn:jackknifep.hd}
    % \begin{bmatrix}
    %     \hy_{n+1}^{(\sim 1)} & \cdots & \hy_{n+1}^{(\sim n)}
    % \end{bmatrix}^{\top}
    %     = \hy_{n+1} \cdot \bone_{n} - \diag(\tbvarepsilon) \cdot \bX^{\dagger, \top} \bx_{n+1}, 
    \bx_{n+1}^{\top} \begin{bmatrix} \hbbeta^{(\sim 1)} & \cdots & \hbbeta^{(\sim n)}\end{bmatrix}
        = \hy_{n+1} \cdot \bone_{n}^{\top} - \bx_{n+1}^{\top} \cdot\bX^{\dagger} \cdot \diag(\tbvarepsilon),
\end{align} 
where the leave-one-out residual vector $\tbvarepsilon = \big[ \diag(\Gram_{\bX}) \big]^{-1} \cdot \Gram_{\bX}  \by$, as defined in \eqref{eq:loo.shortcut.hd}. 
\end{itemize}
\end{corollary}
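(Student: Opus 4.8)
The plan is to obtain the claimed row-vector identity as a direct bookkeeping consequence of Corollary~\ref{cor:loo.beta.shortcut}, which already expresses the leave-$i$-out gap in closed form. In the classical regime \eqref{eq:loo.redundant.classical} gives $\hbbeta^{(\sim i)} - \hbbeta = -\tvarepsilon_i (\bX^\top \bX)^{-1} \bx_i$, and in the high-dimensional regime \eqref{eq:loo.redundant.hd} gives $\hbbeta^{(\sim i)} - \hbbeta = -\tvarepsilon_i (\bX^\top \bX)^{\dagger} \bx_i$; since $(\bX^\top \bX)^{-1} = (\bX^\top \bX)^{\dagger}$ under Assumption~\ref{assump:column_rank}, both cases are summarized by one formula, with $\tbvarepsilon$ defined via \eqref{eq:loo.shortcut.classical} or \eqref{eq:loo.shortcut.hd} as appropriate. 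So it suffices to argue once and instantiate $\tbvarepsilon$ at the end.

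First I would record the pseudoinverse identity $(\bX^\top \bX)^{\dagger} \bx_i = \bX^{\dagger} \be_i$, which follows from $\bX^{\dagger} = (\bX^\top \bX)^{\dagger} \bX^\top$ together with $\bX^\top \be_i = \bx_i$ (the $i$-th row of $\bX$, viewed as a column vector); this is exactly the substitution already used in the proof of Corollary~\ref{cor:loo.beta.shortcut}. Hence $\hbbeta^{(\sim i)} = \hbbeta - \tvarepsilon_i \bX^{\dagger} \be_i$ for every $i \in [n]$. Left-multiplying by $\bx_{n+1}^\top$ and using $\hy_{n+1} = \bx_{n+1}^\top \hbbeta$ yields the scalar identity $\bx_{n+1}^\top \hbbeta^{(\sim i)} = \hy_{n+1} - \tvarepsilon_i \big( \bx_{n+1}^\top \bX^{\dagger} \be_i \big)$.

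Finally I would assemble these $n$ scalar identities into a single row vector indexed by $i \in [n]$: the constant term stacks into $\hy_{n+1} \bone_n^\top$, and the $i$-th entry of the remaining term is $\big( \bx_{n+1}^\top \bX^{\dagger} \big)_i \, \tvarepsilon_i$, which is precisely the $i$-th entry of $\bx_{n+1}^\top \bX^{\dagger} \diag(\tbvarepsilon)$. This gives both \eqref{eqn:jackknifep.classical} and \eqref{eqn:jackknifep.hd}, the only difference between parts (a) and (b) being which closed form is substituted for $\tbvarepsilon$.

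There is no substantive obstacle here; the content is entirely carried by Corollary~\ref{cor:loo.beta.shortcut}. The one point to state carefully is the identification $\big( \bv^\top \diag(\bw) \big)_i = v_i w_i$, so that collecting the per-$i$ formulas produces the compact expression $\bx_{n+1}^\top \bX^{\dagger} \diag(\tbvarepsilon)$ rather than something with a stray transpose or a full (non-diagonal) matrix in place of $\diag(\tbvarepsilon)$.
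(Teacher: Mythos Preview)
Your proposal is correct and follows essentially the same approach as the paper: both invoke Corollary~\ref{cor:loo.beta.shortcut} to write $\hbbeta^{(\sim i)} = \hbbeta - \tvarepsilon_i (\bX^\top \bX)^{\dagger}\bx_i$, use the identity $(\bX^\top \bX)^{\dagger}\bx_i = \bX^{\dagger}\be_i$, left-multiply by $\bx_{n+1}^\top$, and then collect the $n$ scalar identities into a single row vector via $\diag(\tbvarepsilon)$. The paper's proof is slightly more terse but otherwise identical in structure and content.
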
 
\begin{proof} %[Proof of Corollary \ref{cor:loo.pred}]
For each $i \in [n]$, we have 
\begin{align}
	% \hy_{n+1}^{(\sim 1)}
    \bx_{n+1}^\top \hbbeta^{(\si)}
    	&= \bx_{n+1}^\top \cdot \left\{ \hbbeta - (\bX^\top \bX)^\dagger \bx_i  \tvarepsilon_i \right\}
            &&\because \text{Corollary~\ref{cor:loo.beta.shortcut}}\\
    	&= \hy_{n+1} - \bx_{n+1}^\top (\bX^\top \bX)^\dagger \bx_i \tvarepsilon_i
            &&\because \hy_{n+1} = \bx_{n+1}^{\top} \hbbeta\\
        &= \hy_{n+1} - \bx_{n+1}^\top \bX^{\dagger} \be_i  \be_i^\top \tbvarepsilon
            &&\because \bx_i = \bX^\top \be_i \text{ and }\tvarepsilon_i = \be_i^\top \tbvarepsilon
\end{align}
in both data regimes. 
Collecting the expressions $\bx_{n+1}^\top \hbbeta^{(\si)}$ in a single matrix-vector equation, we obtain
\begin{equation}
    \bx_{n+1}^{\top} \begin{bmatrix} \hbbeta^{(\sim 1)} & \cdots & \hbbeta^{(\sim n)}\end{bmatrix}
        = \hy_{n+1} \cdot \bone_{1, n} - \bx_{n+1}^{\top} \bX^{\dagger} \cdot  \diag(\tbvarepsilon), 
\end{equation}
where we recall $\diag(\tbvarepsilon) \coloneqq \sum_{i=1}^n \tvarepsilon_i \be_i \be_i^{\top}$.
\end{proof} 
It is evident from \eqref{eqn:jackknifep.classical} and \eqref{eqn:jackknifep.hd} that the predicted value based on the leave-$i$-out samples can be directly computed from $(\bX, \by)$ in a single operation for all $i \in [n]$ without having to construct and evaluate the performance of $n$ individual models.

% Proof of COROLLARY 9
%subsub
\subsection{Proof of Corollary~\ref{cor:loo.jackknife.1}} \label{sec:proof_jackknife.1}

\begin{proof} %[Proof of Corollary \ref{cor:loo.jackknife.1}]
We present the proofs for both the classical and high-dimensional regimes. 
In this proof, we first prove the equations for the leave-one-out residuals and then the formulas for the variance estimators.

Observe that for both classical and high-dimensional regimes, the $i$-th leave-one-out pseudo-value is written as
\begin{align}
	\tbbeta^{(i)} 
        &= n \hbbeta - (n-1) \hbbeta^{(\si)}
        \\
        &= n \hbbeta - (n-1) \cdot \left\{ \hbbeta - \tvarepsilon_i (\bX^\top \bX)^\dagger \bx_i \right\}&&\because \text{Corollary~\ref{cor:loo.beta.shortcut}}
        \\
        &= \hbbeta + (n-1) \cdot \tvarepsilon_i (\bX^\top \bX)^\dagger \bx_i. \label{eq:jack.1} 
\end{align} 
Note that we used the fact $(\bX^\top \bX)^{-1} = (\bX^\top \bX)^\dagger$ when applying Corollary~\ref{cor:loo.beta.shortcut} for the classical regime. 
In turn, \eqref{eq:jack.1} implies that the jackknife point estimator, as expressed in \eqref{eqn:jackknife.pseudo}, can be written as 
\begin{align}
	\hbbeta^{\text{Jack}} 
	    &= \frac{1}{n} \sum_{i=1}^n \tbbeta^{(i)}
	    = \hbbeta + \left(\frac{n-1}{n} \right) \bX^\dagger \tbvarepsilon. \label{eq:jack.2} 
\end{align} 
Multiplying $\bX$ from left to both sides of \eqref{eq:jack.2} and subtracting $\by$, we obtain
\begin{align}
	\bX \hbbeta^{\text{Jack}} - \by
        &= \bX \left\{ \hbbeta + \left(\frac{n-1}{n} \right) \bX^{\dagger} \tbvarepsilon \right\} - \by\\
	    &= \left( \bX \hbbeta - \by \right) + \left(\frac{n-1}{n} \right) \bP_{\bX} \tbvarepsilon, \label{eq:jack.3} 
\end{align} 
where we recall $\bP_{\bX} = \bX \bX^\dagger$. 

\begin{itemize} %[label=(\alph*)]
	\item[(i)] \emph{Proof of \eqref{eq:loo.jack.classical} (classical regime)}: 
	We first note that $\by - \bX \hbbeta = \by - \bX\bX^{\dagger} \by = \bP_{\bX}^\perp \by$ precisely corresponds to the in-sample residuals. 
	Thus, \eqref{eq:jack.3} yields    
	\begin{align}
		\bX \hbbeta^{\text{Jack}} - \by
	    	&= - \bP_{\bX}^\perp \by + \left(\frac{n-1}{n} \right) \bP_{\bX} \tbvarepsilon
    		\\
    		&= -\diag\left( \bP_{\bX}^\perp \right) \cdot \tbvarepsilon + \left(\frac{n-1}{n} \right) \bP_{\bX} \tbvarepsilon
                &&\because \text{Corollary~\ref{cor:loo.shortcut}},~\eqref{eq:loo.shortcut.classical}
    		\\
    		&= \left\{  \left( \frac{n-1}{n} \right) \bP_{\bX} - \diag\left(\bP_{\bX}^\perp \right) \right\} \cdot \tbvarepsilon,
	\end{align} 
	where the leave-one-out residual vector $\tbvarepsilon$ is as defined as in \eqref{eq:loo.shortcut.classical}. 
	
	\item[(ii)] \emph{Proof of \eqref{eq:loo.jack.hd} (high-dimensional regime)}: 
    Contextualizing \eqref{eq:jack.3} in the high-dimensional regime, we observe that under Assumption~\ref{assump:row_rank}, (i) $\bX\hbbeta - \by = \bzero$ and (ii) $\bP_{\bX} = \bI_n$.
	Hence,
	\begin{align}
		\bX \hbbeta^{\text{Jack}} - \by
	    	&= \left(\frac{n-1}{n} \right) \bP_{\bX} \tbvarepsilon
		= \left( \frac{n-1}{n} \right) \tbvarepsilon,
	\end{align} 
	where the leave-one-out residual vector $\tbvarepsilon$ is as defined as in \eqref{eq:loo.shortcut.hd}. 
\end{itemize} 
\end{proof}

% Expansion on the Gauss--Markov theorem

\section{Expansion on remark~\ref{remark:gauss.marko.homo}} \label{sec:gauss.markov.exp}
\subsection{Restatement of Theorem~\ref{thm:Gauss_Markov}}
We begin with a more general restatement of the high-dimensional result in Theorem~\ref{thm:Gauss_Markov}(b). 
\begin{theorem}[Gauss-Markov theorem in the high-dimensional regime $(n \le p)$]\label{thm:Gauss_Markov.general}
Suppose Assumption~\ref{assump:row_rank} and Assumption~\ref{assump:lm} hold with an \underline{arbitrary} $\bSigma \succeq \bzero$. 
        If $\hbbeta = \olsmin(\bX, \by)$ and $\tbbeta = \bL \by$ is any estimator of $\bbeta$ that is linear in $\by$ such that $\bbE[\bX \tbbeta] = \bX \bbeta$, then: 
        \begin{itemize}
            \item[(i)] 
            For every $\bv = \bX^{\top} \bc \in \rowsp(\bX)$, 
            \[
                \bc^{\top} \bSigma \bc = \bv^{\top} \Cov( \hbbeta ) \bv = \bv^{\top} \Cov( \tbbeta ) \bv. 
            \]
            
            \item[(ii)] 
            For every $\bw \in \rowsp(\bX)^{\perp}$, 
            \[
                0 = \bw^{\top} \Cov( \hbbeta ) \bw \leq \bw^{\top} \Cov( \tbbeta ) \bw,
            \]
            with equality for all such $\bw$ if and only if $\rowsp(\bL-\bX^\dagger)\subseteq \ker(\Sigma)$.
        \end{itemize}
\end{theorem}

Theorem~\ref{thm:Gauss_Markov.general} makes clear that the high-dimensional result does not require homoskedasticity. 
The key distinction is that Theorem~\ref{thm:Gauss_Markov.general} only compares variances along $\rowsp(\bX)$ and its orthogonal complement, rather than enforcing a global positive semidefinite ordering as in the classical Gauss-Markov relationship stated in Theorem~\ref{thm:Gauss_Markov}(a).  
For $\bv=\bX^\top\bc$, the identity 
\[
        \bv^\top\Cov(\bL\by)\,\bv=\bc^\top\bSigma\,\bc
\]
holds for every right inverse $\bL$ and every $\bSigma\succeq0$. 
Thus, Theorem~\ref{thm:Gauss_Markov.general}(i) does not rely on homoskedasticity. 
For $\bw\in\rowsp(\bX)^\perp$, writing $\bL=\bX^\dagger+\bN$ with $\bX\bN=0$ gives
\[
        \bw^\top \Bigl\{ \Cov(\bL\by)-\Cov(\bX^\dagger\by) \Big\} \bw
            = \| \bSigma^{1/2}\bN^\top\bw \|_2^2
            \geq 0. 
\]
Hence, Theorem~\ref{thm:Gauss_Markov.general}(ii) also requires only $\bSigma\succeq \bzero$; equality for all such $\bw$ occurs if and only if $\rowsp(\bL-\bX^\dagger)\subseteq\ker(\bSigma)$. 
By contrast, Theorem~\ref{thm:Gauss_Markov}(a) seeks a \emph{global} minimization in the positive semidefinite ordering across \emph{all} directions in $\RR^p$. 
Without homoskedasticity, OLS need not be the best linear unbiased estimator. 
For this reason, the classical result of Theorem~\ref{thm:Gauss_Markov}(a) assumes $\bSigma=\sigma^2\bI_n$, while Theorem~\ref{thm:Gauss_Markov.general} does not.  

To extend the analysis in the classical regime to the heteroskedastic setting, consider a general covariance matrix $\bSigma \succ \bzero$. 
Let $\bSigma^{1/2} = \bU \bS^{1/2} \bU^{\top}$ be its eigendecomposition, with $\bSigma^{-1/2} = \big( \bSigma^{1/2} \big)^{-1}$. 
Transform the data and rewrite the linear model in Assumption~\ref{assump:lm} as $\bSigma^{-1/2} \by = \bSigma^{-1/2} \bX \bbeta + \bSigma^{-1/2} \bvarepsilon$. 
Observe that the transformed error $\bvarepsilon' \coloneqq \bSigma^{-1/2} \bvarepsilon$ has mean $\bzero$ and covariance $\bI_n$. 
Therefore, 
\begin{align}
	\hbbeta^{\bSigma} \coloneqq \olsmin\big( \bSigma^{-1/2} \bX, \bSigma^{-1/2} \by \big)
             = \bigl( \bSigma^{-1/2} \bX \bigr)^{\dagger} \bSigma^{-1/2} \by\
\end{align} 
is the best linear unbiased estimator in the sense of Theorem \ref{thm:Gauss_Markov.general}.
In the classical regime, this estimator reduces to the well-known generalized least squares estimator \citep{aitken1936iv}:
     \begin{align*}
         \hbbeta^{\bSigma}
             %&= \left( \bSigma^{-1/2} \bX \right)^{\dagger} \bSigma^{-1/2} \by\\
             &= \Bigl\{ \bigl( \bSigma^{-1/2} \bX \bigr)^{\top} \bigl( \bSigma^{-1/2} \bX \bigr) \Bigr\}^{-1} \cdot\bigl( \bSigma^{-1/2} \bX \bigr)^{\top} \bSigma^{-1/2} \by
             \\
             &= \bigl\{ \bX^{\top} \bSigma^{-1} \bX \bigr\}^{-1} \cdot \bX^{\top} \bSigma^{-1} \by.
     \end{align*}

% Simulations
% simulation studies 
\section{Simulation Study Details from Section~\ref{sec:sim.variance}} \label{app:sims}
This section details the simulation setup from Section~\ref{sec:sim.variance}, and offers several complementary results on the estimation bias. 
%This section conducts four simulations to examine the finite-sample bias of the homoskedastic variance estimator of \eqref{eq:var.estimator.hd} from various perspectives and under various generative models. 
%
Notably, while Theorem~\ref{thm:var.estimator} holds under Assumption~\ref{assump:lm} with $\bX$ fixed, our simulations will introduce randomness in $\bX$ to assess the estimator's average performance. 
%
%% Covariate models
%\subsection{Covariate models} \label{sec:sim.covariates} 
%%
%Consider three different generating processes for the covariate matrix $\bX \in \Rb^{n \times p}$. 
%%
%\begin{enumerate} %[label=(\alph*)]
%
%% standard normal
%\item \emph{Standard normal model.} 
%%
%Let $\bX$ be a random matrix whose entries are i.i.d. draws from a standard normal. 
%% Let $\bX$ be a random matrix with i.i.d. standard normal entries.
%
%% spiked model
%\item \emph{Spiked model} \citep{spiked_model}. 
%%
%Let $\bX = \bU \bSigma^{1/2}$, where $\bU \in \Rb^{n \times p}$ is a random matrix with orthonormal rows and $\bSigma = \sigma_x^2 \cdot \left( \bI_p + \sum_{\ell=1}^k \lambda_\ell \bv_\ell \bv_\ell^\top \right) \in \Rb^{p \times p}$ with $\sigma_x \in \Rb$, $\lambda_\ell \gg 1$, and $v_\ell$ sampled uniformly at random from the unit sphere of dimension $p-1$. 
%
%% geometrically decaying spectrum
%\item \emph{Geometric model.} 
%%
%Let $\bX = \bU \bSigma^{1/2} \bV^\top$, where $\bU \in \Rb^{n \times p}$, $\bV \in \Rb^{p \times p}$ are random matrices with orthonormal rows and $\bSigma = \lambda^2 \diag\left( \rho^\ell \right) \in \Rb^{p \times p}$ with $\lambda \in \Rb$, $\rho \in (0,1)$. 
%
%\end{enumerate} 

% SIMULATION 1
\subsection{Simulation I: fixed dimension and varying sample size} \label{sec:sim.bias.1} 
%
%We first study the bias with fixed $p$ and varying $n$. 

% DGP 
\textit{Data generating process.} \label{sec.sim.1.dgp} 
We fix $p = 200$ and set $\bbeta = p^{-1/2} \cdot \bone_p$. 
We consider $n \in \{25, 50, 75, \dots, 175 \}$. 
For each $n$, we generate two covariatates $\bX$ as per Section~\ref{sec:sim.covariates}. 
For the spiked model, we choose $\sigma^2_x = 1$ and sample $\lambda_\ell$ independently from a uniform distribution over $[10, 20]$. %; for the geometric model, we choose $\lambda=1$ and $\rho = 0.95$. 
We construct $\by = \bX \bbeta + \bvarepsilon$ by sampling the entries of $\bvarepsilon$ from a standard normal. 

% results 
\textit{Simulation results.} \label{sec:sim.1.results} 
For each $(\bX, \by)$, we estimate $\sigma^2$ via the LOO residual-based homoskedastic variance estimator $\hsigma^2$, as presented in \eqref{eq:var.estimator.hd}. 
Figure~\ref{fig:bias} displays the biases $\hsigma^2 - \sigma^2$ for each covariate generative model across sample sizes and averaged over $100$ trials, where each trial consists of an independent draw of $\bX$ and observations of size $100$; this amounts to $10000$ total simulation repeats per sample size. 
%
%For the spiked and geometric models, our results show that the average bias is nearly zero across all sample sizes $n$. 
%%
%Additionally, the variances of both models decrease as $n$ increases. 
%%
%For the standard normal model, as $n$ increases, the bias decreases and the variance remains roughly constant before increasing at $n \ge 175$. 

% SIMULATION 2
\subsection{Simulation II: varying dimension with fixed sample-to-dimension ratio} \label{sec:sim.bias.2} 
%
%Next, we analyze the bias with a fixed ratio of $n/p$ and increasing dimension $p$. 

% DGP 
\textit{Data generating process.} \label{sec:sim.2.dgp}
We fix the aspect ratio as $n/p = 0.8$. 
For each $p \in \{200, 400, 600, 800, 1000\}$, we generate the true model $\bbeta = p^{-1/2} \cdot \bone_p$. 
Then, we generate two covariates $\bX$ as per Section~\ref{sec:sim.covariates} with the same parameters as selected in Section~\ref{sec.sim.1.dgp}. 
We generate $\by = \bX \bbeta + \bvarepsilon$ by sampling the entries of $\bvarepsilon$ from a standard normal.
% \sim \Normal(\bzero, \bI_n)$.

% results
\medskip
\textit{Simulation results.} \label{sec:sim.2.results} 
For each $(\bX, \by)$, we compute $\hsigma^2$ in \eqref{eq:var.estimator.hd}. 
Figure~\ref{fig:bias.2} visualizes the biases $\hsigma^2 - \sigma^2$ for each covariate generative model across covariate sizes and averaged over $100$ trials, where each trial is defined as in Section~\ref{sec:sim.1.results}. 
For the spiked model, the average bias is nearly zero across covariate sizes. 
For the standard normal model, the average bias is positive, indicating conservatism; this is consistent with Theorem~\ref{thm:var.estimator}. 
However, the variances under both models decrease as $p$ increases. 

%\begin{figure*}
%	\centering  
%        \subfloat[Standard normal model.]{\includegraphics[width=0.32\textwidth]{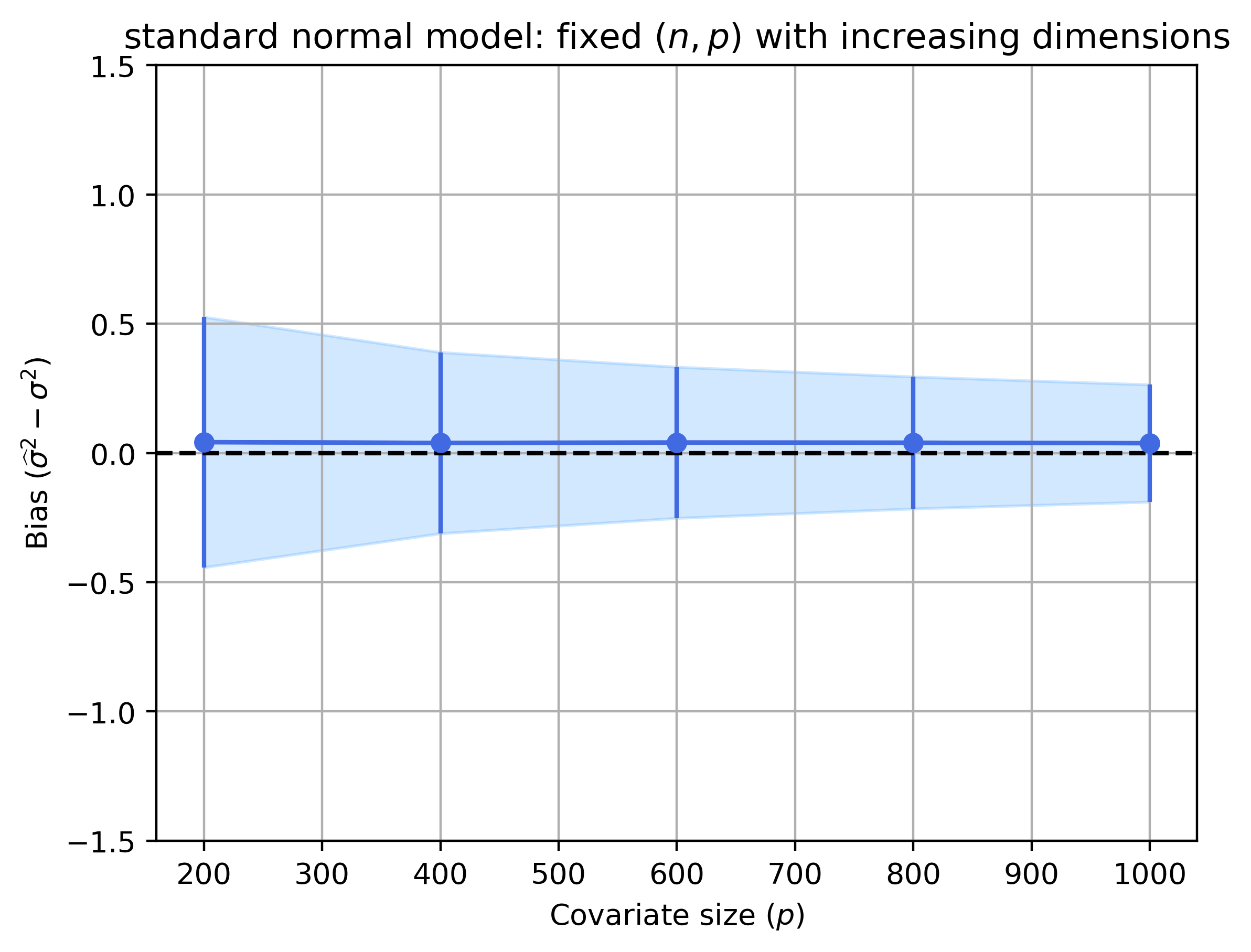}\label{fig:standard_normal.2}}
%        \subfloat[Spiked model.]{\includegraphics[width=0.32\textwidth]{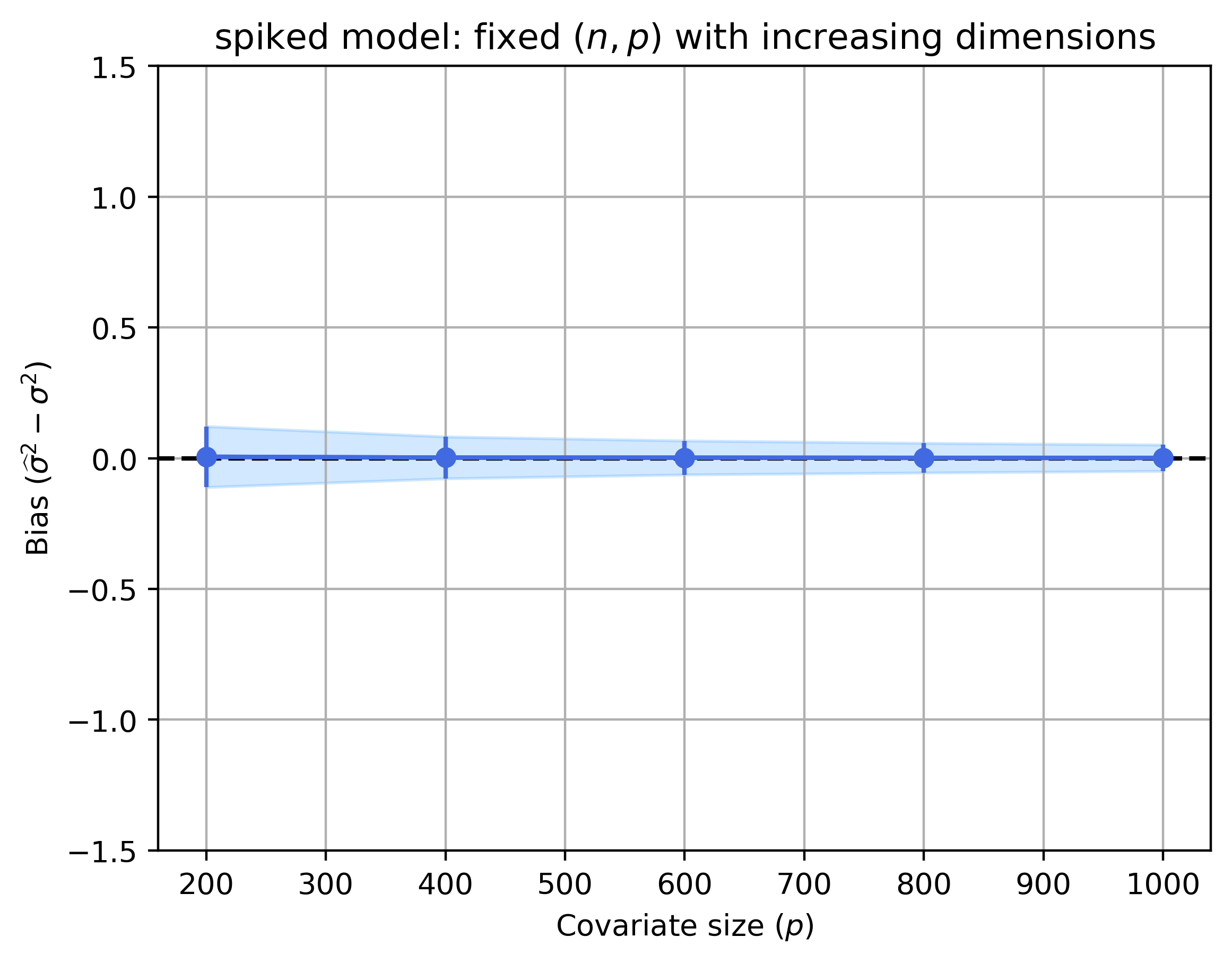}\label{fig:spiked.2}} 
%        \subfloat[Geometric model.]{\includegraphics[width=0.32\textwidth]{images/sim2/geometric.png}\label{fig:geometric.2}} 
%	\caption{Simulation results of the biases of our variance estimator in \eqref{eq:var.estimator.hd} with fixed  $n/p = 0.8$ and varying $n \in \{200, 400, \dots, 1000 \}$. The horizontal solid lines show the mean over $100$ trials, with shading and vertical bars to show $\pm$ one standard error.}
%	\label{fig:bias.2} 
%\end{figure*}

% SIMULATION 3
\subsection{Simulation III: varying noise variance}  \label{sec:sim.bias.3} 
%
%Finally, we investigate the bias for a fixed $(n, p)$ pair with increasing noise variance $\sigma^2$. 

% DGP 
\textit{Data generating process.} 
We maintain a fixed aspect ratio of $n/p = 0.8$ with $p = 200$ and $n=160$. 
We set $\bbeta = p^{-1/2} \cdot \bone_p$ and generate tw covariates $\bX$ as per Section~\ref{sec:sim.covariates} with the same parameters as selected in Section~\ref{sec.sim.1.dgp}.  
We consider increasing noise $\sigma \in \{1, 2, 3, \dots, 10\}$. 
For each $\sigma$, we generate $\by = \bX \bbeta + \bvarepsilon$ by sampling $\bvarepsilon \sim \Normal(\bzero, \sigma^2 \cdot \bI_n)$.

% results
\medskip
\textit{Simulation results.} 
For each $(\bX, \by)$, we compute the variance estimator $\hsigma^2$ in \eqref{eq:var.estimator.hd}. 
Figure~\ref{fig:bias.3} visualizes the biases for each covariate model across increasing levels of noise and averaged over $100$ trials, where each trial is defined as in Section~\ref{sec:sim.1.results}. 
Across covariate models, our findings indicate that the average bias is nearly zero but the variance increases as the noise increases. 
Specifically, our observations suggest that the bias primarily depends on the design matrix $\bX$ and parameter $\bbeta$, as implied by \eqref{eq:bias.hd} and \eqref{eq:homo.var.bias}.

%\begin{figure*}%[t!]
%	\centering  
%        \subfloat[Standard normal model.]{\includegraphics[width=0.32\textwidth]{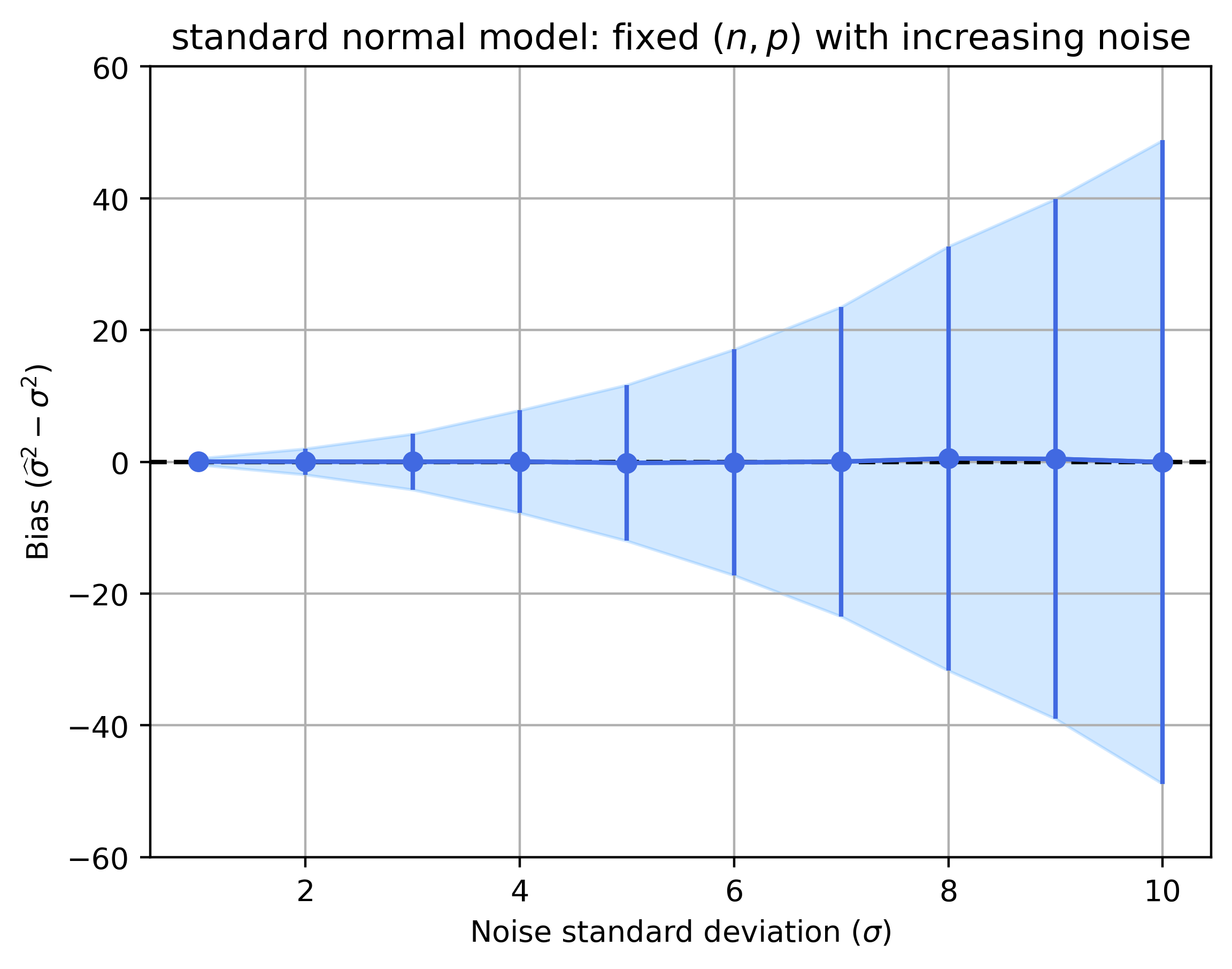}\label{fig:standard_normal.3}}
%        \subfloat[Spiked model.]{\includegraphics[width=0.32\textwidth]{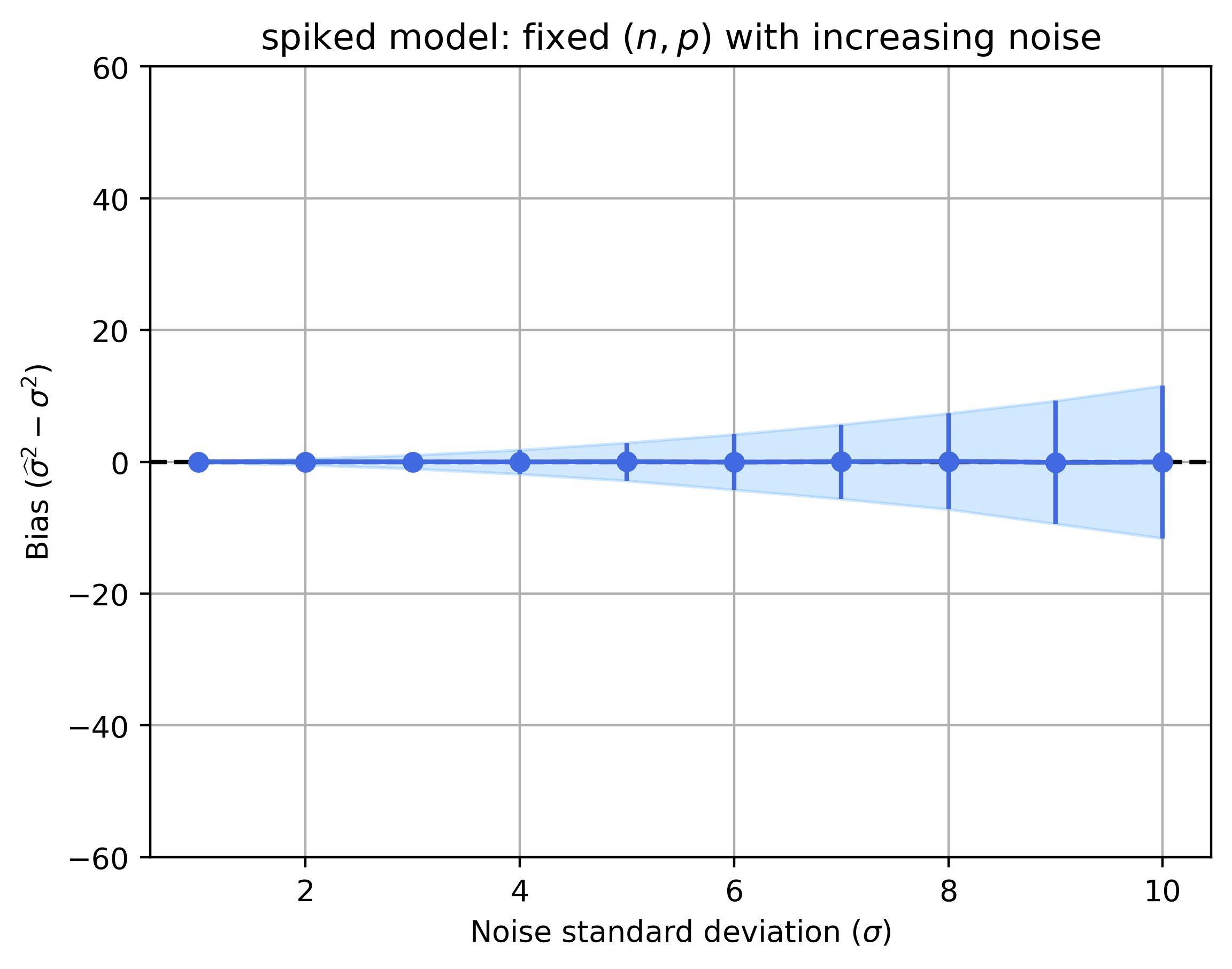}\label{fig:spiked.3}} 
%        \subfloat[Geometric model.]{\includegraphics[width=0.32\textwidth]{images/sim3/geometric.png}\label{fig:geometric.3}} 
%	\caption{\small Simulation results of the biases of our variance estimator in \eqref{eq:var.estimator.hd} with fixed $n/p=160/200$ and varying $\sigma \in \{1, 2, 3, \dots, 10 \}$. The horizontal solid lines show the mean over $100$ trials, with shading and vertical bars to show $\pm$ one standard error.}
%	\label{fig:bias.3} 
%\end{figure*}

% ALL FIGURES!!!

%\begin{figure*}[t!]
%	\centering  
%        \subfloat[Standard normal model.]{\includegraphics[width=0.4\textwidth]{images/sim1/standard_normal.png}\label{fig:standard_normal}}
%        \qquad \quad
%        \subfloat[Spiked model.]{\includegraphics[width=0.4\textwidth]{images/sim1/spiked.png}\label{fig:spiked}} 
%        %\subfloat[Geometric model.]{\includegraphics[width=0.32\textwidth]{images/sim1/geometric.png}\label{fig:geometric}} 
%	\caption{Simulation results of the biases of our variance estimator in \eqref{eq:var.estimator.hd} with fixed $p = 200$ and varying $n \in \{25, 50, 75, \dots, 175 \}$. The horizontal solid lines show the mean over $100$ trials, with shading and vertical bars to show $\pm$ one standard error.}
%	\label{fig:bias} 
%\end{figure*}

\begin{figure*}
	\centering  
        \subfloat[Standard normal model.]{\includegraphics[width=0.4\textwidth]{images/sim2/standard_normal.png}\label{fig:standard_normal.2}}
        \qquad \quad
        \subfloat[Spiked model.]{\includegraphics[width=0.4\textwidth]{images/sim2/spiked.png}\label{fig:spiked.2}} 
        %\subfloat[Geometric model.]{\includegraphics[width=0.32\textwidth]{images/sim2/geometric.png}\label{fig:geometric.2}} 
	\caption{Simulation results of the biases of our variance estimator in \eqref{eq:var.estimator.hd} with fixed  $n/p = 0.8$ and varying $n \in \{200, 400, \dots, 1000 \}$. The horizontal solid lines show the mean over $100$ trials, with shading and vertical bars to show $\pm$ one standard error.}
	\label{fig:bias.2} 
\end{figure*}

\begin{figure*}%[t!]
	\centering  
        \subfloat[Standard normal model.]{\includegraphics[width=0.4\textwidth]{images/sim3/standard_normal.png}\label{fig:standard_normal.3}}
        \qquad \quad
        \subfloat[Spiked model.]{\includegraphics[width=0.4\textwidth]{images/sim3/spiked.png}\label{fig:spiked.3}} 
        %\subfloat[Geometric model.]{\includegraphics[width=0.32\textwidth]{images/sim3/geometric.png}\label{fig:geometric.3}} 
	\caption{Simulation results of the biases of our variance estimator in \eqref{eq:var.estimator.hd} with fixed $n/p=160/200$ and varying $\sigma \in \{1, 2, 3, \dots, 10 \}$. The horizontal solid lines show the mean over $100$ trials, with shading and vertical bars to show $\pm$ one standard error.}
	\label{fig:bias.3} 
\end{figure*}

% COVERAGE
\subsection{Simulation IV: Coverage of the proposed variance estimator}\label{sec:simulations.extra} 
%
%We continue our investigation of our homoskedastic variance estimator \eqref{eq:var.estimator.hd} by studying its coverage properties. 

% DGP 
\textit{Data generating process.} 
%
%We largely follow the data generating process described in Section~\ref{sec.sim.1.dgp}. 
%%
%That is, 
%
Following Section~\ref{sec.sim.1.dgp}, we fix $p = 200$, set $\bbeta = p^{-1/2} \cdot \bone_p$, and vary the sample size $n \in \{25, 50, 75, \dots, 175 \}$. 
For each $n$, we generate our training covariates $\bX$ as per the two models outlined in Section~\ref{sec:sim.covariates} with the same parameters as chosen in Section~\ref{sec.sim.1.dgp}. 
We then construct the training responses $\by = \bX \bbeta + \bvarepsilon$ by either sampling 
(i) $\varepsilon_i \stackrel{\text{i.i.d.}}{\sim} \Normal(0, 1)$ 
or 
(ii) $\varepsilon_i \stackrel{\text{i.i.d.}}{\sim} \textsf{U}[0,1]$ for $i \in [n]$. 
For our test data, we use the same sampling procedure for $\bX$ to draw our test covariate $\bx_{n+1} \in \Rb^p$.  
Our parameter is the expected test response $\bbE[y_{n+1}] = \langle \bx_{n+1}, \bbeta \rangle$. 

\section{The Moore-Penrose Pseudoinverse} \label{sec:pseudo_properties} 
%
%\subsection{The Moore-Penrose pseudoinverse}
%
This section provides a brief technical background of the Moore-Penrose pseudoinverse, beginning with its formal definition below. 
For thorough and insightful discussions on the general theory and applications of the pseudoinverse, we refer the interested reader to \cite{albert1972regression}. 
\begin{definition}\label{defn:pseudo}
    For $\bM \in \RR^{m \times n}$, a pseudoinverse of $\bM$ is defined as a matrix $\bM^{\dagger} \in \RR^{n \times m}$ satisfying all of the following for criteria, known as the Moore-Penrose criteria:
    \begin{enumerate} [label=(\roman*)]
        \item 
        $\bM \bM^{\dagger} \bM = \bM$; 
        \item 
        $\bM^{\dagger} \bM \bM^{\dagger} = \bM^{\dagger}$; 
        \item
        $(\bM \bM^{\dagger})^{\top} = \bM \bM^{\dagger}$;
        \item 
        $(\bM^{\dagger} \bM)^{\top} = \bM^{\dagger} \bM$.
    \end{enumerate}
\end{definition}

For any matrix $\bM$, the pseudoinverse $\bM^{\dagger}$ exists and is unique \citep{golub2013matrix}; i.e., there is precisely one matrix $\bM^{\dagger}$ that satisfies the four properties of Definition \ref{defn:pseudo}. 
Moreover, it can be computed using the singular value decomposition; if $\bM = \bU \bS \bV^{\top}$ is a compact singular value decomposition, in which $\bS$ is a square diagonal matrix of size $r \times r$ where $r = \rank(\bM) \leq \min\{m,n\}$, the pseudoinverse takes the form $\bM^{\dagger} = \bV \bS^{-1} \bU^{\top}$.  

\medskip
\noindent \textit{Basic properties.} 
%\textit{Basic properties.} 
Here we review some useful properties of the pseudoinverse.
\begin{enumerate} [label=(\roman*)]
    \item 
    If $\bM$ is invertible, its pseudoinverse is its inverse, i.e., $\bM^{\dagger} = \bM^{-1}$.
    \item 
    The pseudoinverse of the pseudoinverse is the original matrix, i.e., $(\bM^{\dagger})^{\dagger} = \bM$.
    \item 
    Pseudoinverse commutes with transposition: $(\bM^{\top})^{\dagger} = (\bM^{\dagger})^{\top}$. Thus, we may use notations such as $\bM^{\dagger, \top}$ or $\bM^{\top, \dagger}$ exchangeably, omitting parentheses for notational brevity.
    \item 
    The pseudoinverse of a scalar multiple of $\bM$ is the reciprocal multiple of $\bM^{\dagger}$: $(\alpha \bM)^{\dagger} = \alpha^{-1} \bM^{\dagger}$ for $\alpha \neq 0$.
    \item 
    If a column-wise partitioned block matrix $\bM = \begin{bmatrix} \bA & \bB\end{bmatrix}$ is of full column rank (has linearly independent columns), then 
    \begin{align}
        \bM^{\dagger}
            &=\begin{bmatrix} \bA & \bB \end{bmatrix}^{\dagger}
            = \begin{bmatrix}   (\bP_{\bB}^{\perp}\bA)^{\dagger} \\ (\bP_{\bA}^{\perp}\bB)^{\dagger} \end{bmatrix},
                \nonumber\\
        \bM^{\top, \dagger}
            &= \begin{bmatrix} \bA^{\top} \\ \bB^{\top} \end{bmatrix}^{\dagger}
            = \begin{bmatrix}   (\bA^{\top}\bP_{\bB}^{\perp})^{\dagger} & (\bB^{\top}\bP_{\bA}^{\perp})^{\dagger} \end{bmatrix}.
                \label{eqn:block_pseudo}
    \end{align}
\end{enumerate} 

Recall the property $(\bA \bB )^{-1} = \bB^{-1} \bA^{-1}$, which holds for invertible matrices $\bA$ and $\bB$. A similar correspondence exists for the pseudoinverse under certain conditions, although it is not universally applicable. Here, we present several equivalent conditions for this relationship, outlined in \cite{greville1966note}, as a lemma.

\begin{lemma}
    Let $\bA, \bB$ be real matrices. The following are equivalent:
    \begin{enumerate} [label=(\roman*)]
        \item 
        $(\bA \bB)^{\dagger} = \bB^{\dagger} \bA^{\dagger}$.
        \item 
        $\bA^{\dagger}\bA\bB\bB^{\top}\bA^{\top} = \bB\bB^{\top}\bA^{\top}$ and $\bB\bB^{\dagger}\bA^{\top}\bA\bB = \bA^{\top}\bA\bB$.
        \item
        $\bA^{\dagger}\bA\bB\bB^{\top}$ and $\bA^{\top}\bA\bB\bB^{\dagger}$ are both symmetric.
        \item 
        $\bA^{\dagger}\bA\bB\bB^{\top}\bA^{\top}\bA\bB\bB^{\dagger} = \bB\bB^{\top}\bA^{\top}\bA$.
        \item 
        $\bA^{\dagger}\bA\bB = \bB(\bA\bB)^{\dagger}\bA\bB$ and $\bB\bB^{\dagger}\bA^{\top} = \bA^{\top} \bA\bB (\bA\bB)^{\dagger}$.
    \end{enumerate}
\end{lemma}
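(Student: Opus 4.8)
The plan is to treat (i)~$\Leftrightarrow$~(ii) as the core of the argument and then obtain (ii)~$\Leftrightarrow$~(iii)~$\Leftrightarrow$~(iv)~$\Leftrightarrow$~(v) by elementary manipulations. Throughout I would use only the Moore--Penrose criteria of Definition~\ref{defn:pseudo}, the consequences that $\bM^{\dagger}\bM$ and $\bM\bM^{\dagger}$ are symmetric idempotents (the orthogonal projectors onto $\rowsp(\bM)$ and $\colsp(\bM)$), and the identities $\bM^{\dagger}\bM\bM^{\top} = \bM^{\top}$, $\bM\bM^{\dagger}\bM = \bM$ together with their transposes.

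\textbf{Step 1: (i)~$\Leftrightarrow$~(ii).} For the ``$\Leftarrow$'' direction, assume (ii) and check that $\bB^{\dagger}\bA^{\dagger}$ satisfies the four criteria of Definition~\ref{defn:pseudo} with respect to $\bA\bB$. The two ``product'' criteria, $\bA\bB(\bB^{\dagger}\bA^{\dagger})\bA\bB = \bA\bB$ and $(\bB^{\dagger}\bA^{\dagger})\bA\bB(\bB^{\dagger}\bA^{\dagger}) = \bB^{\dagger}\bA^{\dagger}$, follow by inserting $\bB\bB^{\dagger}$ (respectively $\bA^{\dagger}\bA$), using the two identities of (ii) to move the inserted projector, and collapsing via $\bB\bB^{\dagger}\bB = \bB$ and $\bA\bA^{\dagger}\bA = \bA$. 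The two ``symmetry'' criteria, i.e.\ that $\bA\bB\bB^{\dagger}\bA^{\dagger}$ and $\bB^{\dagger}\bA^{\dagger}\bA\bB$ are symmetric, are where (ii) is genuinely needed: transpose, expand using the symmetry of $\bA\bA^{\dagger}$, $\bA^{\dagger}\bA$, $\bB\bB^{\dagger}$, $\bB^{\dagger}\bB$, and then substitute the identities of (ii) to recognize the result as the original expression. For the ``$\Rightarrow$'' direction, assume $(\bA\bB)^{\dagger} = \bB^{\dagger}\bA^{\dagger}$; writing out the four Moore--Penrose criteria for $\bA\bB$ with this explicit pseudoinverse and then left/right multiplying by well-chosen factors (by $\bA^{\top}$, $\bB^{\top}$, $\bA^{\dagger}$, $\bB^{\dagger}$) and simplifying with the projector identities isolates the two equations of (ii).

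\textbf{Step 2: the algebraic web (ii)~$\Leftrightarrow$~(iii)~$\Leftrightarrow$~(iv)~$\Leftrightarrow$~(v).} Since $(\bA^{\dagger}\bA)^{\top} = \bA^{\dagger}\bA$ and $(\bB\bB^{\dagger})^{\top} = \bB\bB^{\dagger}$, symmetry of $\bA^{\dagger}\bA\bB\bB^{\top}$ is equivalent to the commutation $\bA^{\dagger}\bA\bB\bB^{\top} = \bB\bB^{\top}\bA^{\dagger}\bA$; right-multiplying by $\bA^{\top}$ and using $\bA^{\dagger}\bA\bA^{\top} = \bA^{\top}$ yields the first equation of (ii), and right-multiplying that equation by $(\bA^{\dagger})^{\top}$ (using $\bA^{\top}(\bA^{\dagger})^{\top} = \bA^{\dagger}\bA$) reverses the step; the second equation of (ii) corresponds to symmetry of $\bA^{\top}\bA\bB\bB^{\dagger}$ by the mirror computation. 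For (iii)~$\Leftrightarrow$~(iv), read (iii) as ``$\bA^{\dagger}\bA$ commutes with $\bB\bB^{\top}$ and $\bB\bB^{\dagger}$ commutes with $\bA^{\top}\bA$'', push these projectors through the left-hand side of (iv), and absorb them using $\bA^{\dagger}\bA\bA^{\top}\bA = \bA^{\top}\bA$ and $\bB\bB^{\dagger}\bB\bB^{\top} = \bB\bB^{\top}$ to collapse it to $\bB\bB^{\top}\bA^{\top}\bA$; conversely, multiplying (iv) by suitable projectors on the left and right (and using its transpose) recovers each commutation. Finally, (ii)~$\Leftrightarrow$~(v) follows by substituting $(\bA\bB)^{\dagger} = \bB^{\dagger}\bA^{\dagger}$ from the already-established (i)~$\Leftrightarrow$~(ii) into (v) and reducing to (ii) with the same projector identities, while conversely (v) pins down $\bB(\bA\bB)^{\dagger}\bA\bB$ and $\bA^{\top}\bA\bB(\bA\bB)^{\dagger}$ enough to re-derive the two equations of (ii).

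\textbf{Main obstacle.} The only place requiring more than bookkeeping is the verification of the symmetry criteria in the ``$\Leftarrow$'' half of Step~1: the reverse-order law $(\bA\bB)^{\dagger} = \bB^{\dagger}\bA^{\dagger}$ genuinely fails in general, so the cancellations there must invoke both equations of (ii) in an essential way and cannot be deduced from the generic pseudoinverse identities alone. Everything else is a careful but routine chain of substitutions built from the idempotence and symmetry of $\bM^{\dagger}\bM$ and $\bM\bM^{\dagger}$; alternatively, one may shorten the write-up by proving the single cycle (i)~$\Rightarrow$~(ii)~$\Rightarrow$~(iii)~$\Rightarrow$~(iv)~$\Rightarrow$~(v)~$\Rightarrow$~(i), which avoids half of the reverse implications above.
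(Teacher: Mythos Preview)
The paper does not actually prove this lemma: it is stated in Appendix~\ref{sec:pseudo_properties} as a background fact and attributed to Greville (1966) via the citation \texttt{[greville1966note]}, with no proof given. So there is no ``paper's own proof'' against which to compare your proposal.

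Your outline follows the standard route for this classical result and is broadly sound. The equivalences (ii)~$\Leftrightarrow$~(iii) and (iii)~$\Rightarrow$~(iv) work exactly as you describe, using that $\bA^{\dagger}\bA$ and $\bB\bB^{\dagger}$ are symmetric idempotents together with $\bA^{\dagger}\bA\bA^{\top}=\bA^{\top}$ and $\bB\bB^{\top}\bB\bB^{\dagger}=\bB\bB^{\top}$. The one place where your sketch is thin is the reverse direction (iv)~$\Rightarrow$~(iii): ``multiplying (iv) by suitable projectors on the left and right'' is not quite enough on its own, because (iv) is a single equation while (iii) encodes two independent commutation relations. In Greville's argument one first observes that the right-hand side $\bB\bB^{\top}\bA^{\top}\bA$ of (iv) is symmetric, hence so is the left-hand side; expanding that symmetry and then separately pre/post-multiplying by $\bA^{\dagger}\bA$ and $\bB\bB^{\dagger}$ is what disentangles the two commutations. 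Similarly, your treatment of (v) leans on having already established (i)~$\Leftrightarrow$~(ii), which makes the logical flow a bit circular unless you adopt the single-cycle strategy you mention at the end; the cycle (i)~$\Rightarrow$~(v)~$\Rightarrow$~(ii)~$\Rightarrow$~(iii)~$\Rightarrow$~(iv)~$\Rightarrow$~(i) (or a permutation thereof) is indeed the cleanest way to organize this.
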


\noindent \textit{Perturbation theory.} 
%\textit{Perturbation theory.} 
We gather well-established results from matrix perturbation theory \cite[Theorem 3.2]{stewart1977perturbation} and present them as a lemma. 
%\begin{lemma}%[{\cite[Theorem 3.2]{stewart1977perturbation}}]
\begin{lemma} \label{lem:pseudoinverse}
    Let $\bA, \bB \in \RR^{m \times n}$. Then
    \begin{align}
        \bB^{\dagger} - \bA^{\dagger}
            &= - \bB^{\dagger} \bP_{\bB} ( \bB - \bA ) \bP_{\bA^{\top}} \bA^{\dagger}
                + \bB^{\dagger} \bP_{\bB} \bP_{\bA}^{\perp} 
                - \bP_{\bB^{\top}}^{\perp} \bP_{\bA^{\top}} \bA^{\dagger},
                    \label{eqn:pinv_perturb.a}\\
        \bB^{\dagger} - \bA^{\dagger}
            &= - \bB^{\dagger} \bP_{\bB} ( \bB - \bA ) \bP_{\bA^{\top}} \bA^{\dagger}
                + ( \bB^\top \bB )^{\dagger} \bP_{\bB^{\top}} \left( \bB - \bA \right)^\top \bP_{\bA}^{\perp}
                \\ & \quad - \bP_{\bB^{\top}}^{\perp} ( \bB - \bA )^\top \bP_{\bA} ( \bA \bA^\top )^{\dagger}.
                    \label{eqn:pinv_perturb.b}
    \end{align}
\end{lemma}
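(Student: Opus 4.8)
This is Theorem~3.2 of \cite{stewart1977perturbation}; the argument is a short manipulation using only the Moore--Penrose criteria of Definition~\ref{defn:pseudo}. Write $\bE = \bB - \bA$. Throughout, one uses the criteria in the absorbed forms $\bM^{\dagger} = \bM^{\dagger}\bP_{\bM}$, $\bM^{\dagger} = \bP_{\bM^{\top}}\bM^{\dagger}$, $\bM\bM^{\dagger} = \bP_{\bM}$, $\bM^{\dagger}\bM = \bP_{\bM^{\top}}$, together with the two factorizations $\bM^{\dagger} = (\bM^{\top}\bM)^{\dagger}\bM^{\top} = \bM^{\top}(\bM\bM^{\top})^{\dagger}$ (read off a compact SVD $\bM = \bU\bS\bV^{\top}$), the further absorptions $(\bM^{\top}\bM)^{\dagger} = (\bM^{\top}\bM)^{\dagger}\bP_{\bM^{\top}}$ and $\bP_{\bM}(\bM\bM^{\top})^{\dagger} = (\bM\bM^{\top})^{\dagger}$, and the orthogonality relations $\bM^{\top}\bP_{\bM}^{\perp} = \bzero$ and $\bP_{\bM^{\top}}^{\perp}\bM^{\top} = \bzero$. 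The starting point is the telescoping identity, valid for any $\bA,\bB$ of the same shape,
\[
    \bB^{\dagger} - \bA^{\dagger}
        = \bB^{\dagger}(\bA - \bB)\bA^{\dagger}
            + \big( \bB^{\dagger}\bB - \bI \big)\bA^{\dagger}
            + \bB^{\dagger}\big( \bI - \bA\bA^{\dagger} \big),
\]
which follows by expanding the right-hand side and noting that the mixed products $\pm\bB^{\dagger}\bB\bA^{\dagger}$ and $\pm\bB^{\dagger}\bA\bA^{\dagger}$ cancel in pairs.

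To obtain \eqref{eqn:pinv_perturb.a}, I would massage the three summands above. In the first, insert $\bB^{\dagger} = \bB^{\dagger}\bP_{\bB}$ and $\bA^{\dagger} = \bP_{\bA^{\top}}\bA^{\dagger}$ to get $-\bB^{\dagger}\bP_{\bB}\bE\bP_{\bA^{\top}}\bA^{\dagger}$. In the second, $\bB^{\dagger}\bB - \bI = -\bP_{\bB^{\top}}^{\perp}$ and $\bA^{\dagger} = \bP_{\bA^{\top}}\bA^{\dagger}$ give $-\bP_{\bB^{\top}}^{\perp}\bP_{\bA^{\top}}\bA^{\dagger}$. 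In the third, $\bI - \bA\bA^{\dagger} = \bP_{\bA}^{\perp}$ and $\bB^{\dagger} = \bB^{\dagger}\bP_{\bB}$ give $\bB^{\dagger}\bP_{\bB}\bP_{\bA}^{\perp}$. Summing the three reconstitutes \eqref{eqn:pinv_perturb.a}.

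To obtain \eqref{eqn:pinv_perturb.b}, keep the first summand and re-express the other two. For $\bB^{\dagger}\bP_{\bB}\bP_{\bA}^{\perp} = \bB^{\dagger}\bP_{\bA}^{\perp}$, substitute $\bB^{\dagger} = (\bB^{\top}\bB)^{\dagger}\bB^{\top}$ and replace $\bB^{\top}\bP_{\bA}^{\perp}$ by $\bE^{\top}\bP_{\bA}^{\perp}$ (legitimate since $\bA^{\top}\bP_{\bA}^{\perp} = \bzero$), then reinsert $\bP_{\bB^{\top}}$ (which $(\bB^{\top}\bB)^{\dagger}$ absorbs): this is the middle term of \eqref{eqn:pinv_perturb.b}. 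For $-\bP_{\bB^{\top}}^{\perp}\bP_{\bA^{\top}}\bA^{\dagger} = -\bP_{\bB^{\top}}^{\perp}\bA^{\dagger}$, substitute $\bA^{\dagger} = \bA^{\top}(\bA\bA^{\top})^{\dagger}$, replace $\bP_{\bB^{\top}}^{\perp}\bA^{\top}$ by $-\bP_{\bB^{\top}}^{\perp}\bE^{\top}$ (since $\bP_{\bB^{\top}}^{\perp}\bB^{\top} = \bzero$), and reinsert $\bP_{\bA}$ (which $(\bA\bA^{\top})^{\dagger}$ absorbs), giving the last term of \eqref{eqn:pinv_perturb.b}.

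\emph{Main obstacle.} There is no conceptual difficulty here: the work is entirely in the bookkeeping of which orthogonal projectors may be inserted or deleted at each stage, and in the repeated use of the observation that $\bA^{\top}$ and $\bE^{\top} = (\bB-\bA)^{\top}$ differ by $\bB^{\top}$, which is annihilated on the left by $\bP_{\bB^{\top}}^{\perp}$ (dually, $\bA$ and $\bE$ differ by $\bB$, annihilated by $\bP_{\bB}^{\perp}$). The step most prone to error is tracking the signs when passing from the absorbed form in \eqref{eqn:pinv_perturb.a} to the form in \eqref{eqn:pinv_perturb.b}.
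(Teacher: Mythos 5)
The paper does not prove this lemma at all --- it is quoted verbatim from Stewart (1977) as a known result --- so there is no in-paper argument to compare against. Your strategy (the telescoping identity $\bB^{\dagger}-\bA^{\dagger}=\bB^{\dagger}(\bA-\bB)\bA^{\dagger}+(\bB^{\dagger}\bB-\bI)\bA^{\dagger}+\bB^{\dagger}(\bI-\bA\bA^{\dagger})$ followed by projector absorption) is the standard derivation, and it correctly yields \eqref{eqn:pinv_perturb.a} and the first two terms of \eqref{eqn:pinv_perturb.b}.

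However, there is a concrete problem with your last step. Carrying out exactly the manipulation you describe, $-\bP_{\bB^{\top}}^{\perp}\bA^{\dagger}=-\bP_{\bB^{\top}}^{\perp}\bA^{\top}(\bA\bA^{\top})^{\dagger}$ and $\bP_{\bB^{\top}}^{\perp}\bA^{\top}=-\bP_{\bB^{\top}}^{\perp}\bE^{\top}$, so the two minus signs cancel and you obtain $+\bP_{\bB^{\top}}^{\perp}(\bB-\bA)^{\top}\bP_{\bA}(\bA\bA^{\top})^{\dagger}$ --- the \emph{negative} of the last term as stated in \eqref{eqn:pinv_perturb.b}. You assert that your computation "gives the last term of \eqref{eqn:pinv_perturb.b}" without noticing this discrepancy. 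In fact your derivation is the correct one and the stated formula is the one in error: the classical Wedin identity has a plus sign on this term, and the two displays in the lemma are mutually inconsistent as written, since the last term of \eqref{eqn:pinv_perturb.a} equals $-\bP_{\bB^{\top}}^{\perp}\bA^{\dagger}$ while the last term of \eqref{eqn:pinv_perturb.b} as stated equals $+\bP_{\bB^{\top}}^{\perp}\bA^{\dagger}$. A one-line counterexample is $\bA=\begin{bmatrix}1 & 0\end{bmatrix}$, $\bB=\begin{bmatrix}0 & 0\end{bmatrix}$: then $\bB^{\dagger}-\bA^{\dagger}=\begin{bmatrix}-1\\0\end{bmatrix}$, which \eqref{eqn:pinv_perturb.a} reproduces but \eqref{eqn:pinv_perturb.b} as stated evaluates to $\begin{bmatrix}1\\0\end{bmatrix}$. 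You should either flag the sign typo in the target statement or, if you intend to prove it as written, recognize that no correct argument can do so. (This does not affect the paper's results, since only \eqref{eqn:pinv_perturb.a} is invoked, in the proof of Theorem~\ref{thm:subsampled_rows}.)
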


% \section{Deferred proof from Section~\ref{sec:setup} } \label{app:proofs_beta_hat}

\section{Deferred proofs from Section~\ref{sec:row_partition}} \label{sec:proofs.rows}
\subsection{Context}
In this section, we present all proofs postponed from Section~\ref{sec:row_partition}. 
Recall from Remark~\ref{remark:hd_classical} that we have presented results for both classical regime and high-dimensional regime in many of our theorem statements, but most of the classical results are already known in the literature. 
Thus, we primarily derive results for the high-dimensional regime. 
For those results within the classical regime that are novel, we will explicitly highlight its novelty and also provide its proof.

% THEOREM 
\subsection{Proof of Theorem \ref{thm:subsampled_rows}}\label{sec:proof_subsampled_ols}

\begin{proof}%[Proof of Theorem \ref{thm:subsampled_rows}]
    To prove \eqref{eq:beta.hd}, recall that $\tbX_{\cI,\star} \in \RR^{n \times p}$ is the matrix such that the $i$-th row of $\tbX_{\cI,\star}$ is equal to the $i$-th row of $\bX$ if and only if $i \in \cI$ ($\bzero_p$ otherwise). 
    Under Assumption \ref{assump:row_rank}, $\bX$ has independent rows. 
    Thus, we have 
    \begin{align*}
        \hbbeta^{(\cI, \star)} 
            &= (\bX_{\cI,\star})^{\dagger} \by_{\cI}\\
            &= \begin{bmatrix} (\bX_{\cI, \star})^{\dagger} & \bzero \end{bmatrix} \begin{bmatrix} \by_{\cI} \\ \by_{\cI^c} \end{bmatrix}\\
            &= \begin{bmatrix} \bX_{\cI, \star} \\ \bzero \end{bmatrix}^{\dagger} \begin{bmatrix} \by_{\cI} \\ \by_{\cI^c} \end{bmatrix}
                &&\because \eqref{eqn:block_pseudo}\\
            &= (\tbX_{\cI,\star})^{\dagger} \by.
    \end{align*}
    Therefore, $\hbbeta^{(\cI, \star)} - \hbbeta = \left( (\tbX_{\cI,\star})^{\dagger} - \bX^{\dagger} \right) \by$.

    Let $\cI^c \coloneqq [n] \setminus \cI$ denote the complement of $\cI$ in $[n]$. 
    Applying Lemma \ref{lem:pseudoinverse} with $\bA = \bX$ and $\bB = \tbX_{\cI,\star}$, we deduce from \eqref{eqn:pinv_perturb.a} that
    \begin{align}
        &\tbX_{\cI,\star}^{\dagger} - \bX^{\dagger}
            \\ &\qquad= - \underbrace{ (\tbX_{\cI,\star})^{\dagger} \bP_{\tbX_{\cI,\star}} }_{=(\tbX_{\cI,\star})^{\dagger} } \underbrace{ \big( \tbX_{\cI,\star} - \bX \big) }_{=-\tbX_{\cI^c,\star}}\underbrace{ \bP_{\bX^{\top}} \bX^{\dagger} }_{=\bX^{\dagger}}
                + (\tbX_{\cI,\star})^{\dagger} \underbrace{ \bP_{\tbX_{\cI,\star}} \bP_{\bX}^{\perp} }_{=\bzero}
                - \bP_{(\tbX_{\cI,\star})^{\top}}^{\perp} \underbrace{ \bP_{\bX^{\top}} \bX^{\dagger} }_{=\bX^{\dagger}}
                    \nonumber\\
            &\qquad= \left( (\tbX_{\cI,\star})^{\dagger} \tbX_{\cI^c,\star} - \bP_{(\tbX_{\cI,\star})^{\top}}^{\perp} \right) \bX^{\dagger}
                    \nonumber\\
            &\qquad= - \bP_{(\tbX_{\cI,\star})^{\top}}^{\perp} \bX^{\dagger}, 
                \label{eqn:proof_thm1_diff}
                % &&\because \tbX_{\cI}^{\dagger} \tbX_{\cI^c} = 0
    \end{align}
    where the last equality follows from 
    \[
        (\tbX_{\cI,\star})^{\dagger} \tbX_{\cI^c,\star}
            = \left( (\tbX_{\cI,\star})^{\top} \tbX_{\cI,\star} \right)^{\dagger} (\tbX_{\cI,\star})^{\top} \tbX_{\cI^c,\star} 
            = \bzero.
    \]
    Therefore, we conclude that 
    \begin{align*}
        \hbbeta^{(\cI, \star)}
            &= \big( \hbbeta^{(\cI, \star)} - \hbbeta \big) + \hbbeta\\
            &= - \bP_{(\tbX_{\cI,\star})^{\top}}^{\perp} \bX^{\dagger} \by + \bX^{\dagger} \by  &&\because \eqref{eqn:proof_thm1_diff}\\
            &= \bP_{(\tbX_{\cI,\star})^{\top}} \bX^{\dagger} \by\\
            &= \bP_{(\tbX_{\cI,\star})^{\top}} \hbbeta              &&\because \hbbeta = \bX^{\dagger}\by\\
            &= \bP_{(\bX_{\cI,\star})^{\top}} \hbbeta               && \because \rowsp(\tbX_{\cI,\star}) = \rowsp(\bX_{\cI,\star})\\
            &= (\bX_{\cI,\star})^{\dagger} \bX_{\cI,\star} \hbbeta = \Pi_{\rowsp(\bX_{\cI,\star})}\big( \hbbeta \big).
    \end{align*}
    
    To prove \eqref{eq:beta.hd.2}, define two vector subspaces of $\RR^p$ as follows: 
    \begin{align*}
        \cV_1 &\coloneqq \vspan \left\{ \bX^{\top} \be_i: i \in \cI \right\} = \rowsp(\bX_{\cI,\star}) \subseteq \rowsp(\bX),\\
        \cV_2 &\coloneqq \vspan \left\{ \bX^{\dagger} \be_i: i \in \cI^c \right\} = \colsp\big( \bX^{\dagger}_{\star,\cI^c} \big) \subseteq \colsp(\bX^{\dagger}).
    \end{align*}
    Note that if Assumption \ref{assump:row_rank} holds, then the following three statements are true:
    \begin{itemize} 
        \item[(i)] \label{obs:full}
        $\rowsp(\bX) = \colsp(\bX^{\dagger})$ is an $n$-dimensional subspace of $\RR^p$.
        \item[(ii)] \label{obs:dimension}
        $\dim \cV_1 = |\cI|$ and $\dim \cV_2 = |\cI^c| = n - |\cI|$.
        \item[(iii)] \label{obs:orthogonal} 
        $\cV_1 \perp \cV_2$, i.e., $\langle v_1, v_2 \rangle = 0$ for all $(v_1, v_2) \in \cV_1 \times \cV_2$. 
        This can be readily verified by observing $\bP_{\bX} = \bI_n$.
    \end{itemize}
    Let $\cX \coloneqq \big\{ \bX^{\top} \be_i: i \in \cI \big\} \cup \big\{ \bX^{\dagger} \be_i: i \in \cI^c \big\} \subset \RR^p$ and observe that $\cX$ is linearly independent due to (ii) and (iii). 
    Since $\cX \subseteq \rowsp(\bX)$ and $|\cX| = \dim \rowsp(\bX)$, it follows from (i) that $\vspan(\cX) = \rowsp(\bX)$. 
    Note that $\vspan(\cX) = \cV_1 + \cV_2$. 
    Therefore, $\cV_1$ is the orthogonal complement of $\cV_2$ in $\rowsp(\bX)$. 
    Since $\hbbeta = \bX^{\dagger} \by \in \rowsp(\bX)$, we have $\Pi_{\cV_1}(\hbbeta) = \Pi_{\cV_2^{\perp}}(\hbbeta)$. 
    Consequently, 
    \begin{align*}
        \hbbeta^{(\cI, \star)}
            &= \Pi_{\cV_1}(\hbbeta)
                && \because \eqref{eq:beta.hd} \\%\text{Theorem~\ref{thm:subsampled_rows}}\\
            &= \Pi_{\cV_2^{\perp}}(\hbbeta)\\
            &= \left\{ \bI_p - \bX^{\dagger}_{\star,\cI^c} \big(\bX^{\dagger}_{\star,\cI^c}\big)^{\dagger} \right\} \cdot \hbbeta.
    \end{align*}
    This gives our desired result.

\end{proof}

\subsection{Proof of Corollary~\ref{cor:loo}} \label{sec:proof.alt.beta.loo} 

\textit{Proof 1: A simple proof of Corollary~\ref{cor:loo}.} 
Corollary~\ref{cor:loo} immediately follows from \eqref{eq:beta.hd.2} of Theorem~\ref{thm:subsampled_rows}. %Corollary \ref{coro:subsampled_ols.2}.

% proof 
\begin{proof} [Proof of Corollary \ref{cor:loo}]
Let $\cI = \{i\}^c$ and observe that $\bX^{\dagger}_{\star, \cI^c} = \bX^{\dagger}\be_i$. 
Then it suffices to observe that $\bv^{\dagger} = \| \bv \|_2^{-2} \cdot \bv^{\top}$ for any nonzero vector $\bv$ by the definition of pseudoinverse.
\end{proof} 

\textit{Proof 2: Alternative proof of Corollary~\ref{cor:loo}} 
We also provide an alternative, direct proof of Corollary~\ref{cor:loo} in Appendix~\ref{sec:proof.alt.beta.loo}. 
This alternative proof relies on the generalized Sherman-Morrison formula for the pseudoinverse \citep{gen_inverse_meyers}, which may be of independent interest. 

We present an alternative, direct proof of Corollary~\ref{cor:loo}. 
First, we state a helpful classical result on the pseudoinverse of a rank-one perturbation of a matrix \cite[Theorem 6]{gen_inverse_meyers}. 
 \begin{lemma} %[{\cite[Theorem 6]{gen_inverse_meyers}}] 
 \label{lemma:meyer}
     Let $\bA \in \RR^{n \times p}$, $\bc \in \RR^n$, and $\bd \in \RR^p$. 
     If $\bc \in \colsp(\bA)$, $\bd \in \rowsp(\bA)$, and $1 + \bd^{\top} \bA^\dagger \bc = 0$, then
     \begin{align}
     	(\bA + \bc \bd^{\top})^\dagger = \bA^\dagger - \bk \bk^\dagger \bA^\dagger - \bA^\dagger \bh^{\dagger, \top} \bh^{\top} 
     	+ (\bk^\dagger \bA^\dagger \bh^{\dagger,\top}) \bk \bh^{\top}, \label{eq:meyer.1} 
     \end{align} 
     where $\bk = \bA^\dagger \bc \in \RR^{p}$ and $\bh = \bA^{\dagger, \top} \bd \in \RR^{n}$. 
 \end{lemma}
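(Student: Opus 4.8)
The plan is to recognise the right-hand side of \eqref{eq:meyer.1} as a ``sandwiched'' projector expression and then verify the four Moore--Penrose criteria of Definition~\ref{defn:pseudo} for it relative to $\bB \coloneqq \bA + \bc\bd^{\top}$; uniqueness of the pseudoinverse then closes the argument.

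First I would unpack the hypotheses. Since $1 + \bd^{\top}\bA^{\dagger}\bc = 0$, neither $\bc$ nor $\bd$ is zero, and the range conditions $\bc \in \colsp(\bA)$ and $\bd \in \rowsp(\bA)$ translate into the usable identities $\bA\bk = \bc$ and $\bA^{\top}\bh = \bd$ (equivalently $\bd^{\top} = \bh^{\top}\bA$); in particular $\bk \ne \bzero$ and $\bh \ne \bzero$, so $\bk^{\dagger} = \|\bk\|_2^{-2}\bk^{\top}$ and $\bh^{\dagger} = \|\bh\|_2^{-2}\bh^{\top}$ have the usual rank-one form. I would also record $\bP_{\bA}\bc = \bc$, $\bP_{\bA^{\top}}\bk = \bk$, $\bP_{\bA}\bh = \bh$, $\bd^{\top}\bA^{\dagger} = \bh^{\top}$, the scalar relations $\bh^{\top}\bc = \bd^{\top}\bk = \bd^{\top}\bA^{\dagger}\bc = -1$, and the two facts that drive the proof: $\bB\bk = (1 + \bd^{\top}\bk)\,\bc = \bzero$ and $\bh^{\top}\bB = (1 + \bh^{\top}\bc)\,\bd^{\top} = \bzero$.

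Next I would note that, on expanding the product and using that $\bk^{\dagger}\bA^{\dagger}\bh^{\dagger,\top} \in \RR$ is a scalar, the right-hand side of \eqref{eq:meyer.1} is exactly
\[
    \bM \coloneqq \big( \bI_p - \bk\bk^{\dagger} \big)\,\bA^{\dagger}\,\big( \bI_n - \bh^{\dagger,\top}\bh^{\top} \big) = \bP_{\bk}^{\perp}\,\bA^{\dagger}\,\bP_{\bh}^{\perp},
\]
where $\bP_{\bk}^{\perp}$, $\bP_{\bh}^{\perp}$ denote the orthogonal projectors onto $\bk^{\perp} \subseteq \RR^p$ and $\bh^{\perp} \subseteq \RR^n$. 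The verification is then short: $\bB\bk = \bzero$ gives $\bB\bP_{\bk}^{\perp} = \bB$, and together with $\bB\bA^{\dagger} = \bP_{\bA} + \bc\bh^{\top}$ and $\bh^{\top}\bP_{\bh}^{\perp} = \bzero$ this yields $\bB\bM = \bP_{\bA} - \bh^{\dagger,\top}\bh^{\top}$, which is symmetric, and then $\bh^{\top}\bB = \bzero$ with $\bP_{\bA}\bB = \bB$ gives $\bB\bM\bB = \bB$; symmetrically, $\bh^{\top}\bB = \bzero$ gives $\bP_{\bh}^{\perp}\bB = \bB$, and with $\bA^{\dagger}\bB = \bP_{\bA^{\top}} + \bk\bd^{\top}$ and $\bP_{\bk}^{\perp}\bk = \bzero$ one gets $\bM\bB = \bP_{\bA^{\top}} - \bk\bk^{\dagger}$, also symmetric, while $\colsp(\bM) \subseteq \rowsp(\bA)$ (because $\colsp(\bA^{\dagger}) = \rowsp(\bA)$ and $\bP_{\bk}^{\perp}$ preserves $\rowsp(\bA)$, as $\bk \in \rowsp(\bA)$) and $\bk^{\top}\bM = \bzero$ give $\bM\bB\bM = \bM$. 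By uniqueness of the Moore--Penrose pseudoinverse, $\bB^{\dagger} = \bM$, which is \eqref{eq:meyer.1}. (Alternatively, passing to a compact SVD $\bA = \bU\bS\bV^{\top}$ reduces the claim to the corresponding identity for the pseudoinverse of the singular $r \times r$ matrix $\bS + \bU^{\top}\bc\,\bd^{\top}\bV$; the direct route above avoids even that.)

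I do not expect a genuine obstacle here: every manipulation is an elementary rank-one projector identity. The two points that need care are correctly converting the range hypotheses $\bc \in \colsp(\bA)$ and $\bd \in \rowsp(\bA)$ into the working identities above, and checking at the outset that $\bk$ and $\bh$ are nonzero so that $\bk^{\dagger}$, $\bh^{\dagger}$ take the claimed rank-one form (both consequences of $1 + \bd^{\top}\bA^{\dagger}\bc = 0$). As the statement already indicates, one may equally well simply cite \cite[Theorem~6]{gen_inverse_meyers}.
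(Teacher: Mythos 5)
Your proof is correct. Note, however, that the paper does not prove this lemma at all: it is stated purely as a citation of \cite[Theorem 6]{gen_inverse_meyers}, so there is no "paper's proof" to compare against. What you have supplied is a self-contained verification that the right-hand side of \eqref{eq:meyer.1}, rewritten as the sandwiched projector $\bM = (\bI_p - \bk\bk^{\dagger})\,\bA^{\dagger}\,(\bI_n - \bh^{\dagger,\top}\bh^{\top})$, satisfies all four Moore--Penrose criteria for $\bB = \bA + \bc\bd^{\top}$. The key identities you isolate — $\bA\bk = \bc$ and $\bd^{\top} = \bh^{\top}\bA$ from the range hypotheses, $\bd^{\top}\bk = \bh^{\top}\bc = -1$ from the scalar condition, and hence $\bB\bk = \bzero$ and $\bh^{\top}\bB = \bzero$ — are exactly what make the four criteria reduce to one-line projector computations ($\bB\bM = \bP_{\bA} - \bh^{\dagger,\top}\bh^{\top}$ and $\bM\bB = \bP_{\bA^{\top}} - \bk\bk^{\dagger}$, both symmetric, with $\bB\bM\bB = \bB$ and $\bM\bB\bM = \bM$ following from $\bP_{\bA}\bB = \bB$, $\colsp(\bM) \subseteq \rowsp(\bA)$, and $\bk^{\top}\bM = \bzero$). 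This buys the paper a proof it currently outsources, at essentially no cost in length; the only thing to double-check, which you did, is that $\bk, \bh \neq \bzero$ so the rank-one pseudoinverses $\bk^{\dagger} = \|\bk\|_2^{-2}\bk^{\top}$ and $\bh^{\dagger} = \|\bh\|_2^{-2}\bh^{\top}$ are well defined.
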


\begin{proof} [Alternative proof of Corollary \ref{cor:loo}]
Let $\bM = \bX \bX^{\top}$ be the Gram matrix of $\{ \bx_1, \dots, \bx_n \} \subset \RR^p$, the rows of $\bX$, such that $\bM_{ij} = \langle \bx_i, \bx_j \rangle$. 
Recall that $\bX_{\sim i} \coloneqq \bX_{\{i\}^c,\star} \in \RR^{(n-1) \times p}$ and $\by_{\sim i} = \by_{\{i\}^c} \in \Rb^{n-1}$ denote the leave-$i$-out data. 
Observe that
\begin{equation}\label{eqn:loo.products}
    \bX_{\si}^{\top} \bX_{\si} = \bX^{\top} \bX - \bx_i \bx_i^{\top}
    \qquad\text{and}\qquad
    \bX_{\si}^{\top} \by_{\si} = \bX^{\top} \by - y_i \bx_i = \bX^{\top} \big( \bI_n - \be_i \be_i^{\top} \big) \by,
\end{equation}
where we recall that $\be_i \in \RR^n$ is the $i$-th standard basis vector in $\RR^n$.

We apply Lemma \ref{lemma:meyer} with $\bA = \bX^{\top} \bX$, $\bc = \bx_i = \bX^{\top} \be_i$, and $\bd = -\bx_i = - \bX^{\top} \be_i$. Then, 
\[
    \bk = \bA^{\dagger} \bc = \big( \bX^{\top} \bX \big)^{\dagger} \bX^{\top} \be_i = \bX^{\dagger} \be_i
    \qquad\text{and}\qquad
    \bh = \bA^{\dagger,\top} \bd = - \bX^{\dagger} \be_i
\]
because $\bA^{\dagger} = \bX^{\dagger} (\bX^{\top})^{\dagger}$. 
Since $\bv^{\dagger} = \| \bv \|_2^{-2} \cdot \bv^{\top}$ for any vector $\bv$, we observe that 
\begin{align*}
    \bk \bk^{\dagger} \bA^{\dagger} 
         &= \frac{ \bk \bk^{\top} }{ \| \bk \|_2^2 } \bA^{\dagger}  
         = \frac{ \bX^{\dagger} \be_i \be_i^{\top} (\bX^{\dagger})^{\top} }{ \be_i^{\top} (\bX^{\dagger})^{\top} \bX^{\dagger} \be_i } \big( \bX^{\top} \bX \big)^{\dagger}
         = \bX^{\dagger} \frac{ \be_i \be^{\top}_i}{\be^{\top}_i \bM^{\dagger} \be_i } \bM^{\dagger} ( \bX^{\top} )^{\dagger},\\
    \bA^{\dagger} \bh^{\dagger, \top} \bh^{\top} 
         &= \bA^{\dagger} \frac{ \bh \bh^{\top} }{ \| \bh \|_2^2 }  
         =  \big( \bX^{\top} \bX \big)^{\dagger} \frac{ \bX^{\dagger} \be_i \be_i^{\top} (\bX^{\dagger})^{\top} }{ \be_i^{\top} (\bX^{\dagger})^{\top} \bX^{\dagger} \be_i } 
         =  \bX^{\dagger} \bM^{\dagger}  \frac{ \be_i \be^{\top}_i}{\be^{\top}_i \bM^{\dagger} \be_i } ( \bX^{\top} )^{\dagger},\\
    (\bk^\dagger \bA^\dagger \bh^{\dagger,\top}) \bk \bh^{\top}
         &= \frac{\bk^{\top} \bA^{\dagger} \bh }{ \| \bk\|_2^2 \| \bh \|_2^2 } \bk \bh^{\top}
         = \frac{ \be_i^{\top} (\bX^{\dagger})^{\top} \big( \bX^{\top} \bX )^{\dagger} \bX^{\dagger} \be_i }{ \big( \be_i^{\top} \bM^{\dagger} \be_i \big)^2 }
             \cdot \bX^{\dagger} \be_i \be_i^{\top} (\bX^{\dagger})^{\top}
         \\ & = \frac{ \be_i^{\top} {\bM^{\dagger}}^2 \be_i }{ \big( \be_i^{\top} \bM^{\dagger} \be_i \big)^2 }
             \cdot \bX^{\dagger} \be_i \be_i^{\top} (\bX^{\dagger})^{\top}.
\end{align*}
Thereafter, applying Lemma~\ref{lemma:meyer}, we obtain that
\begin{align}
    \big( \bX_{\si}^{\top} \bX_{\si} \big)^{\dagger}
         &= \big(\bX^{\top} \bX - \bx_i \bx_i^{\top}\big)^{\dagger} \\
         &= \big( \bX^{\top} \bX \big)^{\dagger}
             - \bX^{\dagger} \frac{ \be_i \be^{\top}_i}{\be^{\top}_i \bM^{\dagger} \be_i } \bM^{\dagger} ( \bX^{\top} )^{\dagger}
             - \bX^{\dagger} \bM^{\dagger}  \frac{ \be_i \be^{\top}_i}{\be^{\top}_i \bM^{\dagger} \be_i } ( \bX^{\top} )^{\dagger}
             + \frac{ \be_i^{\top} {\bM^{\dagger}}^2 \be_i }{ \big( \be_i^{\top} \bM^{\dagger} \be_i \big)^2 }
             \cdot \bX^{\dagger} \be_i \be_i^{\top} (\bX^{\dagger})^{\top}\\
         &= \bX^{\dagger} \left\{ \bI_n -  \frac{ \be_i \be^{\top}_i}{\be^{\top}_i \bM^{\dagger} \be_i } \bM^{\dagger} - \bM^{\dagger}  \frac{ \be_i \be^{\top}_i}{\be^{\top}_i \bM^{\dagger} \be_i } + \frac{ \be_i^{\top} {\bM^{\dagger}}^2 \be_i }{ \big( \be_i^{\top} \bM^{\dagger} \be_i \big)^2 } \be_i \be_i^{\top} \right\} (\bX^{\dagger})^{\top}.
         \label{eq:loo.pseudo}
%        \\
%    &= (\bX^{\top} \bX)^\dagger 
% - \alpha^{-1} \bW (\bX^{\top} \bX)^\dagger 
% - \alpha^{-1} (\bX^{\top} \bX)^\dagger \bW 
% + \rho \alpha^{-2} \bW. 
\end{align}

Combining \eqref{eqn:loo.products} and \eqref{eq:loo.pseudo}, we have
\begin{align}
    \hbbeta^{(\si)} 
    %\\
         &= \big( \bX_{\si}^{\top} \bX_{\si} \big)^{\dagger} \bX_{\si}^{\top} \by_{\si}\\
         &= \bX^{\dagger} \left\{ \bI_n -  \frac{ \be_i \be^{\top}_i}{\be^{\top}_i \bM^{\dagger} \be_i } \bM^{\dagger} - \bM^{\dagger}  \frac{ \be_i \be^{\top}_i}{\be^{\top}_i \bM^{\dagger} \be_i } + \frac{ \be_i^{\top} {\bM^{\dagger}}^2 \be_i }{ \big( \be_i^{\top} \bM^{\dagger} \be_i \big)^2 } \be_i \be_i^{\top} \right\} \underbrace{ (\bX^{\dagger})^{\top} \bX^{\top} }_{= \bI_n } \big( \bI_n - \be_i \be_i^{\top} \big) \by\\
         &= \bX^{\dagger} \left\{ \bI_n - \frac{ \be_i \be^{\top}_i}{\be^{\top}_i \bM^{\dagger} \be_i } \bM^{\dagger} \right\} \by\\
         &= \left\{ \bI_n - \frac{ \big( \bX^{\dagger} \be_i \big) \cdot \big( \bX^{\dagger} \be_i \big)^{\top} }{ \big\| \bX^{\dagger} \be_i \big\|_2^2 }  \right\} \bX^{\dagger} \by.       \qquad\qquad\because \bM^{\dagger} = (\bX^{\dagger})^{\top} \bX^{\dagger}
\end{align}
Observing $\hbbeta = \bX^{\dagger} \by$ completes the proof.
 \end{proof} 

% COROLLARY 3 
\subsection{Proof of Corollary~\ref{cor:loo.shortcut}} \label{sec:proof_loo_residual}

\begin{proof} %[Proof of Corollary \ref{cor:loo.shortcut}] 
Recall from Assumption~\ref{assump:row_rank} that we have $\by = \bX \hbbeta$. 
Coupled with Corollary~\ref{cor:loo}, we rewrite the leave-$i$-out prediction residual as
\begin{align}
	\tvarepsilon_i &= \bx_i^\top \cdot \left( \hbbeta - \hbbeta^{(\si)} \right)
	\\
	&= \bx_i^\top \cdot \left\{ \frac{(\bX^\top \bX)^\dagger \bx_i \bx_i^\top (\bX^\top \bX)^\dagger}{\| (\bX^\top \bX)^\dagger \bx_i \|_2^2} \right\} \cdot \hbbeta
	\\
	&= \frac{\bx_i^\top (\bX^\top \bX)^\dagger}{\| (\bX^\top \bX)^\dagger \bx_i \|_2^2} \cdot  \hbbeta, \label{eq:loo.proof.1}
\end{align} 
where the final equality follows since $\bx_i^\top (\bX^\top \bX)^\dagger \bx_i = 1$, cf. Remark \ref{rem:hat_matrix}. 
Noting that $\bx_i = \bX^\top \be_i$ and recalling $\hbbeta = (\bX^\top \bX)^\dagger \bX^\top \by$, we further simplify \eqref{eq:loo.proof.1} as 
\begin{align}
	\tvarepsilon_i &= \frac{\be_i^\top (\bX \bX^\top)^{-1} \by}{\be_i^\top (\bX \bX^\top)^{-1} \be_i}. 
\end{align} 
To complete the proof, let $\bD = \diag(\bG_{\bX}) $ and observe that for all $i \in [n]$,
\begin{align}
    \be_i^\top \left( \bD^{-1} \cdot \bG_{\bX} \by \right)
        &= \frac{1}{ \be_i^\top \bD \be_i} \be_i^\top \cdot  (\bX \bX^\top)^{-1} \by
            &&\because \bD^{-1} \be_i = \frac{1}{\be_i^\top \bD \be_i} \be_i\\
        &= \frac{\be_i^\top \big( \bX \bX^\top \big)^{-1} \by }{ \be_i^\top \big( \bX \bX^\top \big)^{-1} \be_i }\\
        &= \tvarepsilon_i.
\end{align}
Therefore, $\tbvarepsilon =  \big[ \diag(\Gram_{\bX}) \big]^{-1} \cdot \Gram_{\bX}  \by$. 
\end{proof}

%% COROLLARY 6
%\subsection{Proof of Corollary~\ref{cor:loo.jackknife.2}} \label{sec:proof_jackknife.2}
%
%\begin{proof} [Proof of Corollary~\ref{cor:loo.jackknife.2}]
%%
%From \eqref{eq:jack.1} in Appendix~\ref{sec:proof_jackknife.1} for the proof of Corollary~\ref{cor:loo.jackknife.2}, it follows that  
%%
%\begin{align}
%	\hbbeta - \tbbeta^{(i)} 
%        &= (n-1) \cdot \tvarepsilon_i \cdot \bX^{\dagger} \be_i.
%\end{align} 
%%
%Accordingly, we have that 
%\begin{align}
%	\bhV^{\text{Jack}} 
%	    &= \bX^{\dagger} \cdot \left( \sum_{i=1}^n \tvarepsilon_i^2 \cdot \be_i \be_i^\top \right)  \cdot \big( \bX^\dagger \big)^\top. \label{eq:jack.var.1} 
%\end{align} 
%%
%Observe that the expression in \eqref{eq:jack.1} holds in both data regimes with the LOO residuals appropriately defined.
%%
%As such, \eqref{eq:jack.var.1} holds in both data regimes as well, which completes the proof.
%%
%\end{proof} 

\section{Deferred proofs from Section~\ref{sec:subsampled_column}} \label{sec:proofs.columns}

% THEOREM 
\subsection{Proof of Theorem~\ref{thm:partially_regularized_OLS}} \label{sec:proof.partial.reg}
We prove Theorem~\ref{thm:partially_regularized_OLS} in two steps. 
First, we prove the formula \eqref{eq:fwl.j.hd.0}, and thereafter, we prove the additional outcomes in \eqref{eq:fwl.jc.hd} and Remark~\ref{remark:fwl.extension}.

\textit{I. Proof of the high-dimensional FWL theorem}
\begin{proof}[Proof of the formula \eqref{eq:fwl.j.hd.0}]
Recall that Assumption~\ref{assump:row_rank} implies $\rank(\bX) = n$.
As such, $\hbbeta$ satisfies the interpolating property: 
\begin{align}
	 \by = \bW \hbbeta^{[\cJ]}_{\cJ} + \bT \hbbeta^{[\cJ]}_{\cJ^c}. \label{eq:fwl.interpolate} 
\end{align} 
Next, we plug the decomposition $\bW = \bP_{\bT} \bW + \bP_{\bT}^{\perp} \bW$ into \eqref{eq:fwl.interpolate} to obtain 
\begin{align}
            \by &= \bP_{\bT}^{\perp} \bW \hbbeta^{[\cJ]}_{\cJ} + \bP_{\bT} \bW \hbbeta^{[\cJ]}_{\cJ} + \bT \hbbeta^{[\cJ]}_{\cJ^c}.  \label{eqn:interpol}
\end{align}
Multiplying $\bP_{\bT}^\perp$ to both sides of \eqref{eqn:interpol} from the left yields 
\begin{align}
    \bP_{\bT}^{\perp} \by 
        = \bP_{\bT}^{\perp} \bW \hbbeta^{[\cJ]}_{\cJ}    \label{eqn:interpol.2}
\end{align}
because 
(i) $(\bP_{\bT}^\perp)^2 = \bP_{\bT}^\perp$,
(ii) $\bP_{\bT}^{\perp}\bP_{\bT} = \bzero$,
and (iii) $\bP_{\bT}^\perp \bT = \bzero$. 
Consequently, it follows from the definition of the $\cJ$-partially regularized OLS estimator in \eqref{eqn:j_partial} that $\hbbeta^{[\cJ]}_{\cJ} $ is the minimum $\ell_2$-norm solution among the interpolators, i.e., $\hbbeta^{[\cJ]}_{\cJ} 
        = \olsmin ( \bP_{\bT}^{\perp} \bW, \bP_{\bT}^{\perp} \by ) = ( \bP_{\bT}^{\perp} \bW )^{\dagger} \bP_{\bT}^{\perp} \by$. 
%\[
%    \hbbeta^{[\cJ]}_{\cJ} 
%        = \olsmin ( \bP_{\bT}^{\perp} \bW, \bP_{\bT}^{\perp} \by ) = ( \bP_{\bT}^{\perp} \bW )^{\dagger} \bP_{\bT}^{\perp} \by.
%\]
%
\end{proof}

\textit{II. Proof of supplementary results in Theorem~\ref{thm:partially_regularized_OLS}}
We establish the additional results presented in \eqref{eq:fwl.jc.hd} and Remark~\ref{remark:fwl.extension} to complete the proof of Theorem \ref{thm:partially_regularized_OLS}. 
This part of the proof hinges on the utilization of block matrix inversion through the application of the Schur complement \cite[Chapter A.5.5]{boyd2004convex}, which is stated as a lemma below.
\begin{lemma}%[{\cite[Chapter A.5.5]{boyd2004convex}}]
\label{lem:schur}
    Let $\bM = \begin{bmatrix} \bA & \bB \\ \bB^{\top} & \bC \end{bmatrix}$ be a symmetric block partitioned matrix such that $\det \bA \neq 0$. 
    Let $\bS = \bC - \bB^{\top} \bA^{-1} \bB$ is the Schur complement of $\bA$ in $\bM$. 
    Then $\det \bM = \det \bA \cdot \det \bS$, and moreover, if $\det \bS \neq 0$, then
    \[
        \bM^{-1} 
            = \begin{bmatrix} \bA & \bB \\ \bB^{\top} & \bC \end{bmatrix}^{-1}
            = \begin{bmatrix} \bA^{-1} + \bA^{-1} \bB \bS^{-1} \bB^{\top} \bA^{-1} & - \bA^{-1} \bB \bS^{-1} \\
            -\bS^{-1} \bB^{\top} \bA^{-1} & \bS^{-1}
            \end{bmatrix}.
    \]
\end{lemma}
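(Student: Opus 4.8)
The plan is to deduce all of the assertions from the block $LDU$ factorization of $\bM$ produced by block Gaussian elimination, which is available because the hypothesis $\det\bA\neq 0$ makes $\bA$ invertible. First I would record and check (by a one-line block multiplication) the factorization
\[
    \bM
        = \begin{bmatrix} \bI & \bzero \\ \bB^{\top}\bA^{-1} & \bI \end{bmatrix}
          \begin{bmatrix} \bA & \bzero \\ \bzero & \bS \end{bmatrix}
          \begin{bmatrix} \bI & \bA^{-1}\bB \\ \bzero & \bI \end{bmatrix},
    \qquad \bS \coloneqq \bC - \bB^{\top}\bA^{-1}\bB ;
\]
the symmetry of $\bM$ is not used here and only serves to make $\bS$ symmetric. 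Taking determinants is then immediate: the two outer factors are block-unitriangular and hence have determinant $1$, so multiplicativity of the determinant together with the block-diagonal determinant of the middle factor gives $\det\bM = \det\bA\cdot\det\bS$.

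Next, under the additional hypothesis $\det\bS\neq 0$, the middle factor is invertible, so $\bM$ is invertible and $\bM^{-1}$ equals the product, in reverse order, of the inverses of the three factors. Using that the inverse of a block-unitriangular matrix merely negates its off-diagonal block and that $\begin{bmatrix}\bA & \bzero\\ \bzero & \bS\end{bmatrix}^{-1} = \begin{bmatrix}\bA^{-1} & \bzero\\ \bzero & \bS^{-1}\end{bmatrix}$, I would expand
\[
    \bM^{-1}
        = \begin{bmatrix} \bI & -\bA^{-1}\bB \\ \bzero & \bI \end{bmatrix}
          \begin{bmatrix} \bA^{-1} & \bzero \\ \bzero & \bS^{-1} \end{bmatrix}
          \begin{bmatrix} \bI & \bzero \\ -\bB^{\top}\bA^{-1} & \bI \end{bmatrix}
\]
block by block, obtaining exactly the claimed matrix: $(1,1)$ block $\bA^{-1} + \bA^{-1}\bB\bS^{-1}\bB^{\top}\bA^{-1}$, $(1,2)$ block $-\bA^{-1}\bB\bS^{-1}$, $(2,1)$ block $-\bS^{-1}\bB^{\top}\bA^{-1}$, and $(2,2)$ block $\bS^{-1}$.

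Since this is textbook block linear algebra, there is no genuine obstacle; the only thing needing care is the bookkeeping in the final triple-product expansion. An equally valid route that avoids even that is to simply posit the displayed formula for $\bM^{-1}$ and verify $\bM\bM^{-1} = \bI$ and $\bM^{-1}\bM = \bI$ by direct block multiplication, cancelling $\bA\bA^{-1}$ and $\bS\bS^{-1}$ at the end.
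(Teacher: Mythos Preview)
Your proof is correct and follows the standard textbook route via the block $LDU$ factorization; the determinant identity and the explicit inverse both fall out cleanly. The paper does not give its own proof of this lemma---it is stated as a citation to \cite[Chapter~A.5.5]{boyd2004convex} and used as a black box in the proof of Theorem~\ref{thm:partially_regularized_OLS}---so there is nothing to compare against beyond noting that your argument is exactly the classical one found in that reference.
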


\begin{proof}[Completing the proof of Theorem \ref{thm:partially_regularized_OLS}]
    To ensure clarity and accessibility, we present this proof in three steps, which are outlined below. 
    In Step 1, we establish the necessary and sufficient conditions for the optimality of $\hbbeta^{[\cJ]}$ in the partially regularized OLS problem \eqref{eqn:j_partial}. 
    These conditions are expressed as a system of linear equations involving both the primal and dual variables, known as the Karush-Kuhn-Tucker (KKT) system; see \eqref{eqn:KKT_system.2} below. 
    In Step 2, we employ the Schur complement to compute the inverse of the KKT matrix, yielding the expression presented in \eqref{eqn:inverse_KKT_matrix}. 
    In Step 3, we utilize the inverse KKT matrix obtained in Step 2 to solve the KKT system from Step 1, thereby concluding the proof. 
    This step-by-step approach aims to enhance the understanding and accessibility of the proof.

    \textit{Step 1. Expressing the optimality condtions as a KKT system.}
    Let $q = |\cJ|$. Without loss of generality, we may assume $\cJ = [q] \subseteq [p]$ by an appropriate permutation of coordinates, if necessary. 
    Then we observe that Assumption \ref{assump:partial} implies Assumption \ref{assump:row_rank}, and thus, $\hbbeta^{[\cJ]}$ is the solution of the following optimization problem:
    \begin{equation}\label{eqn:prob_partial}
        \text{minimize}\quad \frac{1}{2} \bbeta^{\top} \bQ \bbeta
        \qquad
        \text{subject to}\quad \bX \bbeta = \by,
    \end{equation}
    where $\bQ = \begin{bmatrix} \bI_q & \bzero \\ \bzero & \bzero \end{bmatrix}$. 
    The Lagrangian of the problem \eqref{eqn:prob_partial} is given as
    \[
        \cL( \bbeta, \bnu ) = \frac{1}{2} \bbeta^{\top} \bQ \bbeta + \bnu^{\top}  \big( \bX \bbeta - \by ).
    \]
    It is well known that $\bbeta = \bbetaopt \in \RR^p$ is optimal for the problem \eqref{eqn:prob_partial} if and only if there exists a dual certificate $\bnuopt \in \RR^n$ such that 
    \begin{align*}
        \nabla_{\bbeta} \cL(\bbetaopt, \bnuopt) &= \bQ \bbetaopt + \bX^{\top} \bnuopt = \bzero\\
        \nabla_{\bnu} \cL(\bbetaopt, \bnuopt) &= \bX \bbetaopt - \by = \bzero,
    \end{align*}
    which are called the KKT conditions in the optimization literature.
    These optimality conditions can be written as a system of $p+n$ linear equations in the $p+n$ variables $\bbetaopt$, $\bnuopt$:
    \begin{equation}\label{eqn:KKT_system}
        \begin{bmatrix} \bQ & \bX^{\top} \\ \bX & \bzero \end{bmatrix}
        \begin{bmatrix} \bbetaopt \\ \bnuopt \end{bmatrix}
        =
        \begin{bmatrix} \bzero \\ \by \end{bmatrix}.
    \end{equation}

    Recall we introduced $\bW = \bX_{*,\cJ}$ and $\bT = \bX_{*,\cJ^c}$ for a shorthand notation. 
    We rewrite the KKT system \eqref{eqn:KKT_system} as
    \begin{equation}\label{eqn:KKT_system.2}
        \underbrace{\begin{bmatrix} \bI_q & \bzero & \bW^{\top} \\ \bzero & \bzero & \bT^{\top} \\ \bW & \bT & \bzero \end{bmatrix}}_{=: \bM}
        \begin{bmatrix} \bbetaopt_{\cJ} \\ \bbetaopt_{\cJ^c} \\ \bnuopt \end{bmatrix}
        =
        \begin{bmatrix} \bzero \\ \bzero \\\by \end{bmatrix}.
    \end{equation}

    \textit{Step 2. Computing the inverse of the KKT matrix $\bM$.} 
    Observe that $\bA := \bI_q$ is invertible and so is the Schur complement of the block $\bA$ of the matrix $\bM$, i.e., $\bM / \bA := \begin{bmatrix} \bzero & \bT^{\top} \\ \bT & \bzero \end{bmatrix} - \begin{bmatrix} \bzero \\ \bW\end{bmatrix} \bI_q^{-1} \begin{bmatrix} \bzero & \bW^{\top} \end{bmatrix} = \begin{bmatrix} \bzero & \bT^{\top} \\ \bT & - \bW \bW^{\top} \end{bmatrix}$. 
    Here we prove the invertibility of $\bM/\bA$.
    \begin{quote}
        \emph{Proof of the invertibility of $\bM/\bA$.}
        Assume that $\bM/\bA$ is not invertible. 
        Then $\cN(\bM/\bA) \neq \{\bzero\}$, and there exists a nonzero vector $\bv = \begin{bmatrix} \bv_1 \\ \bv_2 \end{bmatrix}$ such that $\bv_1 \in \RR^{p-q}$, $\bv_2 \in \RR^n$, and $(\bM/\bA) \bv = \bzero$. 
        That is,
        \[
            \begin{bmatrix} \bzero & \bT^{\top} \\ \bT & - \bW \bW^{\top} \end{bmatrix} \begin{bmatrix} \bv_1 \\ \bv_2 \end{bmatrix}
                = \begin{bmatrix} \bT^{\top} \bv_2 \\ \bT \bv_1 - \bW \bW^{\top} \bv_2 \end{bmatrix} 
                = \bzero.
        \]
        It follows that 
        \[
            \bv_2^{\top} \left( \bT \bv_1 - \bW \bW^{\top} \bv_2 \right)
                = \big( \bT^{\top} \bv_2 \big)^{\top} \bv_1 - \bv_2^{\top} \bW \bW^{\top} \bv_2 
                = - \big\| \bW^{\top} \bv_2 \big\|_2^2
                = 0.
        \]
        This implies that $\bW^{\top} \bv_2 = \bzero$. 
        Since $\rank(\bW) = n$ due to Assumption \ref{assump:partial}, we must have $\bv_2 = \bzero$, and therefore, $\bv_1 \neq \bzero$. 
        However, this yields $\bT \bv_1 - \bW \bW^{\top} \bv_2 = \bT \bv_1 = \bzero$, which contradicts the assumption that $\dim \rowsp(\bT) = \rank(\bT) = p-q$. 
        Consequently, $\cN(\bM/\bA) = \{\bzero\}$, and $\bM/\bA$ is invertible.
    \end{quote}
    By Lemma \ref{lem:schur}, since $\bA$ and $\bM/\bA$ are both invertible, $\bM$ is invertible, and moreover, its inverse can be computed using the Schur complement as
    \begin{align}
        \bM^{-1} 
            &= \begin{bmatrix} 
                \bA^{-1} + \bA^{-1} \bB \big( \bM/\bA\big)^{-1} \bB^{\top} \bA^{-1} &   - \bA^{-1} \bB \big( \bM/\bA \big)^{-1} \\
                - \big( \bM/\bA \big)^{-1} \bB^{\top} \bA^{-1}  & \big( \bM/\bA \big)^{-1}
                \end{bmatrix}
                \quad\text{where}\quad
                \bB =
                \begin{bmatrix} 0 & \bW^{\top}  \end{bmatrix}
                    \nonumber\\
            &= \begin{bmatrix}
                    \bI_q + \bB \big( \bM/\bA\big)^{-1} \bB^{\top} & - \bB \big( \bM/\bA \big)^{-1}\\
                    - \big( \bM/\bA \big)^{-1} \bB^{\top}   & \big( \bM/\bA \big)^{-1}
                \end{bmatrix}.
                    \label{eqn:KKT_inverse}
    \end{align}
    In order to compute the inverse of the Schur complement, $\big( \bM/\bA \big)^{-1}$, we let 
    \[
        \tbM := \bM/\bA = \begin{bmatrix} \bzero & \bT^{\top} \\ \bT & -\bW\bW^{\top} \end{bmatrix}
    \]
    and observe that $\bW \bW^{\top}$ is invertible due to the assumption that $\rank(\bW) = n$. 
    The Schur complement of $-\bW\bW^{\top}$ in $\tbM$, which we denote by $\tbS$, is given as 
    \[
        \tbS = \bzero - \bT^{\top} \big( - \bW\bW^{\top} \big)^{-1} \bT = \bT^{\top} \bW^{\dagger, \top} \bW^{\dagger} \bT = \big( \bW^{\dagger} \bT \big)^{\top} \big( \bW^{\dagger} \bT \big).
    \]
    By Lemma \ref{lem:schur}, we obtain
    \begin{align*}
        \tbM^{-1}
            = \begin{bmatrix}
                \tbS^{-1}   
                    &   -\tbS^{-1} \bT^{\top} \big(-\bW\bW^{\top} \big)^{-1}\\
                 - \big(-\bW\bW^{\top} \big)^{-1} \bT \tbS^{-1} 
                    & \big(-\bW\bW^{\top} \big)^{-1} + \big(-\bW\bW^{\top} \big)^{-1} \bT \tbS^{-1} \bT^{\top} \big(-\bW\bW^{\top} \big)^{-1}
            \end{bmatrix}.
    \end{align*}
    Then we observe that
    \begin{align*}
        -\tbS^{-1} \bT^{\top} \big(-\bW\bW^{\top} \big)^{-1}
            &= \tbS^{-1} \bT^{\top} \bW^{\dagger, \top} \bW^{\dagger}\\
            &= \big( \bW^{\dagger} \bT \big)^{\dagger} \big( \bW^{\dagger} \bT \big)^{\top, \dagger} \big( \bW^{\dagger} \bT \big)^{\top} \bW^{\dagger}\\
            &= \big( \bW^{\dagger} \bT \big)^{\dagger} \bW^{\dagger},
                \qquad \because \big( \bW^{\dagger} \bT \big)^{\top, \dagger} \big( \bW^{\dagger} \bT \big)^{\top} = \bP_{\bW^{\dagger}\bT} \\
        - \big(-\bW\bW^{\top} \big)^{-1} \bT \tbS^{-1}
            &= \bW^{\top, \dagger} \big( \bW^{\dagger} \bT \big) \big( \bW^{\dagger} \bT \big)^{\dagger} \big( \bW^{\dagger} \bT \big)^{\top, \dagger}\\
            &= \bW^{\top, \dagger} \big( \bW^{\dagger} \bT \big)^{\top, \dagger}.
    \end{align*}
    Likewise,
    \begin{align*}
        &\big(-\bW\bW^{\top} \big)^{-1} + \big(-\bW\bW^{\top} \big)^{-1} \bT \tbS^{-1} \bT^{\top} \big(-\bW\bW^{\top} \big)^{-1}
            \\ &= - \bW^{\top,\dagger}\bW^{\dagger} + \bW^{\top,\dagger}\bW^{\dagger} \bT \cdot \big( \bW^{\dagger} \bT \big)^{\dagger} \bW^{\dagger}\\
            &= - \bW^{\top,\dagger} \left( \bI_q - \bP_{\bW^{\dagger} \bT} \right) \bW^{\dagger} 
    \end{align*}
    As a result, we have
    \begin{equation}\label{eqn:inverse_schur_complement}
        \tbM^{-1}
            = \begin{bmatrix}
                 \big( \bW^{\dagger} \bT \big)^{\dagger}  \big( \bW^{\dagger} \bT \big)^{\top, \dagger}
                    & \big( \bW^{\dagger} \bT \big)^{\dagger} \bW^{\dagger}\\
                \bW^{\top, \dagger} \big( \bW^{\dagger} \bT \big)^{\top, \dagger}
                    & - \bW^{\top,\dagger} \left( \bI_q - \bP_{\bW^{\dagger} \bT} \right) \bW^{\dagger} 
            \end{bmatrix}.
    \end{equation}

    Finally, we insert the expression \eqref{eqn:inverse_schur_complement} for $\tbM^{-1} = \big( \bM / \bA \big)^{-1}$ into \eqref{eqn:KKT_inverse}. 
    Observe that 
    \begin{align*}
        - \bB \big( \bM/\bA\big)^{-1}
            &= - \begin{bmatrix} \bzero & \bW^{\top}  \end{bmatrix}
                \begin{bmatrix}
                 \big( \bW^{\dagger} \bT \big)^{\dagger}  \big( \bW^{\dagger} \bT \big)^{\top, \dagger}
                    & \big( \bW^{\dagger} \bT \big)^{\dagger} \bW^{\dagger}\\
                \bW^{\top, \dagger} \big( \bW^{\dagger} \bT \big)^{\top, \dagger}
                    & - \bW^{\top,\dagger} \left( \bI_q - \bP_{\bW^{\dagger} \bT} \right) \bW^{\dagger} 
            \end{bmatrix}\\
            &= \begin{bmatrix} 
                - \bW^{\top} \bW^{\top, \dagger} \big( \bW^{\dagger} \bT \big)^{\top, \dagger} 
                & \bW^{\top} \bW^{\top,\dagger} \left( \bI_q - \bP_{\bW^{\dagger} \bT} \right) \bW^{\dagger} 
                \end{bmatrix}\\
            &= \bP_{\bW^{\top}} \begin{bmatrix}  - \big( \bW^{\dagger} \bT \big)^{\top, \dagger}  &  \bP_{\bW^{\dagger} \bT}^{\perp} \bW^{\dagger}   \end{bmatrix},\\
        \bB \big( \bM/\bA\big)^{-1} \bB^{\top}
            &= \bP_{\bW^{\top}} \begin{bmatrix}  \big( \bW^{\dagger} \bT \big)^{\top, \dagger}  &  - \bP_{\bW^{\dagger} \bT}^{\perp} \bW^{\dagger}   \end{bmatrix}
                \begin{bmatrix} \bzero \\ \bW  \end{bmatrix}\\
            &= - \bP_{\bW^{\top}} \bP_{\bW^{\dagger} \bT}^{\perp} \bP_{\bW^{\top}}.
    \end{align*}
    Therefore, we obtain
    \begin{equation}\label{eqn:inverse_KKT_matrix}
        \bM^{-1}
            = \begin{bmatrix}
                \bI_q - \bP_{\bW^{\top}} \bP_{\bW^{\dagger} \bT}^{\perp} \bP_{\bW^{\top}}   &   - \bP_{\bW^{\top}} \big( \bW^{\dagger} \bT \big)^{\top, \dagger}  & \bP_{\bW^{\top}} \bP_{\bW^{\dagger} \bT}^{\perp} \bW^{\dagger} \\
                - \big( \bW^{\dagger} \bT \big)^{\dagger} \bP_{\bW^{\top}}    &   \big( \bW^{\dagger} \bT \big)^{\dagger}  \big( \bW^{\dagger} \bT \big)^{\top, \dagger}
                    & \big( \bW^{\dagger} \bT \big)^{\dagger} \bW^{\dagger}\\
                 \bW^{\dagger, \top} \bP_{\bW^{\top}} \bP_{\bW^{\dagger} \bT}^{\perp} &   \bW^{\top, \dagger} \big( \bW^{\dagger} \bT \big)^{\top, \dagger}
                    & - \bW^{\top,\dagger} \bP_{\bW^{\dagger} \bT}^{\perp} \bW^{\dagger} 
            \end{bmatrix}.
    \end{equation}

    \textit{Step 3. Concluding the proof.} 
    Lastly, we solve the KKT system \eqref{eqn:KKT_system.2} in Step 1, using the expression \eqref{eqn:inverse_KKT_matrix} for the inverse of the KKT matrix obtained in Step 2. 
    Specifically, solving this system yields 
    \begin{align*}
        \bbetaopt_{\cJ}
            &= \bP_{\bW^{\top}} \bP_{\bW^{\dagger} \bT}^{\perp} \bW^{\dagger} \by,\\ 
        \bbetaopt_{\cJ^c}
            &= \big( \bW^{\dagger} \bT \big)^{\dagger} \bW^{\dagger} \by.
    \end{align*}
    This completes the proof. 
\end{proof}

% THEOREM
\subsection{Proof of Theorem~\ref{thm:sub_column_OLS}} \label{sec:proof_cochran}

\begin{proof} %[Proof of Theorem~\ref{thm:sub_column_OLS}]
    Note that $\rank(\bX_{\star, \cJ}) = n$ by Assumption \ref{assump:partial}. 
    As a result, 
    \begin{align}
    	\min_{\balpha \in \Rb^{|\cJ|}} \| \by - \bX_{\star, \cJ} \balpha \|_2^2 &= 0,
	%\quad 
	\\ \min_{\bDelta \in \Rb^{|\cJ| \times |\cJ^c|}} \| \bX_{\star, \cJ^c} - \bX_{\star, \cJ} \bDelta \|_F^2 &= 0. 
    \end{align} 
    Therefore, 
    \begin{align}
        \by &= \bX_{\star, \cJ} \hbalpha, \qquad \forall \hbalpha \in \cS_2,        \label{eqn:cochran.S2}\\
        \bX_{\star, \cJ^c} &= \bX_{\star, \cJ} \hbDelta, \qquad \forall \hbDelta \in \cS_3.      \label{eqn:cochran.S3}
    \end{align}
    Likewise, $\rank(\bX) = n$ as  $n = \rank(\bX_{\star, \cJ}) \leq \rank(\bX) \leq \min\{n, p\}$. 
    Thus, 
    \begin{equation}
        \by = \bX \hbbeta
            = \bX_{\star, \cJ} \hbbeta_{\cJ} + \bX_{\star, \cJ^c} \hbbeta_{\cJ^c},
                \qquad \forall \hbbeta \in \cS_1.       \label{eqn:cochran.S1}
    \end{equation}
    Combining \eqref{eqn:cochran.S3} and \eqref{eqn:cochran.S1} yields
    \[
        \by = \bX_{\star, \cJ} \left( \hbbeta_{\cJ} + \hbDelta \hbbeta_{\cJ^c} \right).
    \]
    This equation in combination with \eqref{eqn:cochran.S2} implies the first conclusion in \eqref{eq:cochran.hd}:
    \[
        \bX_{\star, \cJ} \hbalpha = \bX_{\star, \cJ} \left( \hbbeta_{\cJ} + \hbDelta \hbbeta_{\cJ^c} \right),
            \qquad \forall \big( \hbbeta, \hbalpha, \hbDelta \big) \in \cS_1 \times \cS_2 \times \cS_3.
    \]
    Suppose $\hbbeta \in \cS_1$, $\hbalpha \in \cS_2$, $\hbDelta \in \cS_3$ are each the unique minimum $\ell_2$-norm solutions. Then
    \[
        \hbbeta = \bX^{\dagger} \by,
        \qquad
        \hbalpha = \bX_{\star, \cJ}^{\dagger} \by,
        \qquad
        \hbDelta = \bX_{\star, \cJ}^{\dagger} \bX_{\star, \cJ^c}.
    \]
    Since $\hbbeta = \bX^{\dagger} \by$, we have $\hbbeta \in \rowsp(\bX)$ and thus, $\hbbeta_{\cJ} \in \rowsp(\bX_{\star,\cJ})$. 
    As such, we obtain
    \begin{align*}
        \hbbeta_{\cJ} + \hbDelta \hbbeta_{\cJ^c}
            &= \bX_{\star,\cJ}^{\dagger} \bX_{\star, \cJ} \hbbeta_{\cJ} + \bX_{\star, \cJ}^{\dagger} \bX_{\star, \cJ^c} \hbbeta_{\cJ^c}
                &&\because \hbbeta_{\cJ} = \bPi_{\bX_{\star,\cJ}^{\top}} \hbbeta_{\cJ} = \bX_{\star,\cJ}^{\dagger} \bX_{\star, \cJ} \hbbeta_{\cJ}\\
            &= \begin{bmatrix} \bX_{\star,\cJ}^{\dagger} \bX_{\star, \cJ} & \bzero \end{bmatrix} \hbbeta 
                + \begin{bmatrix} \bzero & \bX_{\star, \cJ}^{\dagger} \bX_{\star, \cJ^c} \end{bmatrix} \hbbeta\\
            &= \bX_{\star, \cJ}^{\dagger} \bX \bX^{\dagger} \by
                &&\because \hbbeta = \bX^{\dagger} \by\\
            &= \bX_{\star, \cJ}^{\dagger} \by
                &&\because \bX \bX^{\dagger} = \bI_n \text{ by Assumption \ref{assump:partial}}\\
            &= \hbalpha.
    \end{align*}
    This concludes the proof. 
\end{proof}

\subsection{Proof of Corollary~\ref{cor:loco}} \label{sec:additional_remarks}

% \subsection{Expansion of Remark~\ref{remark:loco} on Theorem \ref{thm:sub_column_OLS}} \label{appendix:guido}
%
%We expand on Remark~\ref{remark:loco} and show Theorem~\ref{thm:sub_column_OLS} can be used to prove \citet[Proposition 1]{spiess2023double}. 

\begin{proof}
%
%Below, we establish that our Theorem~\ref{thm:sub_column_OLS} can be used to prove \cite[Proposition 1]{spiess2023double}. 
%%
%To enable progress in this direction, we state
We first state \eqref{eq:cochran.hd.2} of Theorem~\ref{thm:sub_column_OLS} for the specialized leave-one-covariate-out configuration. 
For any given $j \in [p]$, we let $\cJ = \{j\}^c = [p] \setminus \{j\}$ and $\cJ^c = \{j \}$ and obtain:  
\begin{align}
	\hbalpha^{(\sim j)} &= \hbbeta_{\{j\}^c} + \hbDelta^{(\sim j)} \cdot \hbeta_{j} \in \Rb^{p-1}, \label{eq:ovb.special}
\end{align} 
where $\hbbeta \in \cS_1$, $\hbalpha^{(\sim j)} \in \cS_2$, and $\hbDelta^{(\sim j)} \in \cS_3$ are each the minimum $\ell_2$-norm OLS solutions as defined in \eqref{eq:column.s1}. 
Recall $\hbbeta^{(\sim j)} \in \Rb^p$ is the zero-padded version of $\hbalpha^{(\sim j)}$, as defined in \eqref{eq:ovb.special.append}. 
%
%Hence, it suffices to verify the equality coordinate-wisely for each $\ell \in [p]$.

%Additionally, we define $\hbbeta^{(\sim j)} \in \Rb^p$ as the zero-padded version of $\hbalpha^{(\sim j)}$, i.e., for each element $\ell \in [p]$, 
%%
%\begin{equation} \label{eq:ovb.special.append}
%	\hbeta^{(\sim j)}_\ell 
%        = \begin{cases}
%		  \halpha^{(\sim j)}_\ell, & \text{if } \ell < j,\\
%            0, & \text{if } \ell = j,\\
%    		\halpha^{(\sim j)}_{\ell-1}, & \text{if } \ell > j. 
%	\end{cases} 
%\end{equation}
%%
%Recall that \citet[Proposition 1]{spiess2023double} states
%\begin{align}
%	\hbbeta = \sum_{j = 1}^p \lambda_j \cdot \hbbeta^{(\sim j)}, \label{eq:ovb.goal}
%\end{align}
%%
%where $\lambda_j = (p-n)^{-1} (1-h_{jj})$ with $h_{j \ell} = \big[\bX^\top (\bX \bX^\top)^{-1} \bX \big]_{j \ell}$ for any $j, \ell \in [p]$. 
%It is easy to check that $\lambda_j \ge 0$ and $\sum_{j=1}^p \lambda_j = 1$, as noted in \cite{spiess2023double}. 
%To establish \eqref{eq:ovb.goal}, it suffices to verify the equality coordinate-wisely for each $\ell \in [p]$.

%
We anchor on \eqref{eq:ovb.special} and \eqref{eq:ovb.special.append} to express the $\ell$-th coordinate as %of the right-hand side (RHS) of \eqref{eq:ovb.goal} as 
\begin{align}
	\left(\sum_{j = 1}^p \lambda_j \cdot \hbeta^{(\sim j)}\right)_\ell
        &= \sum_{j < \ell} \lambda_j \cdot \halpha^{(\sim j)}_{\ell-1}
            + \sum_{j > \ell} \lambda_j \cdot \halpha^{(\sim j)}_{\ell}
            &&\because \eqref{eq:ovb.special.append}
    	\\
        &=
    	\sum_{j < \ell} \lambda_j \cdot \left( \hbbeta_{\{j\}^c} + \hbDelta^{(\sim j)} \cdot \hbeta_{j} \right)_{\ell-1}
            + \sum_{j > \ell} \lambda_j \cdot \left( \hbbeta_{\{j\}^c} + \hbDelta^{(\sim j)} \cdot \hbeta_{j} \right)_{\ell}
            &&\because \eqref{eq:ovb.special}
    	\\
        &=
    	\sum_{j < \ell} \lambda_j \cdot \left( \hbeta_{\ell} + \hDelta^{(\sim j)}_{\ell-1} \cdot \hbeta_{j} \right)
            + \sum_{j > \ell} \lambda_j \cdot \left( \hbeta_{\ell} + \hDelta^{(\sim j)}_{\ell} \cdot \hbeta_{j} \right)
           % &&\because \hbbeta_{\{j\}^c} \text{ is a subvector of }\hbbeta
    	\\
    	&= (1 - \lambda_{\ell}) \hbeta_\ell  
            + \underbrace{
            \left( \sum_{j < \ell} \lambda_j  \hDelta^{(\sim j)}_{\ell-1} \hbeta_j
            + \sum_{j > \ell} \lambda_j  \hDelta^{(\sim j)}_{\ell} \hbeta_j \right)
            }_{\eqqcolon(*)}.
    	&&\because \sum_{j=1}^p \lambda_j = 1 \label{eq:guido.1}
\end{align}
Observe that $\hbDelta^{(\sim j)} = (\bX_{\star, \{j\}^c})^{\dagger} \bX_{\star, j}$, cf. Proposition \ref{prop:beta_hat}. 
We define $\tbDelta^{(\sim j)} \in \RR^p$ to be the zero-padded version of $\hbDelta^{(\sim j)}$, akin to how $\hbbeta^{(\sim j)}$ is defined in \eqref{eq:ovb.special.append}, such that 
\begin{equation} \label{eq:tilde_Delta}
	\tDelta^{(\sim j)}_\ell 
        = \begin{cases}
		  \hDelta^{(\sim j)}_\ell, & \text{if } \ell < j,\\
            0, & \text{if } \ell = j,\\
    		\hDelta^{(\sim j)}_{\ell-1}, & \text{if } \ell > j. 
	\end{cases} 
\end{equation}
It follows that $\tbDelta^{(\sim j)} = (\tbX_{(\sim j)})^{\dagger} \bX_{\star, j}$ where $\tbX_{(\sim j)} \in \RR^{n \times p}$ denotes the matrix $\bX$ with the $j$-th column replaced by $0$, i.e., a ``zero-padding'' of $\bX_{\star, \{j\}^c} \in \RR^{n \times (p-1)}$.
% By Assumption~\ref{assump:partial}, we have 
% %
% \begin{align}
% 	\hDelta^{(\sim j)}_\ell &= \bX_{\star, \ell}^\top \cdot \left( \bX_{\star, \{j\}^c} \bX_{\star, \{j\}^c}^\top \right)^{-1} \cdot \bX_{\star, j}. 
% 	\label{eq:delta.sim.j} 
% \end{align}
% %
Recall the notation $\bG_{\bX} \coloneqq ( \bX \bX^{\top} )^{-1}$. 
Applying the Sherman–Morrison–Woodbury formula, we have
\begin{align}
    \left[\tbX_{(\sim j)} \tbX_{(\sim j)}^{\top} \right]^{-1}
        &= \left( \bX \bX^{\top} - \bX_{\star, j} \bX_{\star, j}^{\top} \right)^{-1}\\
        &= \bG_{\bX} + \frac{1}{1-h_{jj}} \bG_{\bX} \cdot \bX_{\star, j}\bX_{\star, j}^{\top} \cdot \bG_{\bX}.\label{eq:smw}
\end{align}
Then, it follows from \eqref{eq:smw} that
\begin{align}
    \tDelta^{(\sim j)}_{\ell} 
        &= \be_{\ell}^{\top} \tbX_{(\sim j)}^{\top} \left[\tbX_{(\sim j)} \tbX_{(\sim j)}^{\top} \right]^{-1}  \bX_{\star, j}\\
        &= (\bX_{\star, \ell})^{\top}
            \left( \bG_{\bX} + \frac{1}{1-h_{jj}} \bG_{\bX} \cdot \bX_{\star, j}\bX_{\star, j}^{\top} \cdot \bG_{\bX} \right)
            \bX_{\star, j}\\
        &= h_{\ell j} + \frac{h_{\ell j} h_{jj} }{1 - h_{jj}}\\
        &= \frac{ h_{\ell j}}{ 1 - h_{jj}}.
            \label{eq:Delta.simplify.1}
\end{align}

Armed with \eqref{eq:tilde_Delta} and \eqref{eq:Delta.simplify.1}, we continue to simplify the expression (*) in \eqref{eq:guido.1} as
\begin{align}
    \left( \sum_{j < \ell} \lambda_j  \hDelta^{(\sim j)}_{\ell-1} \hbeta_j
            + \sum_{j > \ell} \lambda_j  \hDelta^{(\sim j)}_{\ell} \hbeta_j \right)
        &= \sum_{j \neq \ell} \lambda_j  \tDelta^{(\sim j)}_{\ell} \hbeta_j\\
        &= \sum_{j \neq \ell} \frac{1 - h_{jj}}{p - n} \cdot \frac{h_{\ell j}}{1 - h_{jj}} \cdot \bX_{\star, j}^\top \cdot \bG_{\bX} \cdot  \by
    	\\
    	&= \frac{1}{p-n} \cdot \left( \sum_{j \neq \ell} h_{\ell j} \bX_{\star, j}^\top \right) \cdot \bG_{\bX} \cdot \by
    	\\
    	&= \frac{1}{p-n} \cdot \left( \sum_{j \neq \ell} \bX_{\star, \ell}^\top \bG_{\bX} \bX_{\star, j} \bX_{\star, j}^\top \right) \cdot \bG_{\bX} \cdot \by
    	\\
    	&= \frac{1}{p-n} \cdot  \bX_{\star, \ell}^\top \cdot \bG_{\bX} \cdot \left( \sum_{j \neq \ell} \bX_{\star, j} \bX_{\star, j}^\top \right) \cdot \bG_{\bX} \cdot \by
    	\\
    	&= \frac{1}{p-n} \cdot \bX_{\star, \ell}^\top \cdot \bG_{\bX} \cdot \left( \bX \bX^\top - \bX_{\star, \ell} \bX_{\star, \ell}^\top \right) \cdot \bG_{\bX} \cdot \by
    	\\
    	&= \frac{1}{p-n} \cdot \left( \bX_{\star, \ell}^\top \cdot \bG_{\bX} \cdot \by - \bX_{\star, \ell}^\top \cdot \bG_{\bX} \cdot \bX_{\star, \ell} \bX_{\star, \ell}^\top \cdot \bG_{\bX} \cdot \by \right)
    	\\
    	&= \frac{1}{p-n} \cdot \left( \hbeta_\ell - h_{\ell \ell} \cdot \hbeta_\ell \right) 
    	\\
    	&= \hbeta_\ell \cdot \left( \frac{1 - h_{\ell \ell}}{p-n} \right)
    	\\
	&= \hbeta_\ell \cdot \lambda_\ell. \label{eq:guido.2}
\end{align}
Combining \eqref{eq:guido.1} and \eqref{eq:guido.2}, we obtain our desired result. 
\end{proof} 

\section{Deferred proofs from Section~\ref{sec:stat_inf}} \label{sec:proofs.stats} 

%\subsection{Proof of Proposition~\ref{prop:linear_model}}
%\begin{proof} %[Proof of Proposition~\ref{prop:linear_model} ]
%%
%%Recall from \eqref{eq:ols.pseudo} 
%Recall that $\hbbeta = \bX^{\dagger} \by$ and thus, $\hbbeta = \bX^{\dagger} \big( \bX \bbeta + \bvarepsilon \big) = \bbeta^* + \bX^{\dagger} \bvarepsilon $. 
%%
% Then it follows from Assumption~\ref{assump:lm} that $\bbE\big[ \hbbeta \big] = \bbeta^*$ and $\nCov(\hbbeta) = \bX^{\dagger} \cdot \bbE[ \bvarepsilon \bvarepsilon^\top ] \cdot(\bX^{\dagger})^\top = \bX^{\dagger} \cdot \bSigma \cdot (\bX^{\dagger})^\top$.
%%
%\end{proof} 
%
%\subsection{Proof of Proposition~\ref{prop:identif_pred}}
%\begin{proof} %[Proof of Proposition \ref{prop:identif_pred}]
%    To prove (a), observe that $\bX^{\dagger} \bX = \bI_p$ when Assumption~\ref{assump:column_rank} holds ($\rank(\bX) = p$) and $\bX \bX^{\dagger} = \bI_n$ when Assumption~\ref{assump:row_rank} holds ($\rank(\bX) = n$). 
%    Due to the linearity of expectation, $\bbE[ \bX \tbbeta ] = \bX \bbE[ \tbbeta ] = \bX \bbeta^* = \bX \bX^{\dagger} \bX \bbeta = \bX \bbeta$. 
%
%    To establish (b), suppose that $\bbE[ \bX \tbbeta ] = \bX \bbeta$. 
%    Then, we have $\bbE[\tbbeta] = \bX^{\dagger}\bX \bbE[\tbbeta] = \bX^{\dagger} \bbE[ \bX\tbbeta ] = \bX^{\dagger} \bX \bbeta = \bbeta^*$. 
%\end{proof}

\subsection{Proof of Theorem \ref{thm:Gauss_Markov}}

\begin{proof} %[Proof of Theorem \ref{thm:Gauss_Markov}]
	We provide a proof of Theorem~\ref{thm:Gauss_Markov.general}, which is a generalization of Theorem~\ref{thm:Gauss_Markov}.

    \medskip \noindent
    \emph{Proof of Part (a): Classical $(n>p)$.}
    We first prove the classical Gauss-Markov theorem in our notation. 
    If Assumption~\ref{assump:column_rank} holds, then there exists a left inverse $\bM$ of $\bX$ such that $\bM \bX = \bI_p$. 
    Observe that the set of left inverses of $\bX$ can be written as $\cS_{\uL} = \{ \bM \in \RR^{p \times n}: \bM = \bX^{\dagger} + \bN ~\text{with}~ \bN \bX = \bzero  \}$. 
    The set of unbiased linear estimators of $\bbeta$ is given as $\{ \tbbeta = \bM \by: \bM \in \cS_{\uL} \}$ as 
    \[
        \bbE[ \bM \by ] = \bbE[ \bM \bX \bbeta + \bM \bvarepsilon ] = \bM \bX \bbeta.
    \]
    Furthermore, for any deterministic matrix $\bM \in \RR^{p \times n}$, 
    \begin{align}\label{eqn:covar_tbbeta.0}
        \Cov( \bM \by )
            = \Cov (\bM \bvarepsilon)
            = \bM \cdot \Cov(\bvarepsilon) \cdot \bM^{\top} = \sigma^2 \cdot \bM \bM^{\top}
    \end{align}
    where the last equality follows from the homoskedasticity assumption $\Cov(\varepsilon) = \sigma^2 \bI_n$. 
    Therefore, for any $\bM = \bX^{\dagger} + \bN \in \cS_{\uL}$,
    \begin{align}
        \Cov( \bM \by )
            &= \sigma^2 \cdot \big( \bX^{\dagger} + \bN \big) \big( \bX^{\dagger} + \bN \big)^{\top}\\
            &= \sigma^2 \cdot \left( \bX^{\dagger}{\bX^{\dagger}}^{\top} + \bX^{\dagger} \bN^{\top} + \bN {\bX^{\dagger}}^{\top} + \bN \bN^{\top} \right)\\
            &= \sigma^2 \cdot \left( \bX^{\dagger}{\bX^{\dagger}}^{\top} + \bN \bN^{\top} \right)
                \quad \because \colsp(\bX) = \rowsp(\bX^{\dagger}) \implies \bN {\bX^{\dagger}}^{\top} = \bzero\\
            &= \Cov( \bX^{\dagger} \by ) + \sigma^2 \bN \bN^{\top}.
    \end{align}
    Since $\bN \bN^{\top}$ is positive semidefinite, $\Cov( \bX^{\dagger} \by ) \preceq \Cov( \bM \by )$ for all $\bM \in \cS_{\uL}$.

    \medskip
    \noindent
    \emph{Proof of Part (b): High-dimensional $(n \leq p)$.}
    
    \begin{itemize}
        \item
        \textit{Step 0 (Unbiased-through-$X$ $\iff$ right inverse).}
        Since $\bbE[\bX\tbbeta]=\bbE[\bX\bL\by]=\bX\bL\bX\bbeta$, the condition $\bbE[\bX\tbbeta]=\bX\bbeta$ for all $\bbeta$ is equivalent to  $\bX\bL\bX=\bX$. 
        Under Assumption~\ref{assump:row_rank}, $\colsp(\bX)=\RR^n$ and $\bX\bX^\dagger=\bI_n$, hence, $\bX\bL\bX=\bX$ implies $\bX\bL=\bX\bX^\dagger=\bI_n$. 
        Thus
        \[
            \bbE[\bX\tbbeta]=\bX\bbeta\ \ \text{for all }\bbeta
                \quad\Longleftrightarrow\quad
                \bX\bL=\bI_n.
        \]
        In particular, every such $\bL$ can be written as
        \[
            \bL=\bX^\dagger+\bN,\qquad \bX\bN=\bzero,
        \]
        i.e., $\bN$ has columns in $\ker(\bX)=\rowsp(\bX)^\perp$ (right-inverses)

        \item 
        \textit{Step 1 (Directional variances on $\rowsp(\bX)$).}
        For any deterministic matrix $\bA$, $\Cov(\bA\by)=\bA\bSigma\bA^\top$.
        Fix any $\bv\in\rowsp(\bX)$; then $\bv=\bX^\top\bc$ for some $\bc\in\RR^n$. Using $\bX\bX^\dagger=\bI_n$ and $\bX\bL=\bI_n$,
        \begin{align*}
            \bv^\top \Cov(\hbbeta)\,\bv
                &=\bc^\top\,\bX\,(\bX^\dagger\bSigma\bX^{\dagger,\top})\,\bX^\top\bc
                    =\bc^\top\,(\bX\bX^\dagger)\,\bSigma\,(\bX\bX^\dagger)^\top\bc
                    =\bc^\top\bSigma\,\bc,\\
            \bv^\top \Cov(\tbbeta)\,\bv
                &=\bc^\top\,\bX\,(\bL\bSigma\bL^\top)\,\bX^\top\bc
                    =\bc^\top\,(\bX\bL)\,\bSigma\,(\bX\bL)^\top\bc
                    =\bc^\top\bSigma\,\bc.
        \end{align*}
        Hence for every $\bv=\bX^\top\bc\in\rowsp(\bX)$,
        \[
            \bc^\top\bSigma\bc \;=\; \bv^\top\Cov(\hbbeta)\,\bv \;=\; \bv^\top\Cov(\tbbeta)\,\bv,
        \]
        proving part (b)-(i).

        \item 
        \textit{Step 2 (Directional variances on $\rowsp(\bX)^\perp$ and the equality condition).}
        Let $\bw\in\rowsp(\bX)^\perp$. Since $\bX\bw=\bzero$, we have $\bX^{\dagger,\top}\bw=(\bX\bX^\top)^{-1}\bX\bw=\bzero$, so
        \[
            \bw^\top\Cov(\hbbeta)\,\bw
                =\bw^\top(\bX^\dagger\bSigma\bX^{\dagger,\top})\bw
                =(\bX^{\dagger,\top}\bw)^\top\bSigma\,(\bX^{\dagger,\top}\bw)=0.
        \]
        For $\tbbeta=\bL\by=(\bX^\dagger+\bN)\by$ with $\bX\bN=\bzero$,
        \begin{align*}
            \bw^\top\Cov(\tbbeta)\,\bw
                &=\bw^\top(\bL\bSigma\bL^\top)\bw
                \\ &=\underbrace{\bw^\top(\bX^\dagger\bSigma\bX^{\dagger,\top})\bw}_{=\,0}
                    + \underbrace{\bw^\top(\bN\bSigma\bX^{\dagger,\top})\bw}_{=\,0}
                    + \underbrace{\bw^\top(\bX^\dagger\bSigma\bN^{\top})\bw}_{=\,0}
                    +\ \bw^\top\big(\bN\bSigma\bN^\top\big)\bw\\
                &=\big\|\bSigma^{1/2}\bN^\top\bw\big\|_2^2\ \ge\ 0,
        \end{align*}
        which proves $\bw^\top\Cov(\hbbeta)\,\bw \le \bw^\top\Cov(\tbbeta)\,\bw$ for every $\bw\in\rowsp(\bX)^\perp$.
        
        It remains to characterize equality for all such directions. Since $\bN^\top$ vanishes on $\rowsp(\bX)$ (because $\bN^\top\bX^\top=(\bX\bN)^\top=\bzero$),
        its image is generated by $\rowsp(\bX)^\perp$:
        \[
            \colsp(\bN^\top)=\bN^\top\!\big(\rowsp(\bX)^\perp\big).
        \]
        Thus $\bw^\top\Cov(\tbbeta)\,\bw=\|\bSigma^{1/2}\bN^\top\bw\|_2^2=0$ for all $\bw\in\rowsp(\bX)^\perp$
        if and only if $\colsp(\bN^\top)\subseteq\ker(\bSigma)$, i.e.,
        \[
            \rowsp(\bN)\subseteq\ker(\bSigma).
        \]
        Recalling $\bN=\bL-\bX^\dagger$, this is precisely $\rowsp(\bL-\bX^\dagger)\subseteq\ker(\bSigma)$, proving part (b)-(ii). % and completes the proof.
    \end{itemize}
\end{proof}

%% Proposition 1
%\subsection{Proof of Proposition~\ref{prop:linear_model} }
%
%\begin{proof} [Proof of Proposition~\ref{prop:linear_model} ]
%%
%Recall from \eqref{eq:ols.pseudo} that $\hbbeta = \bX^{\dagger} \by$ and thus, $\hbbeta = \bX^{\dagger} \big( \bX \bbeta + \bvarepsilon \big) = \bbeta^* + \bX^{\dagger} \bvarepsilon $. 
%%
% Then it follows from Assumption~\ref{assump:lm} that $\bbE\big[ \hbbeta \big] = \bbeta^*$ and $\nCov(\hbbeta) = \bX^{\dagger} \cdot \bbE[ \bvarepsilon \bvarepsilon^\top ] \cdot(\bX^{\dagger})^\top = \bX^{\dagger} \cdot \bSigma \cdot (\bX^{\dagger})^\top$.
%%
%\end{proof} 

% Proposition 2
\subsection{Proof of Theorem~\ref{thm:var.estimator}} 

\begin{proof} %[Proof of Theorem~\ref{thm:var.estimator}]
If Assumption \ref{assump:lm} holds with $\bSigma = \sigma^2 \bI$, then for any deterministic matrix $\bM \in \RR^{n \times n}$
\begin{align}
	\bbE[ \by^\top \bM \by] 
        &= \bbE\left[ \left(\bX \bbeta^* + \bvarepsilon \big)^\top \bM \big(\bX \bbeta^* + \bvarepsilon \right) \right]\\
        &= (\bbeta^*)^\top \bX^\top \bM \bX \bbeta^* + \bbE \left[ \bvarepsilon^\top \bM \bvarepsilon  \right]\\
        &= (\bbeta^*)^\top \bX^\top \bM \bX \bbeta^* + \sigma^2 \cdot \tr(\bM),   \label{eqn:exp_quadratic}
\end{align}
where the last equality follows from 
\[
    \bbE [ \bvarepsilon^\top \bM \bvarepsilon  ] = \bbE [ \tr( \bvarepsilon^\top \bM \bvarepsilon) ] = \bbE [ \tr( \bM \bvarepsilon \bvarepsilon^\top ) ] = \tr( \bM \cdot \bbE [  \bvarepsilon \bvarepsilon^\top ] ) = \sigma^2 \cdot \tr(\bM).
\]
We now present the proof for each data regime. 
\begin{enumerate} %[label=(\alph*)]
\item \emph{Classical $(n > p)$}: 
Recall from Corollary~\ref{cor:loo.shortcut} that the LOO residuals in the classical regime obey $\tbvarepsilon = \big[ \diag(\bP_{\bX}^\perp)\big]^{-1} \cdot \bP_{\bX}^\perp \by$. 
As such, we obtain %can apply \eqref{eqn:exp_quadratic} with $\bM = \bP_{\bX}^\perp \cdot \diag^{-2}(\bP_{\bX}^\perp) \cdot \bP_{\bX}^\perp$ to obtain 
\begin{align}
	\bbE[ \tbvarepsilon^\top \tbvarepsilon] 
        &= \bbE[ \by^\top \bP_{\bX}^\perp \cdot \big[ \diag(\bP_{\bX}^\perp)\big]^{-1} \cdot \big[ \diag(\bP_{\bX}^\perp)\big]^{-1} \cdot \bP_{\bX}^\perp \by ]\\
        &= \sigma^2 \cdot \tr \left( \bP_{\bX}^\perp \cdot \big[ \diag(\bP_{\bX}^\perp)\big]^{-1} \cdot \big[ \diag(\bP_{\bX}^\perp)\big]^{-1} \cdot \bP_{\bX}^\perp \right)
            &&\because \eqref{eqn:exp_quadratic}~\&~ \bP_{\bX}^\perp \bX = \bzero\\
        &= \sigma^2 \cdot \left\| \big[ \diag(\bP_{\bX}^\perp)\big]^{-1} \cdot \bP_{\bX}^\perp \right\|_{\F}^2
\end{align}
because $\tr( \bM^{\top} \bM ) = \| \bM \|_{\F}^2$.
Hence, 
\begin{align}
	\bbE\left[ \frac{ \left\| \tbvarepsilon \right\|_2^2 }{ \big\| \big[ \diag(\bP_{\bX}^\perp)\big]^{-1} \cdot \bP_{\bX}^\perp \big\|_{\F}^2 } \right] &= \sigma^2. 
\end{align} 

\item \emph{High-dimensional $(p \ge n)$}:  
Next, recall from Corollary~\ref{cor:loo.shortcut} that the LOO residuals in high-dimensions obey $\tbvarepsilon = \big[ \diag(\bG_{\bX}) \big]^{-1} \cdot \bG_{\bX} \by$.
Similarly as above, we apply \eqref{eqn:exp_quadratic} to obtain   
\begin{align}
	\bbE[ \tbvarepsilon^\top \tbvarepsilon] 
        &= \bbE[\by^\top \bG_{\bX} \cdot \big[ \diag(\bG_{\bX}) \big]^{-1} \cdot \big[ \diag(\bG_{\bX}) \big]^{-1} \cdot \bG_{\bX} \by]  \\
        &= (\bbeta^*)^\top \bX^\top \left( \bG_{\bX} \cdot \big[ \diag(\bG_{\bX}) \big]^{-1} \cdot \big[ \diag(\bG_{\bX}) \big]^{-1} \cdot \bG_{\bX} \right ) \bX \bbeta^*\\
            &\qquad + \sigma^2 \cdot \tr \left( \bG_{\bX} \cdot \big[ \diag(\bG_{\bX}) \big]^{-1} \cdot \big[ \diag(\bG_{\bX}) \big]^{-1} \cdot \bG_{\bX} \right)\\
        &= \big\| \bbE[\tbvarepsilon] \big\|_2^2 + \sigma^2 \cdot \big\| \big[ \diag(\bG_{\bX}) \big]^{-1} \cdot \bG_{\bX} \big\|_{\F}^2,
\end{align}
where the last equality follows from $\bbE[\tbvarepsilon] = \big[ \diag(\bG_{\bX}) \big]^{-1} \cdot \bG_{\bX} \bX \bbeta^*$. 
Hence, 
\begin{align}
	\bbE\left[ \frac{ \left\| \tbvarepsilon \right\|_2^2 }{ \big\| \big[ \diag(\bG_{\bX}) \big]^{-1} \cdot \bG_{\bX} \big\|_{\F}^2 } \right] 
        &= \sigma^2 + \frac{ \big\| \bbE[\tbvarepsilon] \big\|_2^2}{ \big\| \big[ \diag(\bG_{\bX}) \big]^{-1} \cdot \bG_{\bX} \big\|_{\F}^2 }. 
\end{align} 
\end{enumerate} 
This completes the proof. 
\end{proof}

\end{document}